\newcommand{\N}{\mathbb{N}}
\newcommand{\R}{\mathbb{R}}
\newcommand{\rd}{\mathbb{R}^d}
\newcommand{\Lip}{\textup{Lip}}
\newcommand{\vertiii}[1]{{\left\vert\kern-0.25ex\left\vert\kern-0.25ex\left\vert #1 \right\vert\kern-0.25ex\right\vert\kern-0.25ex\right\vert}}
\newtheorem{thm}{Theorem}[section]
\newtheorem{corollary}[thm]{Corollary}
\newtheorem{lemma}[thm]{Lemma}
\newtheorem{proposition}[thm]{Proposition}
\theoremstyle{remark}
\newtheorem{remark}[thm]{Remark}
\newtheorem{example}[thm]{Example}
\theoremstyle{definition}
\newtheorem{definition}[thm]{Definition}
\definecolor{cadmiumgreen}{rgb}{0.0, 0.42, 0.24}
\newcommand{\LL}{\mathcal{L}}
\newenvironment{description*}%
  {\begin{description}
    \setlength{\itemsep}{0.33em}
   \setlength{\mathindent}{1.5\leftmargin}
  }
  {\end{description}}
\let\orgdescriptionlabel\descriptionlabel
\renewcommand*{\descriptionlabel}[1]{%
  \let\orglabel\label
  \let\label\@gobble
  \phantomsection
  \edef\@currentlabel{#1}%
  \let\label\orglabel
  \orgdescriptionlabel{#1}%
}
\begin{document}

\title[Fractional Mean Field Games]{On fractional and nonlocal
  parabolic mean field games in the whole space}









\author[O. Ersland]{Olav Ersland}
\address{Department of Mathematical Sciences\\ Norwegian University of
  Science and Technology NTNU, Norway}
\curraddr{}
\email{olav.ersland@ntnu.no}

\author[E. R. Jakobsen]{Espen R. Jakobsen}
\address{Department of Mathematical Sciences\\ Norwegian University of
  Science and Technology NTNU, Norway}
\curraddr{}
\email{espen.jakobsen@ntnu.no}

\thanks{Both authors were supported by the Toppforsk (research
  excellence) project Waves and Nonlinear Phenomena (WaNP), grant
  no. 250070 from the Research Council of Norway.}

\keywords{Mean Field Games, fractional diffusion, nonlocal PDE,
  fractional PDE, anomalous diffusion, regularity, existence,
  uniqueness, compactness, classical solutions, local coupling,
  nonlocal coupling.} 

\subjclass[2020]{
  35Q89,  
  47G20,  
  35A01,
  35A09,  
  35Q84,  
  49L12,  
  45K05,  
  35S10,  
  35K61,
  35K08}  

\begin{abstract}
 We study Mean Field Games (MFGs) driven by a large class of
nonlocal, fractional and anomalous diffusions in the whole space.
  These non-Gaussian diffusions are pure jump 
  L\'evy processes with some 
  $\sigma$-stable like behaviour. Included are $\sigma$-stable
  processes and fractional Laplace diffusion operators
  $(-\Delta)^{\frac{\sigma}2}$, tempered nonsymmetric processes 
   in Finance, spectrally one-sided processes,
   and sums of subelliptic operators of different orders.
  Our main results are existence and uniqueness 
  of classical solutions of MFG systems with nondegenerate diffusion
  operators of order $\sigma\in(1,2)$.
  We consider parabolic equations in the whole space with both
  local and nonlocal couplings. 
  Our proofs uses pure PDE-methods and build on ideas of Lions et
al.  The new ingredients are 
fractional heat kernel estimates, regularity results for fractional
Bellman, Fokker-Planck  and coupled Mean Field Game
equations, and a priori bounds and compactness of (very) weak
solutions of fractional Fokker-Planck equations in the whole
space. Our techniques requires no moment assumptions and uses a weaker
topology than Wasserstein. 
\end{abstract}

\maketitle
\tableofcontents

\section{Introduction}
  We study parabolic Mean Field Games (MFGs) driven by a large class of
  nonlocal, fractional and anomalous diffusions  in the whole space:
\begin{align}
  \begin{cases}
    \begin{aligned}
        -\partial_t u - \mathcal{L} u + H \left( x,u,Du \right) &= F \left( x, m \left( t \right) \right) && \text{ in } \left( 0,T \right) \times \R^d, \\[0.2cm]
        \partial_t m - \mathcal{L}^{*} m - div \left( m D_p H \left( x,u, Du  \right) \right) &= 0 && \text{ in } \left( 0,T \right) \times \R^d, \\[0.2cm]
      m \left( 0,x \right) = m_0 (x),\qquad u \left( x,T \right)& = G \left( x, m \left( T \right) \right),
      \end{aligned}
  \end{cases}
  \label{introMFGsystem}
\end{align}
where $H$ is a (nonlinear) Hamiltonian, 
$F$ and $G$ are source term and terminal condition, and $m_0$ 
an initial condition. 
Furthermore, $\mathcal{L}$ and its adjoint
$\mathcal{L}^{*}$, are non-degenerate fractional diffusion operators of order
$\sigma \in (1,2)$ of the form  
\begin{align}
    \mathcal{L} u (x) = \int_{\R^d} u (x+z) - u(x) - Du (x) \cdot z 1_{|z| <1}\ d \mu (z),
    \label{Levy_operator}
\end{align}
where  $\mu$ is a nonnegative Radon measure satisfying the L\'evy-condition
$\int_{\R^d} 1 \wedge |z|^2 \,d\mu \left( z \right) < \infty$, see
\ref{L1} and \ref{L2} below for precise assumptions.
The system is uniformly parabolic and consists of a
backward in time fractional Hamilton-Jacobi-Bellman (HJB) equation
coupled with a forward in time fractional Fokker-Planck (FP) equation.

\subsection*{Background}

MFGs is an emerging field of mathematics with a
wide and increasing range of applications in e.g. economy, network engineering,
biology, crowd and swarm control, and statistical learning
\cite{GLL10,GNP15}. It was introduced more or less at the same time by
Lasry and Lions \cite{LL06,lasry2007mean} and Caines, Huang and Malhamé
\cite{huang2006large}. Today there is a large and rapidly expanding
literature addressing a range of mathematical questions concerning
MFGs. We refer to the books and lecture notes
\cite{ACDPS20,CDI18,CDLL19,GPV16,BFY13} and references therein for an
overview of the theory and the current state of the art. 
Heuristically
a large number of
  identical players want to minimize some cost 
  depending on their own state and the distribution of the states of the
  other players, and the mean field game system arise as a characterisation of
  Nash equilibria when the number of players tends to infinity under
  certain symmetry assumptions. The optimal MFG feedback control is
  almost optimal also for finite player games with moderate to large
  numbers of players, and often provides the only practical way of
  solving also such games.

  In this paper the generic 
  player controls a stochastic
  differential equation (SDE) driven by  a
  pure jump Lévy process $L_t$ with characteristic triplet $(0,0,\mu)$
  \cite{applebaum2009levy},
\begin{align}\label{SDE}
      dX_t = \alpha_t \,dt + dL_t, 
\end{align}
with the aim of minimizing the cost functional
\begin{align*}
    \mathbb{E} \left[ \int_0^T \Big[L( X_s, \alpha_s ) + F ( X_s, m ( s ) ) \Big]ds + G \left( X_T, m \left( T \right) \right) \right]
\end{align*}
with respect to the control $\alpha_s$.
Here $L$ is the Legendre transform of $H$ with respect to the second
variable, $F$ and $G$ are running and terminal costs, and $m$ the 
distribution of the states of the other players.
If $u$ is the value function of the generic player, then formally the optimal
feedback control is $\alpha^*_t=-D_2H(x,Du)$ and $u$ satisfies the
HJB equation in \eqref{introMFGsystem}. The probability distribution
of the optimally controlled process $X_t^*$ then satisfies the FP
equation in \eqref{introMFGsystem}. Since the players are identical,
the distribution $m$ of all players will satisfy the same FP equation,
now starting from the initial distribution of
players $m_0$. This is a heuristic explanation for \eqref{introMFGsystem}. 

What differs from the standard MFG formulation is the type of noise
used in the model. In many real world applications, jump
processes or anomalous diffusions will better model the observed noise
than Gaussian processes \cite{MK00,CTbook,Wo01,applebaum2009levy}. One
example is symmetric $\sigma$-stable noise which correspond to fractional
Laplacian operators $\LL=(-\Delta)^{\frac{\sigma}2}$ for
$\sigma\in(0,2)$. In Finance the observed jump processes are not 
symmetric and $\sigma$-stable but rather 
non-symmetric and tempered.
An example is the one-dimensional CGMY process
\cite{CTbook} where $\frac{d\mu}{dz}(z)=\frac
C{|z|^{1+Y}}e^{-Gz^+-Mz^-}$ for $C,G,M>0$ and $Y\in(0,2)$.
Our assumptions cover a large class of uniformly elliptic operators
$\mathcal L$ that includes fractional Laplacians, generators of processes
used in Finance, anisotropic operators with different orders $\sigma$
in different directions, Riesz-Feller operators, and operators with
L\'evy measures that non-absolutley continuous, spectrally one-sided,
have no fractional moments, and a general behaviour at infinity.
We refer to Section
\ref{sec:fracHK} for a discussion, results, and examples. We also
analyse the system in the whole space, while many other papers focus on the
compact torus. For control problems and games, the whole space case is
usually more natural, but also more technical.

\medskip
\noindent{\bf Main results.}  Under structure and regularity assumptions
  on $\LL,H,F,G,m_0$, we show:
  \smallskip
  \begin{itemize}
    \item[(i)] {\em Existence of smooth solutions of
      \eqref{introMFGsystem}} with nonlocal and local coupling, see Theorems
      \ref{existence_theorem} and \ref{local_theorem}.
      \smallskip
    \item[(ii)] {\em Uniqueness of smooth solutions of
      \eqref{introMFGsystem}} with nonlocal and local coupling, see
      Theorems \ref{theorem:uniqueness} and \ref{local_uniqueness}.
  \end{itemize}
\smallskip

Our assumptions on $H,F,G$ are fairly standard
\cite{lionsCCDF,cardaliaguet2013mfg,ACDPS20} (except maybe that the
problem is posed on the whole space). For the existence results, we
note that the Hamiltonian $H$ (assumptions \ref{A3}--\ref{A5}) can be
both nonconvex and noncoercive. Since we consider nondegenerate
parabolic problems, the order of the equations have to be greater than one
and we do not need or impose semiconcavity assumptions.
 The proofs of the main results follow from an adaptation of the
 PDE-approach of Lions \cite{lionsCCDF,cardaliaguet2013mfg,ACDPS20},
 and existence is much more involved than uniqueness. Existence for
 MFGs with nonlocal coupling is proved using a Schauder fixed point argument and
   well-posedness, regularity, stability and compactness results for
    individual fractional HJB and fractional FP equations of the form:
\begin{align*}
    &\partial_t u - \mathcal{L} u + H \left( x,u, Du \right) = f \left(
    t,x \right), \\[0.2cm]
     & \partial_t m - \mathcal{L}^* m + div \left( b(t,x)m \right) =
      0. 
\end{align*}
Existence for MFGs with local coupling follows from an approximation
argument,  the results for nonlocal coupling, and regularity and
compactness results, in this case directly for the coupled MFG system.
\pagebreak

\noindent{\bf Secondary results:}
\smallskip
    \begin{itemize}
    \item[(iii)] {\em Fractional heat kernel estimates}, see Theorem
      \ref{L_heat_kernel_estimate} and Proposition \ref{kernel_estimates}. 
      \smallskip
    \item[(iv)] {\em Fractional HJB equations:} Regularity, existence,
      and space-time compactness of derivatives of classical solutions
      in Theorem \ref{thm_HJ} and Theorem \ref{HJ_Holder_thm}.
      \smallskip
    \item[(v)] {\em Fractional FP equations:} Well-posedness, 
      space-time compactness of derivatives, $C(0,T;P(\R^d))$ compactness, and
      global $L^\infty$ bounds of smooth solutions in Theorem
      \ref{fokker_planck_holder} (a), Theorem 
      \ref{fokker_planck_holder} (b) and (c), Proposition 
      \ref{prop:L1}, and Lemma \ref{L-infinity_estimate}.
  \end{itemize}
    \smallskip
    
  For both equations we show new high order regularity
  results of independent interest. These results are obtained from
  a Banach fixed point argument using semigroup/Duhamel
  representation of the solutions and bootstrapping in the spirit of
  \cite{droniou2003global,droniou2006fractal,imbert2005non}. Key  
  ingredients are very general fractional heat kernel estimates and
  global in time Lipschitz bounds for $u$ and $L^\infty$ bounds for
  $m$. The heat kernel estimates are based on
  \cite{grzywny2017estimates}, and we give some extensions, e.g. to operators
  with   general L\'evy measures at infinity and sums of subelliptic
  operators. To show space-time compactness of derivatives, we prove that they
  are space-time equi-continuous, combining uniform H\"older estimates
  in space with new time and mixed regularity estimates for the Duhamel
  representations of the solutions (see Section \ref{sec:fracHJB}).
  In the local coupling case, the HJB and FP equations have less
  regular data, and regularity can no longer be obtained through
  separate treatment of the equations. Instead we need to work directly
  on the coupled MFG system and apply a more refined bootstrapping
  argument based on fractional derivatives. These estimates also
  require better global in time Lipschitz and $L^\infty$ estimates
  the HJB and FP equations respectively. Here we use a variant of the
  Lipschitz bound of \cite{barles2012lipschitz} and provide a new
  $L^\infty$-estimate for the FP equation.
    
  For the Schauder fixed point argument to work and give existence for
  the MFG system, compactness in measure is needed 
  for a family of solutions of the FP-equation. We prove such
  compactness essentially through an analysis of very weak solutions
  of this equation: We prove preservation of positivity, mass, and
  $L^1$-norms, equicontinuity in time, and tightness. Our proof of
  equicontinuity is simple and direct, without probabilistic
  SDE-arguements as in
  e.g. \cite{cardaliaguet2013mfg,ACDPS20}. The tightness estimates
  are new in the fractional MFG setting and more challenging than in the
  local case. 

  This paper is the first to consider fractional MFGs in the whole
  space. To have compactness in measure on non-compact domains, a new
  ingredient is needed: tightness. Typically tightness is obtained
  through some moment condition on the familiy of measures. Such moment
  bounds depend both on the initial distribution and the generator of
  the process. In the local case when $L_t$ in \eqref{SDE} is a
  Brownian motion, then the process $X_t$ and FP solution $m$ have
  moments of any order, only limited by the number of moments  of
  $X_0$ and $m_0$. In the nonlocal/fractional case, $X_t$ and $m$ may
  have only limited (as for $\sigma$-stable processes) or even no
  fractional power moments at all, even when  $X_0$ and 
  $m_0$ have moments of all orders. We refer to Section \ref{sec:moment}
  for more examples, details, and discussion. Nonetheless it turns out
  that some generalized moment exists, and tightness and compactness
  can then be obtained. This relies on Proposition 
  \ref{prop:tail-control-function} (taken from
  \cite{Espen-Indra-Milosz-2020}), which gives the existence of a nice 
  ``Lyapunov'' function that can be integrated against $m_{0}$ and
  $\mu 1_{|z| \geq 1}$.

  {\em In this paper we prove tightness and compactness
  without any explicit moment conditions on the underlying processes $X_t$ or
  solutions of the FP equations $m$.} This seems to be new for MFGs
  even in the classical local case. Furthermore, $m$ is typically set in the
  Wasserstein-1 space $\mathcal W_1$ of measures with first moments,
  and compactness then requires more than one moment to be uniformly
  bounded.  Since our L\'evy processes and FP solutions may not have
  first moments, 
  we can not work in this setting. Rather we work in a weaker
  setting using a weaker Rubinstein-Kantorovich metric $d_0$
  (defined below) which is equivalent to weak convergence in measure
  (without moments). This is reflected both in the compactness and
  stability arguments we use as well as our assumptions on the
  nonlocal couplings.

  \subsection*{Literature}
In the case of Gaussian noise and local MGF systems, this type of
MFG problems with nonlocal or local coupling have been studied from
the start \cite{LL06,lasry2007mean,lionsCCDF,cardaliaguet2013mfg} and
today there is an extensive literature summarized e.g. in
\cite{ACDPS20,GPV16,BFY13} and references therein. For local MFGs
with local couplings, there are also results on weak solutions
\cite{lasry2007mean,P15,cardaliaguet2015second,ACDPS20}, a topic we do not
consider in this paper. Duhamel formulas have been used e.g. to prove
short-time existence and uniqueness in \cite{cirant2019time}. 

In the case of non-Gaussian noise and nonlocal MFGs or MFGs with
fractional diffusions, there is already some work. In
\cite{cesaroni2017stationary} the authors analyze a stationary MFG
system on the torus with fractional Laplace diffusions and both non-local 
and local couplings. 
Well-posedness of time-fractional MFG systems, i.e. systems with
fractional time-derivatives, are studied in
\cite{camilli2019time}. Fractional parabolic Bertrand
and Carnout MFGs are studied in the recent paper 
\cite{FIN2020}. These problems are posed in one space dimension, they
have a different and more complicated structure than ours, and the
principal terms are the (local) second derivative terms. The nonlocal
terms act as lower order perturbations. 
Moreover, during the rather long preparation of this paper we learned
that M. Cirant and A. Goffi were working on somewhat similar problems. Their
results have now been published in \cite{cirant2019existence}. 
They consider time-depending MFG systems on the torus with fractional
Laplace diffusions and nonlocal couplings. Since they assume
additional convexity and coercivity assumptions to ensure global in
time semiconcavity and Lipcshitz bounds on solutions, they consider
also fractional Laplacians of the full range of orders
$\sigma\in(0,2)$. Regularity results are given in terms of Bessel
potential and H\"older spaces, 
weak energy solutions are employed when $\sigma\in(0,1]$, and
  existence is obtained from the vanishing viscosity method.
Our setup is different in many ways, and more general in some (a large
class of diffusion operators, less smoothness on the data, problems
posed in the whole space, no moment conditions, fixed point
arguments), and most of our proofs and arguments are quite different from
those in \cite{cirant2019existence}. We also give results for local
couplings, which in view of the discussion above is a non-trivial extension.

\subsection*{Outline of paper}
This paper is organized as follows: In section \ref{sec:prelim} we
introduce notation, spaces, and give some preliminary assumptions and
results for the nonlocal operators. We state assumptions and give
existence and uniqueness results for MFG systems with nonlocal and
local coupling in Section \ref{sec:main}.  To prove these results, we
first establish 
fractional heat kernel estimates in Section \ref{sec:fracHK}.
Using these estimates and Duhamel representation formulas, we prove
regularity results for fractional Hamilton-Jacobi equations in Section
\ref{sec:fracHJB}.  
In Section \ref{sec:fracFP} we  establish results for fractional
Fokker-Planck equations, both regularity of classical solutions and
$C([0,T],P(\R^d))$ compactness.
In Sections \ref{sec:pf} and \ref{sec:loc} we prove the existence
result for nonlocal and local couplings respectively, while
uniqueness for nonlocal couplings is proved in Appendix
\ref{sec:pf_uniq}. Finally we prove a technical space-time regularity
lemma in Appendix \ref{pf-DH-lem}. 

\section{Preliminaries}\label{sec:prelim}

\subsection{Notation and spaces} By $C, K$ we mean various constants which may
change from line to line. The Euclidean norm on any
$\mathbb{R}^d$-type space is denoted by $|\cdot|$. For any subset $Q\subset
\mathbb{R}^N$ and for any bounded, possibly vector valued,
function on $Q$, we define the $L^{\infty}$ norms by
$\|w\|_{L^{\infty}(Q)} := \textup{ess sup}_{y\in Q} |w(y)|$. Whenever $Q= \rd$ or $Q=[0,T]\times \rd$, we denote $\|\cdot\|_{L^{\infty}(Q)} := \|\cdot\|_\infty$. Similarly, the norm in $L^p$ space is denoted by $\|\cdot\|_{L^{p}(Q)}$ or simply $\|\cdot\|_p$.
 We use $C_b(Q)$ and $UC (Q)$ to denote the
spaces of bounded continuous and uniformly continuous real valued
functions on $Q$, often we denote the norm $\|\cdot\|_{C_b}$ simply by
$\|\cdot\|_{\infty}$. Furthermore, $C_{b}^{k} ( \R^d )$ or
$C_{b}^{l,m} ( ( 0,T )\times \R^d )$ are subspaces of $C_b$ with $k$ bounded derivatives
or $m$ bounded space and $l$ bounded time derivatives.

By $P(\rd)$ we denote the set of Borel probability measure on
$\rd$. The Kantorovich-Rubinstein  distance $d_0(\mu_1,\mu_2)$ on the
space $P(\rd)$ is defined as  
\begin{align}\label{d0}
  d_0(\mu_1,\mu_2) := \sup_{f\in \Lip_{1,1}(\rd)}\Big\{\int_{\rd}f(x)
  d(\mu_1-\mu_2)(x)\Big\},
  \end{align}
where $\Lip_{1,1}(\rd) = \Big\{f : f \, \mbox{is Lipschitz continuous and} \, \|f\|_\infty, \|Df\|_\infty\leq 1 \Big\}$. 
Convergence in $d_0$ is equivalent to weak convergence of
measures (convergence in $(C_b)^*$), and hence tight subsets of
$(\mathbf{P},d_0)$ are precompact by Prokhorov's theorem.
We let the space $C([0,T];P(\R^d))$ be the set of $P(\R^d)$-valued
functions on $[0,T]$. It is a metric space with the metric $\sup_{t\in
  [0,T]}d_0(\mu(t),\nu(t))$, and tight equicontinuous subsets are
precompact by the Arzela-Ascoli and Prokhov theorems.

\subsection{Nonlocal operators}
Under the L\'evy condition
\begin{description}
    \item[(L1)\label{L1}] $\mu\geq 0$ is a Radon measure
      satisfying $\int_{\R^d} 1 \wedge |z|^2 \,d\mu \left( z \right) <
      \infty$, 
\end{description}
\smallskip
the operators $\LL$ defined in \eqref{Levy_operator} are in one to one
correspondence with the generators of pure jump L\'evy processes
\cite{applebaum2009levy}. One example is the symmetric $\sigma$-stable
processes and the fractional Laplacians,
$$-(-\Delta)^{\frac \sigma 2}\phi(x)=\int_{\R^d} \Big[\phi(x+z)-\phi(x)
-z\cdot D\phi(x) 1_{|z|<1}\Big] \frac{c_{d,\sigma}
  dz}{|z|^{d+\sigma}},\quad \sigma\in(0,2).$$
They are well-defined pointwise e.g. on
functions in $C_b\cap C^2$ by Taylor's theorem and Fubini:  
\begin{align*}
 |\LL \phi(x)| \leq \frac 12 \|D^2\phi\|_{C_b(B(x,1))} \int_{|z|<1}|z|^2d\mu(z)
 + 2\|\phi\|_{C_b}\int_{|z|\geq 1}d\mu(z)
  \quad \text{for}\quad x\in\R^d.
  \end{align*}
Let $\sigma\in[1,2)$. With more precise upper bounds on the integrals
  of $\mu$ near the origin: 
\begin{align}\label{u-bnd}
 \text{There is $c>0$ such that}\quad r^\sigma\int_{|z|<1} \frac{|z|^2}{r^2}\wedge 1\,
  d\mu(z)\leq c
  \quad \text{for}  \ r\in(0,1), 
  \end{align}
or equivalently,
  $r^{-2+\sigma}\int_{|z|<r} |z|^2 d\mu(z) + r^{-1+\sigma}\int_{r<|z|<1}
  |z| d\mu(z) + r^{\sigma}\int_{r<|z|<1} d\mu(z)\leq c$
for $r\in(0,1)$, we can have interpolation estimates for the operators
$\mathcal{L}$ in $L^p$.
\begin{lemma}    \label{L_p_bounds_eq}
  ($L^p$-bounds). Assume \ref{L1}, \eqref{u-bnd} with $\sigma\in[1,2)$, and $u\in C_b^2$. Then for all $p \in \left[ 1,\infty
    \right]$, and $r \in (0,1]$,
\begin{align}
    \| \mathcal{L} u \|_{L^{p} (\R^d)} \leq C \Big( \|D^2 u \|_{L^p} r^{2-\sigma} + \|Du\|_{L^p} \Gamma (\sigma,r) + \|u\|_{L^p}\mu(B_1^c) \Big)
\end{align}
where
\begin{align*}
    \Gamma (\sigma,r) = \begin{cases}
        |\ln r|, & \sigma  =1, \\
        r^{1-\sigma}-1, & 1< \sigma <2.
    \end{cases}
\end{align*}
    \label{L_p_bounds}
\end{lemma}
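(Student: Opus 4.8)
The plan is to split the Lévy operator $\mathcal{L}u$ into three pieces according to the size of the jump variable $z$, matching the three regimes in the upper bound \eqref{u-bnd}: the near-origin part $|z| < r$, the intermediate part $r \le |z| < 1$, and the tail $|z| \ge 1$. On the first piece I would Taylor-expand $u(x+z) - u(x) - Du(x)\cdot z$ to second order, bounding it pointwise by $\frac12 |z|^2 \sup_{|\xi|\le 1}|D^2 u(x+\xi z)|$, so that $\left|\int_{|z|<r}[u(x+z)-u(x)-Du(x)\cdot z]\,d\mu(z)\right| \le \frac12 \int_{|z|<r}|z|^2 \sup_{|\xi|\le 1}|D^2u(x+\xi z)|\,d\mu(z)$. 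Taking $L^p$ norms in $x$ and using Minkowski's integral inequality (to pull the $L^p$ norm inside the $d\mu$ integral), the translation-invariance of the Lebesgue measure gives $\|\sup_\xi |D^2u(\cdot+\xi z)|\|_{L^p}$, which for the $L^\infty$ endpoint is trivially $\|D^2u\|_{L^\infty}$ and for $p < \infty$ can be controlled by $\|D^2 u\|_{L^p}$ after absorbing the sup (e.g. via $\sup_{|\xi|\le1}|D^2u(x+\xi z)| \le |D^2u(x)| + \int_0^1 |D^3u(x+s\xi z)\cdot \xi z|\,ds$ — but to avoid a third derivative one instead just notes $|z|<r\le 1$ and bounds crudely, or uses that $u\in C_b^2$ and the measure is finite on $\{r \le |z| \le 1\}$ is not available here — so the cleanest route is the $L^\infty$-in-$\xi$ / $L^p$-in-$x$ estimate $\|\,\sup_{|\xi|\le 1}|D^2u(\cdot+\xi z)|\,\|_{L^p}\le\|D^2u\|_{L^p}$ when $p=\infty$ and a standard maximal-function-free argument that for the purposes of the stated constant $C$ one simply writes the bound as $\|D^2u\|_{L^p}$, exactly as in the statement). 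The resulting factor is $\int_{|z|<r}|z|^2\,d\mu(z) \le c\, r^{2-\sigma}$ by the first term of the equivalent form of \eqref{u-bnd}, giving the $\|D^2u\|_{L^p} r^{2-\sigma}$ term.

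For the intermediate region $r \le |z| < 1$ I would expand only to first order: $|u(x+z)-u(x)-Du(x)\cdot z| \le |u(x+z)-u(x)| + |Du(x)||z| \le 2|z|\sup_{|\xi|\le1}|Du(x+\xi z)|$ via the mean value theorem on the first difference together with $|z|\le 1$. Again taking $L^p$ norms and Minkowski, the factor becomes $2\|Du\|_{L^p}\int_{r\le|z|<1}|z|\,d\mu(z)$. By the second and third terms of the equivalent form of \eqref{u-bnd}, $\int_{r<|z|<1}|z|\,d\mu(z)$ is dominated by a constant times $\int_r^1 s^{-\sigma}\,ds$-type behaviour: splitting dyadically or integrating the layer bounds $\int_{2^{-k-1}<|z|<2^{-k}}|z|\,d\mu(z) \lesssim 2^{-k(1-\sigma)}$-type estimates, one obtains exactly $\Gamma(\sigma,r) = |\ln r|$ for $\sigma = 1$ and $\Gamma(\sigma,r)=r^{1-\sigma}-1$ for $1<\sigma<2$. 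This is the step where the two cases $\sigma=1$ and $\sigma>1$ genuinely diverge, and it is the most delicate bookkeeping, though still elementary: the logarithm appears precisely as the borderline of the geometric series $\sum_k 2^{-k(1-\sigma)}$.

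Finally, for the tail $|z| \ge 1$ there is no cancellation and no drift correction term (since $1_{|z|<1}$ vanishes), so $\left|\int_{|z|\ge1}[u(x+z)-u(x)]\,d\mu(z)\right| \le 2\|u\|_{L^\infty(B(x,\cdot))}\mu(B_1^c)$ pointwise, and after taking $L^p$ norms and Minkowski we get $2\|u\|_{L^p}\mu(B_1^c)$ (using translation invariance of Lebesgue measure and that $\mu(B_1^c) < \infty$ by \ref{L1}). Summing the three contributions and collecting constants yields the claimed inequality. The main obstacle, such as it is, is handling the $L^p$ norm of $\sup_{|\xi|\le1}|D^2u(\cdot+\xi z)|$ and of $\sup_{|\xi|\le1}|Du(\cdot+\xi z)|$ for finite $p$: the honest fix is to observe that since $u\in C_b^2$ the relevant quantities are finite and that one may replace the supremum over $\xi$ by an average plus a small error using an additional integration, or — more simply and in keeping with how such lemmas are typically stated — one absorbs this into the constant $C$, noting that the endpoint $p=\infty$ is immediate and the $1\le p<\infty$ case follows identically with $\|D^2u(\cdot+\xi z)\|_{L^p}=\|D^2u\|_{L^p}$ once one writes the second-order remainder in the integral (Taylor-with-integral-remainder) form $\int_0^1(1-s)D^2u(x+sz)z\cdot z\,ds$ rather than with a pointwise sup, which makes the Minkowski step exact with no sup at all. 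I would in fact use this integral-remainder form throughout both the $|z|<r$ and $r\le|z|<1$ estimates, which removes the only real wrinkle.
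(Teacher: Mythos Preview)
Your proposal is correct and follows essentially the same three-way split ($|z|<r$, $r\le|z|<1$, $|z|\ge 1$) as the paper, using Taylor expansion, Minkowski's integral inequality, and translation invariance of Lebesgue measure. Your final remark about using the integral remainder form $\int_0^1(1-s)D^2u(x+sz)z\cdot z\,ds$ (and analogously for the first-order piece) is exactly the right way to make the Minkowski step clean for finite $p$, and is implicitly what the paper's one-line computations for $\|L_1\|_{L^p}$ and $\|L_2\|_{L^p}$ rely on; the paper simply skips the self-questioning and writes down the resulting bounds directly, and for the $\Gamma(\sigma,r)$ factor it invokes the equivalent form of \eqref{u-bnd} rather than a dyadic sum.
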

\begin{proof}
    For $ p \in  [ 1, \infty )$ we split $\mathcal{L} u$
    into three parts,
   $L_1 = \int_{B_r} u(x+z) - u(x) - Du (x) \cdot z \,d\mu (z)$, 
        $L_2 = \int_{B_1 \setminus B_r} u(x+z) - u(x)- Du (x) \cdot z \,d\mu (z)$,
    and $L_3 = \int_{\R^d \setminus B_1} u(x+z) - u(x) \,d\mu (z)$.
    Using Taylor expansions, Minkowski's integral inequality, and
    \eqref{u-bnd},
\begin{align*}
    \|L_1 \|_{L^p (\R^d)} &\leq \bigg( \int_{\R^d} |D^2 u (x)|^p \,dx
    \bigg)^{1/p} \int_{B_r} |z|^2 \,d\mu(z) 
    \leq C \|D^2 u\|_{L^p (\R^d)} r^{2-\sigma},\\
    \|L_2 \|_{L^p (\R^d)} &\leq 2\bigg( \int_{\R^d} |Du (x) |^p
    \,dx\bigg)^{1/p} \int_{B_1 \setminus B_r} |z|\,d\mu(z) 
    \leq C \|Du\|_{L^p (\R^d)} \Gamma (\sigma,r),\\
    \|L_3 \|_{L^p (\R^d)} &\leq 2 \bigg( \int_{\R^d} |u (x) |^p \,dx
    \bigg)^{1/p} \bigg(\int_{\R^d
      \setminus B_1} \bigg) \,d\mu (z)
    \leq 2 \|u\|_{L^p (\R^d) }\mu(B_1^c).
\end{align*}
Summing these estimates we obtain \eqref{L_p_bounds_eq}.  The case $p = \infty$ is similar, so we omit it. 
\end{proof}
Similar estimates are given e.g. in Section 2.5 in
\cite{GM:Book}. Note that assumption \eqref{u-bnd} holds for
$-(-\Delta)^{\beta/2}$ for any $\beta\in (0,\sigma]\setminus\{1\}$ and
    is related to the order of $\LL$.

\begin{remark}\label{fractional_laplace_estimate}
(a) When $\mu$ is symmetric, $\int_{B_1 \setminus B_r} Du (x) \cdot z
\,d\mu (z)=0$, $$\|L_2\|_{L^p}\leq
2\|u\|_p\int_{r<|z|<1}d\mu(z)\leq C\|u\|_pr^{-\sigma},$$
and
$\| \mathcal{L} u \|_{L^{p} (\R^d)} \leq C \big( \|D^2 u \|_{L^p}
r^{2-\sigma} + \|u\|_{L^p}r^{-\sigma} \big).$ Minimizing w.r.t. $r$
then yields
 \begin{align*}
        \| \mathcal L u \|_{L^p } \leq C\| D^2 u \|_p^{\sigma/2} \|u\|_p^{1- \sigma/2}.
    \end{align*}
This results holds for the fractional Laplacian
$\mathcal L=(-\Delta)^{\sigma/2}$ when $\sigma\in(1,2)$.
\medskip

\noindent (b)  When $\sigma\in(0,1)$, a similar argument shows that
$$ \|
\mathcal{L} u \|_{L^{p}} \leq C \big(\|Du\|_{L^p} r^{1-\sigma}
+ \|u\|_{L^p} r^{-\sigma} \big),$$
and we find that $\|
(-\Delta)^{\sigma/2} u \|_{L^p (\R^d)} \leq C\| D u \|_p^{\sigma}
\|u\|_p^{1- \sigma}$ for $\sigma\in(0,1)$.
\end{remark}

We define the adjoint of $\LL$ in the usual way.

 \begin{definition}
    (Adjoint). The adjoint of $\mathcal L$ is the operator
  $\mathcal{L}^*$ such that 
    \begin{align*}
        \langle \mathcal{L} f, g \rangle_{L^2 (\R^d) } = \langle  f,
        \mathcal{L}^* g \rangle_{L^2 (\R^d) }\qquad \text{for
          all}\qquad f,g \in C^2_c (\R^d).
    \end{align*}
\end{definition}
The $\mathcal L^*$ operator has the same form
as $\mathcal L$, with the ``antipodal'' L\'evy measure $\mu^*$:
\begin{lemma}    \label{adjoint_operator}
Assume \ref{L1} holds.  The adjoint operator $\mathcal{L}^*$ is given by
    \begin{align*}
        \mathcal{L}^* u (x) = \int_{\R^d} u (x+z) - u(x) - Du (x) \cdot z 1_{|z| <1}\ d \mu^* (z),
    \end{align*}
    where $\mu^* (B) = \mu (-B)$ for all Borel sets $B \subset \R^d$.
\end{lemma}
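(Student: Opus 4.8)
The plan is to compute $\langle \LL f,g\rangle_{L^2(\R^d)}$ directly for arbitrary $f,g\in C^2_c(\R^d)$ from the definition \eqref{Levy_operator}, transferring every operation from $f$ onto $g$ by Fubini's theorem and integration by parts, and then recognising the result as $\langle f,\LL^* g\rangle_{L^2(\R^d)}$ with $\LL^*$ of the claimed form and $\mu^*$ the reflection of $\mu$. Since $L^2$-adjoints are unique, this suffices.

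First I would justify Fubini's theorem for
\[
\langle \LL f,g\rangle_{L^2(\R^d)} = \int_{\R^d} g(x)\int_{\R^d}\big[f(x+z)-f(x)-Df(x)\cdot z\,1_{|z|<1}\big]\,d\mu(z)\,dx .
\]
The full integrand need not be absolutely integrable, so I split the $z$-integral at $|z|=1$. On $\{|z|\ge 1\}$, Cauchy--Schwarz gives $\int_{\R^d}|g(x)|\big(|f(x+z)|+|f(x)|\big)\,dx\le 2\|f\|_{L^2}\|g\|_{L^2}$, and since $\mu(B_1^c)<\infty$ by \ref{L1} the double integral over this region converges absolutely. On $\{|z|<1\}$, Taylor's theorem yields $|f(x+z)-f(x)-Df(x)\cdot z|\le \tfrac12|z|^2\|D^2f\|_{\infty}$, so the $x$-integral of $|g(x)|$ times this quantity is at most $\tfrac12\|D^2f\|_\infty\|g\|_{L^1}|z|^2$, which is finite because $f,g\in C^2_c$, and $\int_{|z|<1}|z|^2\,d\mu(z)<\infty$ closes the argument. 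Hence we may integrate in $x$ first, for each fixed $z$.

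Next, for fixed $z$ I would evaluate $\int_{\R^d} g(x)\big[f(x+z)-f(x)-Df(x)\cdot z\,1_{|z|<1}\big]\,dx$: the substitution $x\mapsto x+z$ in the first term and integration by parts in the third (no boundary terms, by compact support) move these operations onto $g$ and give $\int_{\R^d} f(y)\big[g(y-z)-g(y)+Dg(y)\cdot z\,1_{|z|<1}\big]\,dy$. Substituting back into the double integral and then changing variables $z\mapsto -z$ in the outer $\mu$-integral — which, since $\int h\,d\mu=\int h(-\,\cdot\,)\,d\mu^*$ and $1_{|-z|<1}=1_{|z|<1}$, turns it into $\int_{\R^d}\int_{\R^d} f(y)\big[g(y+z)-g(y)-Dg(y)\cdot z\,1_{|z|<1}\big]\,d\mu^*(z)\,dy$. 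The measure $\mu^*$ is again a nonnegative Radon measure satisfying \ref{L1}, so repeating the Fubini argument of the previous step (with the roles of $f,g$ and of $\mu,\mu^*$ interchanged) lets me pull the $dy$-integral outside, and the inner integral is exactly $\LL^* g(y)$ for $\LL^*$ as in the statement. Therefore $\langle \LL f,g\rangle_{L^2(\R^d)}=\langle f,\LL^* g\rangle_{L^2(\R^d)}$ for all $f,g\in C^2_c(\R^d)$, and by uniqueness of the adjoint the claimed formula follows.

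The only real difficulty is the bookkeeping needed to invoke Fubini legitimately: the L\'evy integrand is merely conditionally integrable, so one must split near and far from the singularity $z=0$, using the second-order Taylor remainder together with the compact supports of $f,g$ near the origin and an $L^2$-pairing far away. The remaining ingredients — the translation $x\mapsto x+z$, the integration by parts, and the reflection $z\mapsto -z$ — are elementary, and it is the reflection that produces the antipodal measure $\mu^*$.
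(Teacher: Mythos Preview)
Your proof is correct. The paper does not actually give a proof of this lemma; it simply remarks that the result is classical and refers to Section~2.4 of Garroni--Menaldi \cite{GM:Book}. Your argument supplies precisely the standard details behind that reference: the Fubini justification via the near/far splitting of the L\'evy integral (using the second-order Taylor remainder on $\{|z|<1\}$ and the finiteness of $\mu(B_1^c)$ on $\{|z|\ge 1\}$), followed by the translation $x\mapsto x+z$, the integration by parts on the gradient term, and the reflection $z\mapsto -z$ producing $\mu^*$. There is nothing to add.
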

 This result is classical (see e.g. 
 Section 2.4 in \cite{GM:Book}). {\em Hence all assumptions and
 results in this paper for $\mu$ and $\LL$ automatically also holds
 for $\mu^*$ and  $\LL^*$ (and vice versa).}

 \subsection{Moments of L\'evy-measures, processes and FP
   equations}\label{sec:moment} 

Consider the solution $X_t$ of the SDE \eqref{SDE} (e.g. with
$X_0=x\in\R^d$) and the corresponding FP equation for its probability
distribution $m$, $m_t+ \text{div}(\alpha m) - \LL^*m=0$. If
$\alpha\in L^\infty$ and \ref{L1} 
holds, then it follows  that $X_t$ (and $m$) has $s>0$ moments if
and only if $\mu 1_{|z|>1}$ has $s$ moments \cite{applebaum2009levy}:
$$E|X_t|^s= \int_{\R^d} |x|^s m(dx,t)<\infty \quad \Longleftrightarrow\quad
\int_{|z|>1} |z|^sd\mu(z)<\infty.$$
The symmetric $\sigma$-stable processes have finite $s$-moments for
any $s\in(0,\sigma)$. It is well-known that smoothing properties of $\LL$ only
depend on the (moment) properties of $\mu 1_{|z|<1}$, and hence is
completely independent of the number of moments of $\mu 1_{|z|>1}$,
$X_t$ and $m(t)$. This fact is reflected in the elliticity assumption \ref{L2'}
in the next section, and follows e.g. from simple heat kernel
considerations in section \ref{sec:fracHK}, see Remark 
\ref{rem:smoothing}.

\medskip

\noindent{\em In this paper we will be as general as possible and assume no explicit
moment assumptions on $\mu 1_{|z|>1}$, $X_t$, and $m(t)$. The only condition
we impose on $\mu 1_{|z|>1}$ is \ref{L1}.}
\medskip

Note however, that we will still always have some sort of generalized
moments, but maybe not of power type, and these  ``moments'' will be
important for tightness and compactness for the FP equations. We refer
to section \ref{sec:fracFP} and Proposition
\ref{prop:tail-control-function} for more details.

\section{Existence and uniqueness for fractional MFG systems}
\label{sec:main}

Here we state our assumptions and the existence and uniqueness results
for classical solutions of the system \eqref{introMFGsystem} both with nonlocal
 and local couplings.

\subsection{Assumptions on the fractional operator $\mathcal L$ in (\ref{Levy_operator})}\label{assumpL}
We assume \ref{L1} and
\smallskip
\begin{description}
      \item[(L2')\label{L2'}] (Uniform ellipticity) There are constants $\sigma \in (1,2)$ and $C >0$ such that
        \begin{align*}
            \frac{1}{C} \frac{1}{|z|^{d+\sigma}} \leq \frac{d\mu}{dz} \leq C \frac{1}{|z|^{d+\sigma}}\quad \text{for}\quad |z| \leq 1.
        \end{align*}
\end{description}
        \smallskip

These assumptions are satisfied by generators $\mathcal L$ of
pure jump processes whose infinite activity part is close
to $\alpha$-stable. But scale invariance is not required
nor any restrictions on the tail of $\mu$ except for \ref{L1}. Some
examples are $\alpha$-stable processes, tempered 
$\alpha$-stable processes, and the nonsymmertic CGMY process in Finance
\cite{CTbook,applebaum2009levy}. Note that the
upper bound on $\frac{d\mu}{dz}$ implies that \eqref{u-bnd} holds.
A much more general condition than \ref{L2'} is:  
\smallskip
\begin{description}
    \item[(L2)\label{L2}]  There is $\sigma \in ( 1,2 )$, such that \smallskip 

       \noindent  (i) $\mu$ satisfies the upper bound \eqref{u-bnd}.\medskip

      \noindent  (ii) There is $\mathcal K
>0$ such that the heat kernels $K_\sigma$ and $K_\sigma^*$ of $\mathcal
L$ and $\mathcal L^*$ satisfy for $K=K_\sigma,K_\sigma^*$\ :
$K\geq0$, $\|K(t,\cdot)\|_{L^1(\R^d)}=1$, and 
        \begin{align*}
            \|D^{\beta} K
  (t,\cdot) \|_{L^p (\R^d)} \leq \mathcal K t^{-\frac{1}{\sigma}\big(|\beta|+(1-\frac1p)d\big)}\quad
            \text{for $t\in(0,T)$}
        \end{align*}
and any $p\in[1,\infty)$ and multi-index $\beta\in \N_0^{d}$ where $D$ is the
gradient in $\R^d$.
\end{description}
\smallskip
The heat kernel is a transition probability/fundamental
solution. Under \ref{L2} L\'evy measures need not be 
absolutely continuous, e.g. 
$\mathcal L=-\Big(\!-\frac{\partial^2}{\partial
  x_1^2}\Big)^{\sigma_1/2}-\dots-\Big(\!-\frac{\partial^2}{\partial 
      x_d^2}\Big)^{\sigma_d/2}$ for $\sigma_1,\dots,\sigma_d\in(1,2)$
satisfies \ref{L2} with $\sigma=\min_i\sigma_i$
 and $d\mu(z)=\sum_{i=1}^d\frac{dz_i}{|z_i|^{1+\sigma_i}}\Pi_{j\neq
  i}\delta_0(dz_j)$.  
See Section \ref{sec:fracHK} for precise definitions, a
proof that \ref{L2'} implies \ref{L2}, more examples and extensions.        

In the local coupling case, we need in addtion to \ref{L2} also the
following assumption:
\smallskip
\begin{description}
    \item[(L3)\label{L3}] 
Let the cone $\mathcal{C}_{\eta,r}(a) : = \{z \in B_r: (1 -
\eta)|z||a| \leq |\langle a,z \rangle| \}$. There is $\beta \in (0,2)$ such that for every $a \in \R^d$ there exist  $0 < \eta < 1$ and $C_\nu>0$, and for all $r >0$,
$$\int_{\mathcal{C}_{\eta,r}(a)}|z|^2 \nu(dz) \geq C_\nu \eta^{\frac{d-1}{2}}r^{2-\beta}.$$
\end{description}
This assumption is introduced in \cite{barles2012lipschitz} to prove
Lipschitz bounds for fractional HJB equations. It holds e.g. for fractional
Laplacians \cite[Example 1]{barles2012lipschitz} and then also if the
inqualities of \ref{L2'} holds for all $z\in\R^d$. Since the
assumption is in integral form, it also holds for
non-absolutely continuous L\'evy measures, spectrally one-sided
processes, sums of operators etc.

\medskip
\subsection{Fractional MFGs with nonlocal coupling}
We consider the MFG system
\smallskip
\begin{align}
  \begin{split}
    \left\{ \begin{array}{ll}
        -\partial_t u - \mathcal{L} u + H \left( x,u,Du \right) = F \left( x, m \left( t \right) \right) & \text{in}\quad \left( 0,T \right) \times \R^d, \\[0.2cm]
      \partial_t m - \mathcal{L}^* m - div \left( m D_p H \left( x,u, Du  \right) \right) = 0 & \text{in}\quad \left( 0,T \right) \times \R^d, \\[0.2cm]
      m \left(x, 0 \right) = m_0(x) ,\qquad u \left( x,T \right) = G
      \left( x, m \left( T \right) \right) &\text{in}\quad \R^d,
    \end{array}
    \right.
  \end{split}
  \label{frac_MFG_system}
\end{align}
where the functions $F,G: \R^d \times P \left( \R^d \right) \to \R$ are
non-local coupling functions, and $H: \R^d \times \R \times \R^d \to \R$ is the
Hamiltonian. We impose fairly standard assumptions on the data and
nonlinearities \cite{lionsCCDF,cardaliaguet2013mfg,ACDPS20} (but note
we use the metric $d_0$ and not Wasserstein-1):
\medskip
\begin{description}
    \item[(A1)\label{A1}] There exists a $C_0 > 0$ such that for all $\left( x_1, m_1 \right), \left( x_2, m_2 \right) \in \R^d \times P \left( \R^d \right)$:
        \begin{align*}
 \qquad\qquad    | F ( x_1, m_1 ) - F ( x_2, m_2 ) | +| G ( x_1, m_1 )
 - G ( x_2, m_2 ) |&\leq C_0 (|x_1 - x_2 | + d_0 ( m_1, m_2)).
        \end{align*}
      \item[(A2)\label{A2}]  There exist constants $C_F,C_G > 0$, such that
        $$\sup_{m \in P \left( \R^d \right)} \| F \left( \cdot, m \right)\|_{C_b^{2} \left( \R^d \right) } \leq C_F\quad\text{and}\quad\sup_{m \in P \left( \R^d \right)}\| G \left( \cdot, m \right)\|_{W^{3,\infty} \left( \R^d \right)} \leq C_G.$$
  
   \item[(A3)\label{A3}] For every $R > 0$ there is $C_R >0$ such that for $x \in \R^d , u \in \left[ -R,R \right] , p \in B_R$, $\alpha \in \N_0^N$, $|\alpha| \leq 3$,
      \begin{align*}
        |D^{\alpha} H \left( x,u,p \right) | \leq C_R.
      \end{align*}

    \item[(A4)\label{A4}] For every $R > 0$ there is $C_R >0$ such that for $x,y \in \R^d, u \in \left[ -R,R \right], p \in \R^d$:
      \begin{align*}
        |H \left( x,u,p \right) - H \left( y,u,p \right)| \leq C_R \left( |p|+1 \right) |x-y|. 
      \end{align*}
    \item[(A5)\label{A5}] There exists $\gamma \in \R$ such that for all $x \in \R^d, u,v \in \R, u \leq v, p \in \R^d$,
      \begin{align*}
        H \left( x,v,p \right) - H \left( x,u,p \right) \geq \gamma \left( v-u \right).
      \end{align*}
  \item[(A6)\label{A6}]  
    $m_0 \in W^{2,\infty} \left( \R^d \right) \cap \mathbf{P} ( \R^d)$.
\end{description}

Note that convexity or coercivity of $H$ is not assumed at this point
and that we identify probability measures and their density functions
(see \ref{A6}). 
\begin{definition}
     (Classical solution) A classical solution of
  \eqref{frac_MFG_system} is a pair $(u,m)$ such that (i) $u,m\in
  C(\R^d\times[0,T])$, (ii) $m\in C([0,T]; P ( \R^{d} ))$, (iii) 
  $Du,D^2u, \mathcal L u,u_t, Dm, \mathcal L^* m, m_t\in
  C(\R^d\times(0,T))$, and (iv) $( u,m )$ solves
  \eqref{frac_MFG_system} at every point.
 \end{definition}
\begin{thm}
  (Existence of classical solutions) Assume \ref{L1},\ref{L2},
  \ref{A1}--\ref{A6}. Then there exists a classical solution $\left(
  u,m \right)$ of (\ref{frac_MFG_system}) such that $u \in C_{b}^{1,3} ( ( 0,T ) \times \R^d )$ and $m \in C_{b}^{1,2} ( ( 0,T ) \times \R^d ) \cap C( [0,T]; P ( \R^{d} ) )$.
  \label{existence_theorem}
\end{thm}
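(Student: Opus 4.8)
The plan is to produce $(u,m)$ as a fixed point of a Schauder map $\Phi$ on $C([0,T];P(\R^d))$, built by solving the two equations of \eqref{frac_MFG_system} in succession. Given $\mu\in C([0,T];P(\R^d))$, assumption \ref{A1} together with the equivalence of $d_0$ with weak convergence makes $t\mapsto F(\cdot,\mu(t))$ continuous in time, while \ref{A2} bounds $F(\cdot,\mu(t))$ in $C_b^2$ and $G(\cdot,\mu(T))$ in $W^{3,\infty}$ uniformly in $\mu$. I would then invoke the fractional HJB theory (Theorems \ref{thm_HJ} and \ref{HJ_Holder_thm}) to produce the unique classical solution $u=u[\mu]$ of the backward equation $-\partial_t u-\mathcal L u+H(x,u,Du)=F(x,\mu(t))$, $u(\cdot,T)=G(\cdot,\mu(T))$, with an a priori bound $\|u\|_{C_b^{1,3}((0,T)\times\R^d)}\leq R_0$ in which $R_0$ depends only on $C_F,C_G,H,\mathcal L,T,d$ and not on $\mu$; here \ref{A5} supplies the $L^\infty$ bound by comparison and \ref{A3}--\ref{A4} enter the derivative estimates. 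Since $\|u\|_\infty,\|Du\|_\infty\leq R_0$ uniformly, \ref{A3} shows that the feedback $b[\mu]:=-D_pH(x,u,Du)$ stays in a fixed bounded set of $C_b^{0,2}((0,T)\times\R^d)$, with the time regularity coming from Theorem \ref{HJ_Holder_thm}. Feeding $b[\mu]$ and $m_0$ (assumption \ref{A6}) into the fractional Fokker--Planck theory (Theorem \ref{fokker_planck_holder}, Proposition \ref{prop:L1}, Lemma \ref{L-infinity_estimate}) then yields the unique classical solution $m=m[\mu]\in C_b^{1,2}((0,T)\times\R^d)\cap C([0,T];P(\R^d))$ of $\partial_t m-\mathcal L^* m+\operatorname{div}(b[\mu]\,m)=0$, $m(\cdot,0)=m_0$. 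Set $\Phi(\mu):=m[\mu]$.

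It remains to check that $\Phi$ is a compact continuous self-map of a convex compact set. The Fokker--Planck estimates above are uniform in $\mu$ and give equicontinuity in time (with modulus from the heat-kernel bounds), uniform tightness, and uniform interior $C_b^{1,2}$ derivative bounds; the tightness is where Proposition \ref{prop:tail-control-function} enters, as it furnishes a ``Lyapunov'' function integrable against both $m_0$ and $\mu 1_{|z|\geq 1}$. Hence $\Phi\big(C([0,T];P(\R^d))\big)$ lies in a set $\mathcal K$ precompact in $C([0,T];P(\R^d))$ by Arzel\`a--Ascoli and Prokhorov. Passing to the closed convex hull of $\mathcal K$ --- still contained in $C([0,T];P(\R^d))$, since convex combinations and tight limits of probability measures are probability measures, and compact by Mazur's theorem --- I reduce to proving continuity of $\Phi$ there. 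If $\mu_n\to\mu$, then \ref{A1} gives $F(\cdot,\mu_n(\cdot))\to F(\cdot,\mu(\cdot))$ and $G(\cdot,\mu_n(T))\to G(\cdot,\mu(T))$ uniformly; the uniform $C_b^{1,3}$ bounds, the stability in Theorems \ref{thm_HJ}--\ref{HJ_Holder_thm}, and uniqueness of the HJB solution force $u[\mu_n]\to u[\mu]$ locally uniformly with derivatives, so $b[\mu_n]\to b[\mu]$ locally uniformly with uniform bounds; finally the stability/compactness in Theorem \ref{fokker_planck_holder} and uniqueness for the linear Fokker--Planck problem give $m[\mu_n]\to m[\mu]$ in $C([0,T];P(\R^d))$. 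Schauder's fixed point theorem then yields $m$ with $\Phi(m)=m$.

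Putting $u:=u[m]$, the pair $(u,m)$ solves \eqref{frac_MFG_system} at every point, and the regularity asserted in the statement --- $u\in C_b^{1,3}$, $m\in C_b^{1,2}\cap C([0,T];P(\R^d))$, together with the continuity of $Du,D^2u,\mathcal L u,\partial_t u,Dm,\mathcal L^* m,\partial_t m$ on $(0,T)\times\R^d$ --- is precisely what Theorems \ref{thm_HJ}, \ref{HJ_Holder_thm} and \ref{fokker_planck_holder} provide; $\mathcal L u$ and $\mathcal L^* m$ are well defined pointwise by the bound following \eqref{Levy_operator}, and the terminal and initial conditions hold by construction of $u[m]$ and $m[\mu]$.

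The step I expect to be the real obstacle is not any single construction but the coordination of the a priori estimates: every HJB bound must be made uniform in $\mu$ so that $R_0$ --- hence the size of the drift and the ellipticity-scale constants entering the Fokker--Planck estimates --- is fixed, and the Fokker--Planck estimates must be strong enough to deliver, simultaneously, interior parabolic regularity (for $m\in C_b^{1,2}$ and for continuity of $\Phi$ via compactness of derivatives) and tightness that is uniform in $\mu$, on the whole space and with \emph{no} moment hypothesis on $\mu 1_{|z|\geq 1}$. This last point is the genuinely new difficulty and the reason the whole argument is carried out in the weaker metric $d_0$ rather than in Wasserstein-$1$. Since all the required ingredients are established in Sections \ref{sec:fracHK}--\ref{sec:fracFP}, the argument above amounts essentially to their assembly.
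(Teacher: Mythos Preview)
Your proposal is correct and follows essentially the same route as the paper: a Schauder fixed point on $C([0,T];P(\R^d))$, with the HJB theory (Theorems \ref{thm_HJ}--\ref{HJ_Holder_thm}) giving $\mu$-uniform $C_b^{1,3}$ bounds on $u[\mu]$, the Fokker--Planck theory (Proposition \ref{fokker_planck_holder}, Proposition \ref{prop:L1}) giving $\mu$-uniform regularity, equicontinuity in $d_0$, and tightness via the Lyapunov function of Proposition \ref{prop:tail-control-function}, and continuity of the map established by compactness plus uniqueness. The only cosmetic difference is that the paper fixes the compact convex set explicitly as $\mathcal C=\{\mu:\sup_t\int\psi\,d\mu(t)\leq C_1,\ \sup_{s\neq t}d_0(\mu(s),\mu(t))/|s-t|^{1/\sigma}\leq C_2\}$ and checks $S(\mathcal C)\subset\mathcal C$, whereas you pass to the closed convex hull of the image and invoke Mazur; since $(C([0,T];P(\R^d)),d_0)$ is not a Banach space, Mazur's theorem does not literally apply, but your conclusion is still correct because both defining constraints of $\mathcal C$ are convex and closed in $\mu$, so the closed convex hull of $\Phi(C([0,T];P(\R^d)))$ is contained in $\mathcal C$ and hence compact by Prokhorov and Arzel\`a--Ascoli --- exactly the paper's argument.
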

 The proof will given in Section \ref{sec:pf}. It is an adaptation of
 the fixed point argument of P.-L. Lions
 \cite{lionsCCDF,cardaliaguet2013mfg,ACDPS20} and requires a series of
 a priori, regularity, and compactness estimates for fractional HJB and
fractional FP equations given in  Sections \ref{sec:fracHJB} and
\ref{sec:fracFP}.

For uniqueness, we add the following assumptions:
\begin{description}
    \item[(A7)\label{A7}] $F$ and $G$ satisfy monotonicity conditions: 
\begin{align*}
    \int_{\R^d} \left( F \left( x, m_1 \right) - F \left( x, m_2 \right) \right) d \left( m_1 -m_2 \right) \left( x \right) &\geq 0 \qquad \forall m_1,m_2 \in P ( \R^d ), \\[0.2cm]
  \int_{\R^d} \left( G \left( x, m_1 \right) - G \left( x, m_2 \right) \right) d \left( m_1 -m_2 \right) \left( x \right) &\geq 0 \qquad \forall m_1,m_2 \in P ( \R^d ).
\end{align*}
    \item[(A8)\label{A8}] The Hamiltonian $H = H \left( x,p \right)$ and is uniformly convex with respect to $p$:
\begin{align*}
  \exists C > 0, \ \ \frac{1}{C} I_d \leq D_{pp}^2 H \left( x,p \right) \leq C I_d.
\end{align*}
\end{description}

\begin{thm}
    Assume \ref{L1}, \ref{A1}-\ref{A8}. Then there is at most one classical solution of the MFG system (\ref{frac_MFG_system}).
  \label{theorem:uniqueness}
\end{thm}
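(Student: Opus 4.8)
The proof will follow the classical Lasry--Lions monotonicity argument, adapted to the nonlocal, whole-space setting. Let $(u_1,m_1)$ and $(u_2,m_2)$ be two classical solutions of \eqref{frac_MFG_system} and set $\bar u=u_1-u_2$, $\bar m=m_1-m_2$; recall that under \ref{A8} the Hamiltonian is $H=H(x,p)$. The starting point is the time evolution of the pairing $t\mapsto\int_{\R^d}\bar u(t)\,\bar m(t)\,dx$. Differentiating in $t$, inserting $\partial_t u_i=-\mathcal{L}u_i+H(x,Du_i)-F(x,m_i)$ from the HJB equation and $\partial_t m_i=\mathcal{L}^{*}m_i+\operatorname{div}\!\big(m_i D_pH(x,Du_i)\big)$ from the FP equation, and integrating the divergence term by parts in $x$, the decisive step is the cancellation of the nonlocal terms, $-\int(\mathcal{L}\bar u)\,\bar m+\int\bar u\,(\mathcal{L}^{*}\bar m)=0$, which is nothing but the definition of the adjoint $\mathcal{L}^{*}$ (extended from $C^2_c$ to the present setting). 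This leaves
\begin{align*}
  \frac{d}{dt}\int\bar u\,\bar m
  &=\int\big(H(x,Du_1)-H(x,Du_2)\big)\,\bar m-\int\big(F(x,m_1)-F(x,m_2)\big)\,\bar m\\
  &\quad-\int D\bar u\cdot\big(m_1 D_pH(x,Du_1)-m_2 D_pH(x,Du_2)\big).
\end{align*}

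Next I would integrate this identity over $t\in(0,T)$. The boundary term at $t=0$ vanishes since $\bar m(0)=m_0-m_0=0$, while at $t=T$,
\[
  \int_{\R^d}\bar u(T)\,\bar m(T)=\int_{\R^d}\big(G(x,m_1(T))-G(x,m_2(T))\big)\,d\big(m_1(T)-m_2(T)\big)\geq0
\]
by the monotonicity of $G$ in \ref{A7}. Using also the monotonicity of $F$ for the middle term and rearranging, we obtain
\begin{align*}
  \int_0^T\!\!\int_{\R^d}\Big\{&D\bar u\cdot\big(m_1 D_pH(x,Du_1)-m_2 D_pH(x,Du_2)\big)\\
  &-\big(H(x,Du_1)-H(x,Du_2)\big)\,\bar m\Big\}\,dx\,dt\leq0.
\end{align*}
Expanding $\bar u$ and $\bar m$ and regrouping the $m_1$- and $m_2$-terms, the integrand equals $m_1\,E(Du_2,Du_1)+m_2\,E(Du_1,Du_2)$, where $E(q,p):=H(x,q)-H(x,p)-D_pH(x,p)\cdot(q-p)$ (suppressing the common $x$); by the uniform convexity \ref{A8}, $E(q,p)\geq\frac{1}{2C}|q-p|^2$. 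Since $m_1,m_2\geq0$ (they are probability densities), this forces
\[
  \int_0^T\!\!\int_{\R^d}(m_1+m_2)\,|Du_1-Du_2|^2\,dx\,dt=0.
\]

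It remains to propagate this degeneracy. From $m_i\,|Du_1-Du_2|^2=0$ a.e.\ we get $m_i D_pH(x,Du_1)=m_i D_pH(x,Du_2)$ for $i=1,2$, so $m_1$ and $m_2$ are both classical solutions of the \emph{same} linear Fokker--Planck equation $\partial_t m-\mathcal{L}^{*}m-\operatorname{div}(mb)=0$ with the common drift $b(t,x)=D_pH(x,Du_2(t,x))$ and the common initial datum $m_0$; uniqueness for this equation (Theorem~\ref{fokker_planck_holder}) gives $m_1=m_2=:m$. Then $F(x,m_1(t))=F(x,m_2(t))$ and $G(x,m_1(T))=G(x,m_2(T))$, so $u_1$ and $u_2$ solve one and the same fractional HJB equation with identical source and terminal data, and uniqueness for that equation (Theorem~\ref{thm_HJ}) forces $u_1=u_2$, which completes the proof.

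The hard part will be not the algebra but the analytic bookkeeping on the unbounded domain $\R^d$: one must check that $t\mapsto\int\bar u\,\bar m$ is absolutely continuous with the derivative written above, that the spatial integrations by parts leave no boundary contribution at infinity, and---most delicately---that the adjoint identity $\int(\mathcal{L}\bar u)\,\bar m=\int\bar u\,(\mathcal{L}^{*}\bar m)$ holds for the regularity class of the solutions rather than merely for $C^2_c$ test functions. I expect this to be the most delicate point; it is handled by a truncation/mollification argument together with dominated convergence, leaning on the a priori regularity, boundedness and integrability of $u_i$, $m_i$ and their derivatives (in particular $m_i\in L^1$ with decay, $Du_i$ and $\mathcal{L}u_i$ bounded, the latter via Lemma~\ref{L_p_bounds}) established in Sections~\ref{sec:fracHJB} and~\ref{sec:fracFP}, and is the reason the argument is deferred to an appendix.
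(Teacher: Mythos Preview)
Your proof is correct and follows essentially the same Lasry--Lions monotonicity argument as the paper's own proof in Appendix~\ref{sec:pf_uniq}: compute $\frac{d}{dt}\int\bar u\,\bar m$, use the adjoint cancellation $\int(\mathcal{L}\bar u)\bar m=\int\bar u\,\mathcal{L}^*\bar m$, integrate in time, apply the monotonicity of $F$ and $G$ and the uniform convexity of $H$ to force $(m_1+m_2)|Du_1-Du_2|^2\equiv 0$, and conclude by successive uniqueness for the FP and HJB equations. Your write-up is in fact more explicit than the paper's about the analytic justifications (integrability, adjoint identity beyond $C^2_c$, integration by parts at infinity); the paper treats these as routine and does not spell them out.
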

Since $\mathcal{L}$ and $\mathcal{L}^*$ are ajoint operators, the
proof of uniqueness is essentially the same as the proof in the
College de France lectures of P.-L. Lions
\cite{lionsCCDF,cardaliaguet2013mfg,ACDPS20}. For the readers convenience we
give the proof in Appendix \ref{sec:pf_uniq}.

\begin{example}\label{ex1}
  (a)  $F ( x,m ) = ( \rho \ast m ) (x)$ satisfies \ref{A1} and \ref{A2}
if $\rho \in C_{b}^{2} ( \R^d )$. 
    \medskip
    
\noindent (b) $F ( x,m ) = \int_{\R^d} \Phi ( z, ( \rho \ast m ) ( z )
) \rho ( x-z ) dz$ satisfies \ref{A1} and \ref{A2} if  $\rho\in
C^2_b$ and $\Phi\in C^1$.

    \medskip
    
\noindent (c) Both functions satisfy \ref{A7} if
$\rho\geq0$ and $\Phi$ is nondecreasing in its second argument.
\end{example}

\subsection{Fractional MFG with Local Coupling}
We consider the MFG system
\begin{align}
  \begin{split}
    \left\{ \begin{array}{ll}
        -\partial_t u - \mathcal{L} u + H \left( x, Du \right) = f \left( x, m \left( t,x \right) \right) & \text{ in } \left( 0,T \right) \times \R^d \smallskip \\ 
      \partial_t m - \mathcal{L}^* m - div \left( m D_p H \left( x, Du  \right) \right) = 0 & \text{ in } \left( 0,T \right) \times \R^d \smallskip \\ 
      m \left( 0 \right) = m_0 , u \left( x,T \right) = g \left( x \right),
    \end{array}
    \right.
  \end{split}
  \label{loc_MFG_system}
\end{align}
where the coupling term $f$ are local and only depend on the value of
$m$ at $\left( x,t \right)$. Again we impose fairly standard assumptions
on $f$,  $g$ and  $H$ \cite{lionsCCDF,cardaliaguet2013mfg}: \medskip 
\begin{description}
    \item[(A2')\label{A2'}] (Regularity) $f \in C^{2} ( \R^d \times [ 0, +\infty ) )$ 
        with $\| f ( \cdot, k )  \|_{C_{b}^2} \leq C_{k} $, 
        and $g \in  C_{b}^{3} ( \R^d  )$.
        \medskip
    \item[(A2'')\label{A2''}] (Uniform bound $f$) $\| f  \|_{C_{b}} \leq K_{f}$ for $K_{f} > 0$. 
        \medskip
    \item[(A3')\label{A3'}] (Lipschitz bound $H$)  $\| D_{p} H  \|_\infty \leq L_{H} $ for $L_{H}> 0$. \medskip
\end{description}

\begin{thm}
    Assume \ref{L1}--\ref{L3}, \ref{A3}-\ref{A6}, \ref{A2'}, and either \ref{A2''} or \ref{A3'}. 
    Then there exists a classical solution
    $\left( u,m \right)$ of (\ref{loc_MFG_system}) such that $u \in C_{b}^{1,3} ( ( 0,T ) \times \R^d )$ and $m \in C_{b}^{1,2} ( ( 0,T ) \times \R^d ) \cap C( [0,T]; P ( \R^{d} ) )$.
    \label{local_theorem}
\end{thm}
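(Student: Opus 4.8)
The plan is to obtain existence for the local-coupling system \eqref{loc_MFG_system} by approximating the local coupling $f(x,m(t,x))$ with a family of nonlocal couplings $F^\varepsilon(x,m)$ to which Theorem \ref{existence_theorem} applies, solving each regularized MFG system, and then passing to the limit $\varepsilon\to0$ using uniform a priori estimates and the compactness results for the coupled system. The standard device (following \cite{lionsCCDF,cardaliaguet2013mfg}) is a convolution-type mollification: set $F^\varepsilon(x,m) := (\rho_\varepsilon * f(\cdot,(\rho_\varepsilon*m)(\cdot)))(x)$, where $\rho_\varepsilon$ is a smooth mollifier, and similarly a terminal coupling $G^\varepsilon(x,m)$ built from $g$ (here $g$ is already $C_b^3$, so one can simply take $G^\varepsilon(x,m)=g(x)$, or mollify for uniformity). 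By Example \ref{ex1}(b) and \ref{A2'}, each $F^\varepsilon$ satisfies \ref{A1}--\ref{A2} with constants possibly depending on $\varepsilon$ (the $d_0$-Lipschitz constant blows up like a negative power of $\varepsilon$), while the Hamiltonian $H=H(x,Du)$ satisfies \ref{A3}--\ref{A5} trivially (it is $u$-independent, so \ref{A5} holds with $\gamma=0$). Theorem \ref{existence_theorem} then yields a classical solution $(u^\varepsilon,m^\varepsilon)$ with $u^\varepsilon\in C_b^{1,3}$, $m^\varepsilon\in C_b^{1,2}\cap C([0,T];P(\R^d))$.

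The crux is to extract $\varepsilon$-\emph{uniform} bounds strong enough to pass to the limit, since the $\varepsilon$-dependence in \ref{A1} is lost. First, because $f$ is bounded ($\|f\|_{C_b}\le K_f$ under \ref{A2''}, or via the Lipschitz control of $D_pH$ under \ref{A3'}), the source term $F^\varepsilon(\cdot,m^\varepsilon(t))$ is bounded in $L^\infty$ uniformly in $\varepsilon$; combined with the Lipschitz bound for fractional HJB equations (the variant of \cite{barles2012lipschitz}, using \ref{L3}) this gives a uniform global-in-time bound $\|u^\varepsilon\|_\infty+\|Du^\varepsilon\|_\infty\le C$. With $\|Du^\varepsilon\|_\infty$ under control and $H\in C^3$ (assumption \ref{A3}), the drift $b^\varepsilon:=-D_pH(x,Du^\varepsilon)$ is bounded in $L^\infty$, so by the new $L^\infty$-estimate for the FP equation (Lemma \ref{L-infinity_estimate} / Proposition \ref{prop:L1}) one gets $\|m^\varepsilon(t)\|_{L^\infty}\le C$ uniformly, together with preservation of mass and positivity. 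At this point the coupling arguments $f(x,(\rho_\varepsilon*m^\varepsilon)(x))$ and hence $F^\varepsilon(\cdot,m^\varepsilon)$ are uniformly bounded in $C_b$, and in fact one can push to $C_b^2$ bounds via \ref{A2'} and the $L^\infty$ (plus interior Hölder) control of $m^\varepsilon$. Feeding this back into the high-order regularity results for the coupled MFG system (the refined bootstrapping on fractional derivatives alluded to in the introduction, i.e. the coupled-system analogue of Theorems \ref{thm_HJ}, \ref{HJ_Holder_thm}, \ref{fokker_planck_holder}), one obtains $\varepsilon$-uniform bounds on $u^\varepsilon$ in $C_b^{1,3}$ and on $m^\varepsilon$ in $C_b^{1,2}$, together with space-time equicontinuity of the relevant derivatives and $C([0,T];P(\R^d))$-equicontinuity and tightness of $m^\varepsilon$ (the latter via Proposition \ref{prop:tail-control-function} and the FP tightness machinery of Section \ref{sec:fracFP}).

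The passage to the limit is then a compactness argument: by Arzelà--Ascoli (on compact subsets of $(0,T)\times\R^d$ for the derivatives) and Prokhorov (for $m^\varepsilon$ in $C([0,T];P(\R^d))$), along a subsequence $u^\varepsilon\to u$ in $C^{1,3}_{loc}$, $m^\varepsilon\to m$ in $C^{1,2}_{loc}$ and in $C([0,T];P(\R^d))$, with $u\in C_b^{1,3}$, $m\in C_b^{1,2}\cap C([0,T];P(\R^d))$. It remains to check that $(u,m)$ solves \eqref{loc_MFG_system}: the linear and nonlocal terms pass to the limit by the convergences above (using the $L^p$-bounds on $\mathcal L$ from Lemma \ref{L_p_bounds_eq} to control $\mathcal L u^\varepsilon,\mathcal L^* m^\varepsilon$ and their limits), the Hamiltonian terms converge since $H,D_pH$ are continuous and $Du^\varepsilon\to Du$ locally uniformly, and the coupling term converges because $(\rho_\varepsilon*m^\varepsilon)(x)\to m(x)$ pointwise (from $m^\varepsilon\to m$ locally uniformly and $\rho_\varepsilon\to\delta_0$) and $f$ is continuous, so $F^\varepsilon(x,m^\varepsilon(t))\to f(x,m(t,x))$ locally uniformly; the initial and terminal conditions pass by the uniform continuity up to $t=0,T$. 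The main obstacle is Step 2 — securing the $\varepsilon$-uniform high-order estimates for the coupled system despite the data $f(x,m)$ being only as regular as $m$ itself: the separate HJB/FP regularity theory is not enough, and one must genuinely bootstrap on the coupled pair, which is precisely where the more delicate fractional-derivative estimates (and the improved global Lipschitz/$L^\infty$ bounds) of Sections \ref{sec:fracHJB}--\ref{sec:fracFP} are used. A secondary subtlety is that the $d_0$-based uniqueness of the regularized systems is not available without convexity, so one should not expect the whole family $(u^\varepsilon,m^\varepsilon)$ to converge, only a subsequence, which is fine for an existence statement.
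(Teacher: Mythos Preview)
Your proposal follows essentially the same route as the paper's proof in Section~\ref{sec:loc}: mollify the coupling to reduce to Theorem~\ref{existence_theorem}, obtain $\varepsilon$-uniform $L^\infty$ bounds on $u^\varepsilon, Du^\varepsilon, m^\varepsilon$ via Theorem~\ref{IshiiLions} (using \ref{L3}) and Lemma~\ref{L-infinity_estimate}, then run a fractional bootstrap on the coupled Duhamel formulas to reach uniform $C_b^{1,3}\times C_b^{1,2}$ bounds, and finally pass to the limit by compactness. You have also correctly flagged the main obstacle: the bootstrap must be done on the coupled system because $f(x,m)$ is only as regular as $m$.

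One logical slip: under \ref{A3'} alone, your chain of estimates is circular as written. You claim $F^\varepsilon$ is bounded ``via the Lipschitz control of $D_pH$'', then deduce $\|Du^\varepsilon\|_\infty\le C$, then bound the drift, then bound $m^\varepsilon$. But \ref{A3'} does \emph{not} bound $f$ directly (\ref{A2'} only gives $\|f(\cdot,k)\|_{C_b^2}\le C_k$ with $C_k$ possibly unbounded in $k$); you need $\|m^\varepsilon\|_\infty$ first. The correct order under \ref{A3'}, which the paper uses, is: \ref{A3'} bounds the drift $D_pH$ \emph{directly} (no $Du^\varepsilon$ bound needed), so Lemma~\ref{L-infinity_estimate} gives $\|m^\varepsilon\|_\infty\le K$ uniformly, hence $\|\rho_\varepsilon*m^\varepsilon\|_\infty\le K$, hence $F^\varepsilon$ is uniformly bounded, and only then do Theorems~\ref{existence_visc_sol}(b) and~\ref{IshiiLions} yield the uniform bounds on $u^\varepsilon$ and $Du^\varepsilon$. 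Under \ref{A2''} your order is fine. A minor cosmetic point: the paper uses the simpler single mollification $F_\varepsilon(x,\mu)=f(x,(\mu*\phi_\varepsilon)(x))$ rather than your double mollification; either works for \ref{A1}--\ref{A2}.
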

The proof of this result is given in Section \ref{sec:loc}.
The idea is to approximate by a MFG system with nonlocal coupling
and use the compactness and stability results to pass to the
limit. These results rely on new regularity reults. As opposed to
the case of nonlocal coupling, it not enough to consider the HJB and
FP equations separately, in this local coupling case, regularity has
to be obtained directly for the coupled system. 
This requires the use of fractional regularity and bootstrap arguments.

For uniqueness we follow \cite{lionsCCDF,cardaliaguet2013mfg} and look at
the more general MFG system 
\begin{align}
    \label{second_order_local_coupling_uniqueness}
  \left\{
    \begin{array}{ll}
        - \partial_t u - \mathcal{L} u + H \left( x, Du,m \right) = 0 & \text{in } \R^d \times \left( 0,T \right) \\
	\partial_t m - \mathcal{L}^* m - div \left(m D_p H \left(x, Du \left(t, x \right),m \right) \right) = 0 & \text{in }   \R^d \times \left( 0,T \right)\\
	m \left(0 \right) = m_0 \ , \ u \left(x,T \right) = G \left(x \right), & 
    \end{array}
    \right.
\end{align}
where $H=H \left( x,p,m \right)$ is convex in $p$ and
\begin{description}
        \bigskip
    \item[(A10)\label{A10}] $\begin{bmatrix}
	  m \partial_{pp}^2 H & \frac{1}{2} m \partial_{pm}^2 H \\
	  \frac{1}{2} m \left( \partial_{pm}^2 H \right)^T & - \partial_{m} H
  \end{bmatrix} > 0 \ \ \text{ for all } \left( x,p,m \right) \text{ with } m>0.$
  \medskip
\end{description}
Note that when $H \left( x,p,m \right) = \tilde{H} \left( x,p \right) - F \left( x,m \right)$, we recover assumption \ref{A8}.

\begin{thm}
  Assume \ref{L1}, \ref{A10}, and $H = H \left( x,p,m \right) \in
  C^2$. Then (\ref{loc_MFG_system}) has at most one classical solution. 
  \label{local_uniqueness}
\end{thm}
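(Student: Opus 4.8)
The plan is to adapt the classical Lasry--Lions monotonicity argument for local-coupling MFG systems (as in \cite{lionsCCDF,cardaliaguet2013mfg}) to the fractional setting, using that $\mathcal L$ and $\mathcal L^*$ are adjoint. Suppose $(u_1,m_1)$ and $(u_2,m_2)$ are two classical solutions of \eqref{second_order_local_coupling_uniqueness}. First I would set $\bar u = u_1 - u_2$ and $\bar m = m_1 - m_2$, noting that $\bar u(\cdot,T)=0$ and $\bar m(\cdot,0)=0$. The key quantity is $\frac{d}{dt}\int_{\R^d} \bar u(t,x)\,\bar m(t,x)\,dx$. Differentiating and substituting the two PDEs for $\partial_t u_i$ and $\partial_t m_i$, the second-order/nonlocal terms cancel: the terms $\int (\mathcal L u_1 - \mathcal L u_2)\bar m$ and $\int \bar u(\mathcal L^* m_1 - \mathcal L^* m_2)$ are equal by the adjoint property (Lemma \ref{adjoint_operator}), and they appear with opposite signs, so they drop out. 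What remains, after integrating from $0$ to $T$ and using the boundary conditions to kill the boundary terms, is an identity of the form
\begin{align*}
  0 = \int_0^T\!\!\int_{\R^d} \Big[ &\big(H(x,Du_1,m_1) - H(x,Du_2,m_2)\big)\bar m \\
    &\quad - m_1 D_pH(x,Du_1,m_1)\cdot D\bar u + m_2 D_pH(x,Du_2,m_2)\cdot D\bar u \Big]\,dx\,dt,
\end{align*}
possibly with an overall sign. (I would take care to justify the integration by parts in $x$ for the divergence terms; here the global regularity and decay built into the notion of classical solution, together with the $C^{1,3}_b$ and $C^{1,2}_b \cap C([0,T];P(\R^d))$ membership carried over from Theorem \ref{local_theorem}, make the boundary-at-infinity terms vanish.)

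Next I would rearrange the integrand so that the monotonicity assumption \ref{A10} can be applied. The standard manipulation is to write the integrand at each $(t,x)$ as a sum of two pieces: one piece built from $m_1$ that measures the convexity defect of $H(x,\cdot,m_1)$ along the segment between $Du_2$ and $Du_1$, namely $m_1\big[H(x,Du_2,m_1)-H(x,Du_1,m_1)-D_pH(x,Du_1,m_1)\cdot(Du_2-Du_1)\big]$ and its symmetric counterpart with indices swapped, plus a cross term $\big(H(x,Du_2,m_1)-H(x,Du_2,m_2)\big)(m_1-m_2)$-type expression coming from the $m$-dependence. Assembling these, the integrand becomes exactly the quadratic form associated with the matrix in \ref{A10} evaluated on the vector $(Du_1-Du_2,\ m_1-m_2)$ (plus genuinely nonnegative convexity-defect remainders, using that $H$ is convex in $p$). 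Since that matrix is positive definite wherever $m>0$, the integrand is $\geq 0$ pointwise; combined with the identity that its integral is $0$, we conclude the integrand vanishes a.e. Strict positivity of the quadratic form then forces $Du_1 = Du_2$ a.e. on $\{m_1>0\}\cup\{m_2>0\}$ and, from the off-diagonal structure, $m_1 = m_2$ a.e.

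Finally, once $m_1=m_2=:m$, the drift coefficients $D_pH(x,Du_i,m)$ in the two Fokker--Planck equations agree on the support of $m$, so $\bar m$ solves a linear Fokker--Planck equation $\partial_t\bar m - \mathcal L^*\bar m - \mathrm{div}(\bar m\, D_pH(x,Du_1,m))=0$ with $\bar m(0)=0$; uniqueness for this linear equation (which is already part of the machinery in Section \ref{sec:fracFP}) gives $\bar m\equiv 0$ — though this is automatic from the previous step. Then $u_1$ and $u_2$ solve the same HJB equation $-\partial_t u - \mathcal L u + H(x,Du,m)=0$ with the same terminal data $G(x)$, so $\bar u$ solves a linear parabolic fractional equation with zero terminal data and, by the comparison principle / uniqueness for the fractional HJB equation from Section \ref{sec:fracHJB} (Theorem \ref{thm_HJ}), $\bar u\equiv 0$. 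The main obstacle I anticipate is purely technical rather than conceptual: justifying that $t\mapsto \int \bar u\,\bar m\,dx$ is absolutely continuous with the expected derivative, and that all the integrations by parts in $x$ (both for the divergence terms and implicitly for the adjoint cancellation) are licit on the whole space without moment assumptions — this is where one must lean on the decay and regularity in the definition of classical solution and on the $L^\infty$/$L^1$ bounds for $m$ from Section \ref{sec:fracFP}.
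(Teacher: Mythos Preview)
Your proposal is correct and follows the same Lasry--Lions monotonicity argument that the paper invokes: the paper explicitly skips the proof, noting that by adjointness of $\mathcal{L}$ and $\mathcal{L}^{*}$ it is identical to the one in \cite{lionsCCDF,cardaliaguet2013mfg}, with the minor adaptations visible in the nonlocal-coupling uniqueness proof in Appendix~\ref{sec:pf_uniq}. Your outline matches this exactly, including the cancellation of the nonlocal terms via adjointness and the use of the matrix condition \ref{A10}; the only quibble is that for the final HJB uniqueness step you should cite the comparison principle (Theorem~\ref{comp_visc_sol}) rather than Theorem~\ref{thm_HJ}.
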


We skip the proof which in view of adjointness of $\mathcal{L}$ and
$\mathcal{L}^*$ is the same as in
\cite{lionsCCDF,cardaliaguet2013mfg}. The minor adaptations needed can
be extracted from the uniqueness proof for nonlocal couplings given in
Appendix \ref{sec:pf_uniq}.

\section{Fractional heat kernel estimates}
\label{sec:fracHK}
Here we introduce fractional heat kernels and prove $L^1$-estimates of their 
spatial derivatives. These estimates are used for the regularity 
results of Sections \ref{sec:fracHJB}, \ref{sec:fracFP}, and \ref{sec:loc}. 
The heat kernel of an elliptic operator $\mathcal A$ is the
fundamental solution of  
$\partial_{t} u - \mathcal{A} u = 0$, or 
$u = \mathcal{F}^{-1} ( e^{t \hat{\mathcal{A}}} )$, 
where $\hat{\mathcal{A}}$ is the Fourier multiplier defined by 
$\mathcal{F} ( \mathcal{A} u ) = \hat{ \mathcal{A}} \hat{u}$.
Taking the Fourier transform of \eqref{Levy_operator}, 
a direct calculation (see \cite{applebaum2009levy}) shows that 

\begin{align*}
    \mathcal{F} \big(\mathcal{L} u \big) = \hat{\mathcal{L}} (\xi) \hat{u} (\xi),
\end{align*}
where 
\begin{align}
    \hat{\mathcal{L}} (\xi) =   \int_{\R^d}  \big( e^{i x\cdot \xi} - 1 - i \xi \cdot z 1_{|z| <1} \big) \,d\mu (z).
    \label{Fourier_multiplicator}
\end{align}
We can split $\hat{\mathcal{L}}$ into a singular and a non-singular part,
\begin{align}
    \hat{\mathcal{L}} (\xi) = \bigg( \int_{|z|<1} + \int_{|z| \geq 1} \bigg) \big( e^{i x\cdot \xi} - 1 - i \xi \cdot z 1_{|z| <1} \big) \,d\mu (z) = \hat{\mathcal{L}}_s (\xi) + \hat{\mathcal{L}}_n (\xi).
    \label{Fourier_symbol}
\end{align}
Note that since $\mu \geq 0$, $\text{Re } \hat{\mathcal{L}} = \int \big( \cos(z\cdot \xi ) -1 \big) \,d\mu \leq 0$.

We will need the heat kernels $K_\sigma$
and $\tilde K_\sigma$ of $\mathcal{L}$ and $\mathcal{L}_s$:
\begin{align}
K_{\sigma} (t,x) = \mathcal{F}^{-1} \big(e^{t \hat{\mathcal{L}}
  (\cdot)} \big)\qquad \text{and} \qquad \tilde{K}_{\sigma} (t,x) = \mathcal{F}^{-1} \big( e^{t \hat{\mathcal{L}}_s (\cdot)} \big).
    \label{heat_kernel_def}
\end{align}
By the L\'evy-Kinchine theorem (Theorem 1.2.14 in
\cite{applebaum2009levy}), $K_{\sigma}$ and $\tilde K_{\sigma}$ are
probability measures for $t>0$:
\begin{align*}
  K_{\sigma},\tilde K_{\sigma}\geq 0\qquad \text{and}\qquad
  \int_{\R^d} K_{\sigma}(x,t)\,dx=1=\int_{\R^d} \tilde K_{\sigma}(x,t)\,dx.
\end{align*}
When \ref{L2'} holds, $\text{Re } \hat{\mathcal{L}}$ and $\text{Re }
\hat{\mathcal{L}}_s \leq -c |\xi|^{\sigma}$ for $|\xi| \geq 1$, and
$K_{\sigma}$ and $\tilde K_{\sigma}$ are absolutely continuous since
$|e^{t\hat{\mathcal{L}}(\cdot)}|$ decays exponentially at infinity. 
An immediate consequence of assumption \ref{L2} is existence for the
 corresponding fractional heat equation. 
\begin{proposition}   \label{heat_eqn_cauchy}
    Assume \ref{L1}, \ref{L2}, $u_0 \in L^{\infty} (\R^d)$, and let $u \left( t,x
    \right) = K_{\sigma} \left( t,\cdot \right) \ast u_0 \left( x
    \right)$. Then $u \in C^{\infty} \big( (0,T) \times \R^d \big)$
    and $u$ is a classical solution of
\begin{align*}
          \partial_t u - \mathcal{L} u = 0\quad \text{in}\quad \R^d\times (0,T), \qquad u \left( 0,x \right)  = u_0 \left( x \right)\quad \text{in}\quad \R^d.
\end{align*}
\end{proposition}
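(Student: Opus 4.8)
The plan is to establish that $u(t,x) = K_\sigma(t,\cdot) \ast u_0(x)$ inherits smoothness in $x$ from the kernel estimates in \ref{L2}, obtain smoothness in $t$ by differentiating under the integral sign in Fourier variables, verify the PDE via the Fourier-multiplier identity, and finally check the initial condition using that $K_\sigma(t,\cdot)$ is an approximate identity. Throughout I would work with the Duhamel/convolution representation rather than directly with $\mathcal{L}$, exploiting that $\mathcal{L}$ acts as the Fourier multiplier $\hat{\mathcal{L}}(\xi)$ from \eqref{Fourier_multiplicator}.

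\emph{Spatial regularity and decay of derivatives.} Fix $t_0 \in (0,T)$ and $t \geq t_0$. For any multi-index $\beta$, since $\|D^\beta K_\sigma(t,\cdot)\|_{L^1(\R^d)} \leq \mathcal{K} t^{-|\beta|/\sigma}$ by \ref{L2} with $p=1$, Young's inequality gives $D_x^\beta u(t,\cdot) = (D^\beta K_\sigma(t,\cdot)) \ast u_0$ with $\|D_x^\beta u(t,\cdot)\|_\infty \leq \mathcal{K} t^{-|\beta|/\sigma} \|u_0\|_\infty$; differentiation under the integral sign is justified because $K_\sigma(t,\cdot) \in W^{k,1}$ for every $k$ and every fixed $t>0$. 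Continuity of $(t,x) \mapsto D_x^\beta u(t,x)$ on $(0,T) \times \R^d$ follows from continuity in $L^1$ of $t \mapsto D^\beta K_\sigma(t,\cdot)$ (which itself follows from the semigroup property $K_\sigma(t+s,\cdot) = K_\sigma(t,\cdot)\ast K_\sigma(s,\cdot)$ together with the stated bounds) and dominated convergence.

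\emph{Time regularity and the equation.} On the Fourier side, $\hat{u}(t,\xi) = e^{t\hat{\mathcal{L}}(\xi)}\hat{u}_0(\xi)$ formally, but since $u_0 \in L^\infty$ only, I would instead differentiate the convolution directly: $\partial_t K_\sigma(t,x) = \mathcal{L} K_\sigma(t,\cdot)(x)$ holds because $K_\sigma$ solves the heat equation (this is the defining property of the heat kernel together with the multiplier identity $\mathcal{F}(\mathcal{L}\phi) = \hat{\mathcal{L}}\hat{\phi}$ and $\partial_t e^{t\hat{\mathcal{L}}} = \hat{\mathcal{L}}e^{t\hat{\mathcal{L}}}$, with $\hat{\mathcal{L}}$ bounded by $C(1+|\xi|^\sigma)$ so all manipulations are licit against the rapidly-decaying $e^{t\hat{\mathcal{L}}}$). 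Then $\partial_t u(t,x) = (\partial_t K_\sigma(t,\cdot))\ast u_0(x) = (\mathcal{L}K_\sigma(t,\cdot))\ast u_0(x) = \mathcal{L}(K_\sigma(t,\cdot)\ast u_0)(x) = \mathcal{L}u(t,x)$, where the interchange of $\mathcal{L}$ (an integral operator) with convolution is Fubini, valid because $K_\sigma(t,\cdot) \in C^2_b$ with integrable second derivatives and $u_0 \in L^\infty$, so the pointwise bound on $\mathcal{L}$ displayed after \eqref{Levy_operator} applies. Iterating, $\partial_t^j D_x^\beta u$ exists, is bounded on $[t_0,T]\times\R^d$, and is continuous, giving $u \in C^\infty((0,T)\times\R^d)$.

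\emph{Initial condition.} Since $K_\sigma(t,\cdot) \geq 0$ and $\int K_\sigma(t,\cdot) = 1$, and since $K_\sigma(t,\cdot)$ concentrates at the origin as $t \to 0^+$ (which follows from $\hat{K}_\sigma(t,\xi) = e^{t\hat{\mathcal{L}}(\xi)} \to 1$ pointwise together with $L^1$-normalization, or from the heat-kernel upper bound), the family $\{K_\sigma(t,\cdot)\}_{t>0}$ is an approximate identity, so $K_\sigma(t,\cdot)\ast u_0 \to u_0$ as $t\to 0^+$ in the appropriate sense for $u_0 \in L^\infty$ (e.g. pointwise at Lebesgue points, and uniformly on sets where $u_0$ is uniformly continuous). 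The main obstacle is the low regularity $u_0 \in L^\infty$: one cannot pass to the Fourier side naively, so all the smoothing must be extracted from the kernel estimates \ref{L2} and moved onto $u_0$ via Young's inequality, and the interchange of $\mathcal{L}$ with the convolution must be justified by Fubini using the explicit pointwise bound on $\mathcal{L}$ rather than by multiplier arguments.
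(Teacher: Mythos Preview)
The paper does not give a proof of this proposition; it is stated as ``an immediate consequence of assumption \ref{L2}'' and no argument is supplied. Your proposal is therefore supplying details the paper omits rather than paralleling an existing proof, and the approach you outline---pull spatial derivatives onto $K_\sigma$ via the $L^1$ estimates in \ref{L2}, verify $\partial_t K_\sigma=\mathcal L K_\sigma$ on the Fourier side, commute $\mathcal L$ with the convolution by Fubini, and check the initial datum via the approximate-identity property---is correct and is exactly what the paper is implicitly invoking.

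One point worth tightening: to justify $\partial_t K_\sigma=\mathcal L K_\sigma$ pointwise you need $\hat{\mathcal L}(\xi)e^{t\hat{\mathcal L}(\xi)}\in L^1(\R^d_\xi)$, and \ref{L2} does not directly give you a lower bound of the form $\mathrm{Re}\,\hat{\mathcal L}(\xi)\leq -c|\xi|^\sigma$ (that comes from \ref{L2'}, which is stronger). However, \ref{L2}(ii) already gives $D^\beta K_\sigma(t,\cdot)\in L^1$ for every $\beta$, hence $\hat K_\sigma(t,\xi)=e^{t\hat{\mathcal L}(\xi)}$ decays faster than any power of $|\xi|$; combined with the polynomial growth $|\hat{\mathcal L}(\xi)|\leq C(1+|\xi|^\sigma)$ coming from \ref{L2}(i), this makes the Fourier-side differentiation licit. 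Your remark that the initial condition for $u_0\in L^\infty$ should be read as convergence at Lebesgue points (or locally uniformly where $u_0$ is continuous) is the correct interpretation.
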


We first show that sums of operators $\mathcal L_i$ satisfying \ref{L1}
and \ref{L2} also satisfy \ref{L1} and \ref{L2}. Let
\begin{align}\label{Lsum}\mathcal L=\mathcal L_1 + \dots +\mathcal L_N
  \quad\text{where} \quad \mathcal L_i u (x) = \int_{Z_i} u (x+z) - u(x) - Du (x) \cdot z
  1_{|z| <1}\ d \mu_i (z),
  \end{align}
$Z_i$ is a $d_i$-dimensional subspace, $\oplus_{i=1}^N Z_i=\R^d$, and 
 $\mathcal L_i$ satisfy \ref{L1} and \ref{L2} in $Z_i$: 
\medskip

\begin{description}
\item[(L1'')\label{L1''}]
  (i) $Z_i\simeq
      \R^{d_i}$ is a subspace for $i=1,\dots,N$, and $\oplus_{i=1}^M Z_i=\R^d$ for $M\leq N$.
\medskip

\quad (ii) $\mu_i\geq0$ is a Radon measure on $Z_i$ satisfying
  $\int_{Z_i} 1 \wedge |z|^2 \,d\mu_i ( z ) < \infty$.\smallskip
  \medskip

\item[(L2'')\label{L2''}] (i) $\mu_i$ satisfy the upper bound \eqref{u-bnd} with $\sigma=\min_i\sigma_i$.
  \medskip

\quad (ii) There are $\sigma_i \in (1,2)$ and $c_i
>0$ such that the heat kernels $K_i$ and $K_i^*$ of

\qquad\ \ $\mathcal L_i$ and $\mathcal L_i^*$ satisfy for $p\in[1,\infty)$, 
  $\beta\in \N_0^{d_i}$, $i=1,\dots,M$, and $t\in(0,T)$,
        \begin{align*}
           \qquad\qquad\qquad \|D_{z_i}^{\beta} K_i
  (t,\cdot) \|_{L^p (Z_i)}+\|D_{z_i}^{\beta} K_i^*
  (t,\cdot) \|_{L^p (Z_i)} \leq c_i t^{-\frac1{\sigma_i}\big({|\beta|}+(1-\frac1p)d\big)}.
        \end{align*}
\end{description}

First observe that here $\mu=\sum_i\mu_i \delta_{0,Z_i^\perp}$ where
$\delta_{0,Z_i^\perp}$ is the delta-measure in $Z_i^\perp$ centered at
$0$. It immediately follows that \ref{L1''} and \ref{L2''} imply
\ref{L1} and \ref{L2} (i). 
\begin{thm}\label{thm_sum}
Assume \ref{L1''}, \ref{L2''} (ii), and $\mathcal L$ is defined in
\eqref{Lsum}. Then the heat kernel $K$ and $K^*$ of $\mathcal L$ and $\mathcal L^*$ belongs to
$C^{\infty}$ and satisfy \ref{L2} (ii) with
$\sigma=\min_i\sigma_i$, i.e. 
$$\|D_x^{\beta} K
  (t,\cdot) \|_{L^p (\R^d)} +\|D_x^{\beta} K^*
  (t,\cdot) \|_{L^p (\R^d)} \leq c_{\beta,T} t^{-\frac1{\sigma_i}\big({|\beta|}+(1-\frac1p)d\big)} \quad
\text{for}\quad t\in(0,T),\quad \beta\in\N_0^d.$$
\end{thm}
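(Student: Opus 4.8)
The plan is to exploit the fact that the Fourier symbol of a sum of operators acting on complementary subspaces is the sum of the individual symbols, so the heat kernel factors as a tensor product of the individual heat kernels. Concretely, write $\xi = (\xi_1, \dots, \xi_N)$ according to the decomposition $\R^d = \oplus_i Z_i$. Since $\mathcal L_i$ acts only in the $Z_i$-variables (its L\'evy measure is $\mu_i \delta_{0,Z_i^\perp}$), a direct computation from \eqref{Fourier_multiplicator} gives $\hat{\mathcal L}(\xi) = \sum_{i=1}^N \hat{\mathcal L}_i(\xi_i)$, where $\hat{\mathcal L}_i$ is the symbol of $\mathcal L_i$ as an operator on $Z_i \simeq \R^{d_i}$. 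Hence
\begin{align*}
  K(t,x) = \mathcal F^{-1}\Big(e^{t\hat{\mathcal L}(\cdot)}\Big)(x)
  = \mathcal F^{-1}\Big(\textstyle\prod_{i=1}^N e^{t\hat{\mathcal L}_i(\cdot)}\Big)(x)
  = \bigotimes_{i=1}^N K_i(t,\cdot)\,(x),
\end{align*}
i.e. $K(t,x) = \prod_{i=1}^N K_i(t,x_i)$ with $x_i$ the component of $x$ in $Z_i$; the same holds for $K^*$ with each $K_i^*$ since $\mu^* = \sum_i \mu_i^* \delta_{0,Z_i^\perp}$. By Proposition \ref{heat_eqn_cauchy} applied in each $Z_i$ (or directly from the exponential decay of $e^{t\hat{\mathcal L}_i}$ guaranteed by the kernel bounds in \ref{L2''}(ii), which force $\mathrm{Re}\,\hat{\mathcal L}_i(\xi_i) \to -\infty$), each $K_i \in C^\infty((0,T)\times Z_i)$, hence $K \in C^\infty((0,T)\times\R^d)$.

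For the $L^p$ estimate, a multi-index $\beta \in \N_0^d$ splits as $\beta = (\beta_1, \dots, \beta_N)$ with $\beta_i \in \N_0^{d_i}$ and $D_x^\beta K(t,x) = \prod_{i=1}^N D_{z_i}^{\beta_i} K_i(t,x_i)$. Since the variables separate, Fubini gives
\begin{align*}
  \|D_x^\beta K(t,\cdot)\|_{L^p(\R^d)} = \prod_{i=1}^N \|D_{z_i}^{\beta_i} K_i(t,\cdot)\|_{L^p(Z_i)}
  \leq \prod_{i=1}^N c_i\, t^{-\frac1{\sigma_i}(|\beta_i| + (1-\frac1p)d_i)}.
\end{align*}
(For indices $i>M$ one has $Z_i = \{0\}$, $\beta_i = 0$, and the corresponding factor is $1$, so effectively the product runs to $M$.) Collecting the exponents and using $\sum_i |\beta_i| = |\beta|$, $\sum_i d_i = d$, together with $\sigma = \min_i \sigma_i \leq \sigma_i$, we bound each exponent: for $t \in (0,T)$, $t^{-\frac1{\sigma_i}(\cdots)} \leq \max(1, T)^{\frac1{\sigma_i}(\cdots)} \cdot t^{-\frac1{\sigma}(\cdots)}$-type estimates let us absorb the mismatch between the $\sigma_i$ into a $T$-dependent constant. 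More cleanly: since $\frac1{\sigma_i}(|\beta_i| + (1-\frac1p)d_i) \le \frac1{\sigma}(|\beta_i| + (1-\frac1p)d_i)$ and $t^{-a} \le \max(1,T^{b-a})\, t^{-b}$ for $0 \le a \le b$ and $t\in(0,T)$, we get $\prod_i c_i t^{-\frac1{\sigma_i}(|\beta_i|+(1-\frac1p)d_i)} \le c_{\beta,T}\, t^{-\frac1{\sigma}(|\beta|+(1-\frac1p)d)}$. The analogous computation with $K_i^*$ yields the bound for $K^*$, completing the proof.

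The only genuinely delicate point is the exponent bookkeeping when the orders $\sigma_i$ differ: the natural product estimate produces a sum of powers $\sum_i \frac{1}{\sigma_i}(|\beta_i| + (1-\frac1p)d_i)$ rather than the clean single power $\frac1{\sigma}(|\beta| + (1-\frac1p)d)$, and one must verify that for small $t$ the former is controlled by the latter (it is, since $\sigma \le \sigma_i$ makes each term larger when rewritten with $\sigma$, at the cost of a constant depending on $T$). Everything else — the tensor-product structure of the kernel, smoothness, and Fubini — is routine once the Fourier symbol is seen to be additive across the subspaces.
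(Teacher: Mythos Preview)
Your proof is correct and follows the same route as the paper: both observe that the symbol $\hat{\mathcal L}$ is additive across the subspaces $Z_i$, so the heat kernel factors as $K(t,x)=\prod_i K_i(t,x_i)$ (the paper writes this as the convolution $\mathcal K_1*\cdots*\mathcal K_N$ of the lifted measures $\mathcal K_i=K_i\,\delta_{0,Z_i^\perp}$), and then bounds $\|D^\beta K\|_{L^p}$ factor by factor. Your Fubini computation with the multi-index split $\beta=(\beta_1,\dots,\beta_N)$ is slightly more explicit than the paper's Young-inequality formulation---in particular it handles mixed derivatives and the $p>1$ case more transparently, since the paper's step ``$\leq 1\cdot\|D_{z_i}^\beta K_i\|_{L^p(Z_i)}$'' tacitly uses the tensor structure rather than a literal Young bound---but the underlying idea is identical.
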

\begin{proof}
  First note that in this case $K(t)=\mathcal F^{-1}(e^{t\hat{\mathcal
      L}_1}\cdots e^{t\hat{\mathcal L}_N})=\mathcal K_1(t)*\dots
  *\mathcal K_N(t)$ where
  $$\mathcal K_i(t):=\mathcal F^{-1}_{\R^d}(e^{t\hat{\mathcal
  L}_i})=K_i(t)\delta_{0,Z_i^\perp},\quad K_i(t)=\mathcal
  F^{-1}_{Z_i}(e^{t\hat{\mathcal L}_i}).$$
 For $t\in(0,T)$, \ref{L2''} (ii) implies that
 $$\|D_{z_i}^\beta\mathcal K_i(t)\|_{L^p(\R^d)}=\|D_{z_i}^{\beta} K_{i}
  (t,\cdot) \|_{L^p (Z_i)} \leq c_i t^{-\frac1{\sigma_i}\big({|\beta|}+(1-\frac1p)d\big)}\leq
 c_T t^{-\frac1{\sigma}\big({|\beta|}+(1-\frac1p)d\big)}\ (\sigma\leq\sigma_i)$$
 for some constant $c_T>0$.
 Since $K_i$ is a probability measure by the L\'evy-Kinchine theorem 
 \cite[Thm 1.2.14]{applebaum2009levy},
 $\|\mathcal
 K_j(t)\|_{L^1(\R^d)}=\|K_j(t)\|_{L^1(Z_j)}=1.$ 
 By properties of mollifiers and Young's inequality for
 convolutions it then follows that
 \begin{align*}&\|D_{z_i}^{\beta} K(t,\cdot) \|_{L^p} =\|\mathcal K_1 *
 \dots * D_{z_i}^{\beta} \mathcal K_i* \dots *\mathcal K_N
 \|_{L^p}
 \leq 1\cdot\|D_{z_i}^\beta K_i\|_{L^p(Z_i)}
 \leq c_T t^{-\frac1{\sigma}\big({|\beta|}+(1-\frac1p)d\big)}.
 \end{align*}
 Since $i=1,\dots,M$ was arbitrary and $\oplus_{i=1}^M Z_i=\R^d$, the proof
for $K$ is complete. The proof for $K^*$ is similar.
\end{proof}
 
\noindent It is easy to check that \ref{L2'} implies \ref{L2}(i). We then check that \ref{L2'} implies \ref{L2}(ii).

\begin{thm}    \label{L_heat_kernel_estimate}
  Assume \ref{L1}, \ref{L2'}, and $\mathcal L$ is defined in
  \eqref{Levy_operator}.  Then the 
heat kernels $K$ and $K^*$ of $\mathcal L$ and $\mathcal L^*$ belong to $C^{\infty}$ and 
satisfies \ref{L2}(ii): For $p\in[1,\infty)$, $\beta\in\N_0^d$,
$$\|D_x^{\beta} K
  (t,\cdot) \|_{L^p (\R^d)}+\|D_x^{\beta} K^*
  (t,\cdot) \|_{L^p (\R^d)} \leq c_{\beta,T} t^{-\frac1{\sigma}\big({|\beta|}+(1-\frac1p)d\big)} \quad
\text{for}\quad t\in(0,T).$$
\end{thm}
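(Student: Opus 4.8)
\emph{Strategy: reduce to the singular part.} I would first peel off the large jumps. Writing $\mathcal{L}=\mathcal{L}_s+\mathcal{L}_n$ as in \eqref{Fourier_symbol}, the operator $\mathcal{L}_n$ is the generator attached to the finite measure $\mu\,1_{|z|\ge1}$, so by the L\'evy--Khintchine theorem $\nu_t:=\mathcal{F}^{-1}\big(e^{t\hat{\mathcal{L}}_n}\big)$ is a probability measure for every $t>0$ (the law of a compound Poisson process), and since $\hat{\mathcal{L}}=\hat{\mathcal{L}}_s+\hat{\mathcal{L}}_n$ one has $K(t,\cdot)=\tilde K_\sigma(t,\cdot)\ast\nu_t$. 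Because $D_x^\beta(f\ast\nu_t)=(D_x^\beta f)\ast\nu_t$ and Young's inequality gives $\|g\ast\nu_t\|_{L^p}\le\|g\|_{L^p}$ for a probability measure $\nu_t$, it follows that
\[
\|D_x^\beta K(t,\cdot)\|_{L^p(\R^d)}\le\|D_x^\beta\tilde K_\sigma(t,\cdot)\|_{L^p(\R^d)},\qquad t>0,\ p\in[1,\infty),
\]
and likewise $K(t,\cdot)\in C^\infty$ as soon as $\tilde K_\sigma(t,\cdot)\in C^\infty$ with integrable derivatives. Since $\mu^\ast$ also satisfies \ref{L2'} by Lemma \ref{adjoint_operator}, the same reduction handles $K^\ast$. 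Thus it suffices to prove the stated bounds for $\tilde K_\sigma$, the heat kernel of the purely singular operator $\mathcal{L}_s$, whose L\'evy measure is by \ref{L2'} two-sided comparable to $c_{d,\sigma}|z|^{-d-\sigma}$ on $B_1$ and vanishes outside $B_1$.

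\emph{Estimating $\tilde K_\sigma$.} There are two routes. The shortest uses pointwise heat-kernel estimates for $\mathcal{L}_s$ of the type established (following \cite{grzywny2017estimates}) elsewhere in the paper: for $t\le1$ the kernel only feels the behaviour of $\mu$ near the origin, so the truncation at $|z|=1$ is harmless, and one gets $|D_x^\beta\tilde K_\sigma(t,x)|\le C\,t^{-(d+|\beta|)/\sigma}h\big(t^{-1/\sigma}x\big)$ for a fixed $h\in L^1(\R^d)\cap L^\infty(\R^d)$; integrating and rescaling gives $\|D_x^\beta\tilde K_\sigma(t,\cdot)\|_{L^p}\le C\,t^{-(d+|\beta|)/\sigma}\big(t^{d/\sigma}\|h\|_{L^p}^p\big)^{1/p}=C\,t^{-\frac1\sigma(|\beta|+d(1-\frac1p))}$. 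The self-contained route is Fourier-analytic. From \ref{L2'} one first extracts the symbol estimates
\[
\mathrm{Re}\,\hat{\mathcal{L}}_s(\xi)\le-c\,|\xi|^{\sigma}\ \text{ for }|\xi|\ge1,\qquad |D_\xi^\gamma\hat{\mathcal{L}}_s(\xi)|\le C_\gamma\,(1+|\xi|)^{\sigma-|\gamma|}\ \text{ for }|\gamma|\ge1,
\]
i.e.\ $\hat{\mathcal{L}}_s$ is a symbol of order $\sigma$ with negative real part; in particular $e^{t\hat{\mathcal{L}}_s}\in C^\infty$ with exponential decay in $\xi$, so $\tilde K_\sigma(t,\cdot)\in C^\infty$ and $D_x^\beta\tilde K_\sigma(t,\cdot)=\mathcal{F}^{-1}\big((i\xi)^\beta e^{t\hat{\mathcal{L}}_s}\big)$. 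Plancherel, the exponential decay, and the rescaling $\eta=t^{1/\sigma}\xi$ then give $\|D_x^\beta\tilde K_\sigma(t,\cdot)\|_{L^2}\le C\,t^{-\frac1\sigma(|\beta|+d/2)}$ for $t\in(0,1]$; and for $p\in(2,\infty)$, Hausdorff--Young reduces the bound to $\|(i\xi)^\beta e^{t\hat{\mathcal{L}}_s}\|_{L^{p'}}$, which the same rescaling argument (with $p'\in(1,2)$) estimates by $C\,t^{-\frac1\sigma(|\beta|+d(1-1/p))}$.

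\emph{The range $p\in[1,2)$ and conclusion.} By the elementary interpolation $\|f\|_{L^p}\le\|f\|_{L^1}^{2/p-1}\|f\|_{L^2}^{2-2/p}$ it remains only to prove $\|D_x^\beta\tilde K_\sigma(t,\cdot)\|_{L^1}\le C\,t^{-|\beta|/\sigma}$ for $t\in(0,1]$. For $\beta=0$ this is trivial since $\tilde K_\sigma(t,\cdot)$ is a probability density. For $|\beta|\ge1$, I would combine the $L^2$ bound with the scaling-sharp weighted estimate $\big\||x|^{2M}D_x^\beta\tilde K_\sigma(t,\cdot)\big\|_{L^2}\le C\,t^{-\frac1\sigma(|\beta|+d/2-2M)}$ through $\|f\|_{L^1}\le C\,\|f\|_{L^2}^{1-\theta}\big\||x|^{2M}f\big\|_{L^2}^{\theta}$ with $\theta=d/(4M)$, which produces exactly the exponent $-|\beta|/\sigma$. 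The weighted estimate follows, via Plancherel, from bounding $\|(-\Delta_\xi)^M\big((i\xi)^\beta e^{t\hat{\mathcal{L}}_s}\big)\|_{L^2}$: expanding by the Leibniz and Fa\`a di Bruno formulas, each term is of the form $(1+|\xi|)^{q}\,t^m\prod_{j=1}^m D_\xi^{\delta_j}\hat{\mathcal{L}}_s\cdot e^{t\hat{\mathcal{L}}_s}$, and the symbol decay $|D_\xi^{\delta_j}\hat{\mathcal{L}}_s|\le C(1+|\xi|)^{\sigma-|\delta_j|}$ makes the effective polynomial exponent equal to $|\beta|+m\sigma-2M$, so that after the rescaling $\eta=t^{1/\sigma}\xi$ the accompanying factor $t^m$ exactly compensates and each term is $\le C\,t^{-\frac1\sigma(|\beta|+d/2-2M)}$; here $M$ is a positive integer with $2M$ in the window $(d/2,\,d/2+|\beta|+\sigma)$, which is nonempty because $|\beta|\ge1$, whose left endpoint makes $\|\,|x|^{2M}f\|_{L^2}$ controllable and whose right endpoint keeps the rescaled integrals convergent. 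Finally, $t\in(0,1]$ is upgraded to $t\in(0,T)$ by the semigroup property: for $t\in[1,T)$, $\|D_x^\beta K(t,\cdot)\|_{L^p}=\|(D_x^\beta K(\frac12,\cdot))\ast K(t-\frac12,\cdot)\|_{L^p}\le\|D_x^\beta K(\frac12,\cdot)\|_{L^p}$, which is $\le c_{\beta,T}\,t^{-\frac1\sigma(|\beta|+d(1-1/p))}$ on $[1,T]$ since the power of $t$ is bounded below there; and everything applies verbatim to $K^\ast$.

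\emph{Main obstacle.} The crux is the range $p<2$, equivalently the $L^1$-bound on derivatives: Hausdorff--Young goes the wrong way, and the scaling-\emph{sharp} weighted $L^2$-estimate genuinely needs the symbol \emph{decay} $|D_\xi^\gamma\hat{\mathcal{L}}_s|\le C(1+|\xi|)^{\sigma-|\gamma|}$ for $|\gamma|\ge2$ (plain boundedness of these derivatives is not enough), together with the fact that $\mu$ is truncated at $|z|=1$. On the pointwise route the same difficulty reappears as the need to show that the heat-kernel and gradient bounds of \cite{grzywny2017estimates} survive for the truncated operator $\mathcal{L}_s$ — the extension to L\'evy measures with arbitrary (here, trivial) behaviour at infinity.
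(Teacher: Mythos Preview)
Your reduction to the singular part---writing $K(t,\cdot)=\tilde K_\sigma(t,\cdot)\ast\nu_t$ with $\nu_t=\mathcal{F}^{-1}(e^{t\hat{\mathcal{L}}_n})$ a probability measure, applying Young's inequality, and handling $K^*$ via Lemma \ref{adjoint_operator}---is exactly the paper's argument. The paper then proves the bound for $\tilde K_\sigma$ (Lemma \ref{singular_heat_kernel_estimate}) along your first route: it verifies the hypotheses of Theorem 5.6 in \cite{grzywny2017estimates} for the truncated profile $\nu_0(r)=r^{-d-\sigma}1_{r<1}$, obtains a pointwise bound $|D_x^\beta \tilde K_\sigma(t,x+tb_t)|\le C\,t^{-|\beta|/\sigma}Y_t(x)$ with an explicit majorant $Y_t$, and computes $\|Y_t\|_{L^p}$ directly. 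On this route you and the paper agree.

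Your second, Fourier-analytic route has a gap precisely at the point you flag as the crux. The claimed decay $|D_\xi^\gamma\hat{\mathcal{L}}_s(\xi)|\le C_\gamma(1+|\xi|)^{\sigma-|\gamma|}$ for $|\gamma|\ge2$ does \emph{not} follow from \ref{L2'}: that hypothesis only bounds the density $f=d\mu/dz$ two-sidedly by $|z|^{-d-\sigma}$ on $B_1$, with no regularity. For $|\gamma|\ge2$ one has $D_\xi^\gamma\hat{\mathcal{L}}_s(\xi)=\int_{B_1}(iz)^\gamma e^{iz\cdot\xi}f(z)\,dz$; splitting at $|z|=1/|\xi|$, the inner shell contributes $O(|\xi|^{\sigma-|\gamma|})$, but the outer shell $1/|\xi|<|z|<1$ gives only $O(1)$ by absolute value, and improving this via oscillation would require smoothness of $f$ that \ref{L2'} does not provide. (If $f$ were exactly $c|z|^{-d-\sigma}$ the homogeneity would do it, but not for a merely comparable $f$.) Without this decay the weighted-$L^2$ bookkeeping collapses: for instance the single-block Fa\`a di Bruno term $(i\xi)^\beta\cdot t\,D_\xi^{\alpha_2}\hat{\mathcal{L}}_s\cdot e^{t\hat{\mathcal{L}}_s}$ with $|\alpha_2|=2M$, using only boundedness of $D_\xi^{\alpha_2}\hat{\mathcal{L}}_s$, contributes $C\,t^{1-(|\beta|+d/2)/\sigma}$ to $\|\,|x|^{2M}D_x^\beta\tilde K_\sigma\|_{L^2}$, which is strictly larger than the needed $t^{-(|\beta|+d/2-2M)/\sigma}$ for small $t$ since $2M\ge2>\sigma$. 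As you yourself say, plain boundedness is not enough---and that is all \ref{L2'} delivers. The paper sidesteps this entirely by relying on the pointwise machinery of \cite{grzywny2017estimates}, which needs only the two-sided density comparison.
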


\begin{example}
  In view of Theorems \ref{thm_sum} and \ref{L_heat_kernel_estimate},
  assumption \ref{L2} is satisfied by e.g. 
  \begin{align*}
    &\mathcal L_1=-(-\Delta_{\R^d})^{\sigma_1/2}-(-\Delta_{\R^d})^{\sigma_2/2},\\
    &\mathcal L_2=-\Big(\!-\frac{\partial^2}{\partial
      x_1^2}\Big)^{\sigma_1/2}-\dots-\Big(\!-\frac{\partial^2}{\partial
      x_d^2}\Big)^{\sigma_d/2},\\
    &\mathcal L_3u(x)= \int_{\R}u(x+z)-u(x)-u'(x)z1_{|z|<1}
    \frac{Ce^{-M z^+ - G z^-}}{|z|^{1+Y}},\\
    &\qquad \text{where $C,G,M>0$,
    $Y\in(0,2)$},\qquad [\text{CGMY model in Finance}]\\
    &\mathcal L_4= \mathcal L + L \quad\text{where $\mathcal L$
      satisfy \ref{L2} and $L$ is any other L\'evy operator.}
  \end{align*}
   We can even take $L$ to be any local L\'evy
      operator (e.g. $\Delta$) if we relax the definition of $\mathcal
      L_i$ to $\mathcal L_i u(x)=\mathrm{tr}[a_i D^2u]+b_i\cdot Du +\int_{Z_i}
      u(x+z)-u(x)-Du(x)\cdot z 1_{|z|<1}\,d\mu_i(s)$ for $a_i\geq0$.
  \end{example}

\begin{remark}(a) \ref{L2} holds also for very non-symmetric
  operators where $\mu$ has support in a cone in $\R^d$. Examples are
  Riesz-Feller operators like 
  $$\mathcal L_3u(x)=
  \int_{z>0}u(x+z)-u(x)-u'(x)z1_{z<1}\frac{dz}{z^{1+\alpha}},\quad \alpha\in(0,2).$$
  We refer to \cite{AK15} for results and discussion, see e.g. Lemma 2.1 (G7) and Proposition 2.3. 
  \smallskip

  \noindent (b) More general conditions implying \ref{L2} can be derived
  from the very general results on derivatives of heat semigroups in
  \cite{SSW12} and heat kernels in \cite{grzywny2017estimates}. Such
  conditions could include more non absolutely continuous and
  non-symmetric L\'evy measures.
  \end{remark}

\noindent We will now prove Theorem \ref{L_heat_kernel_estimate} and start by
proving the result for $\tilde{K}_{\sigma}$, the kernel of $\mathcal{L}_{s}$.
\begin{lemma}     \label{singular_heat_kernel_estimate}
Assume \ref{L1} and \ref{L2'}. Then $\tilde{K}_{\sigma} \in C^{\infty}$, and
for all $\beta\in\N_o^d$ and  $p\in[1,\infty)$, there is $c>0$ such that
$$\| D_x^{\beta} \tilde{K}_{\sigma}(\cdot,t) \|_{L^p (\R^d)} \leq c
t^{-\frac{1}{\sigma}\big(|\beta|+(1-\frac1p)d\big)}\qquad\text{for all}\qquad
t>0.$$
\end{lemma}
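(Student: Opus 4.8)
The plan is to argue entirely on the Fourier side, starting from $\tilde K_\sigma(t,\cdot)=\mathcal F^{-1}\big(e^{t\hat{\mathcal L}_s(\cdot)}\big)$ and the symbol bound recorded above: under \ref{L2'} one has $\text{Re}\,\hat{\mathcal L}_s(\xi)\le -c|\xi|^\sigma$ for $|\xi|\ge1$, while $\text{Re}\,\hat{\mathcal L}_s\le0$ on all of $\R^d$ since $\mu\ge0$. First I would observe that $\mu 1_{|z|<1}$ is a finite measure with $\int_{|z|<1}|z|^j\,d\mu<\infty$ for every $j\ge2$ (here $\sigma<2$ enters), so one may differentiate under the integral sign in the formula for $\hat{\mathcal L}_s$ (cf. \eqref{Fourier_symbol}) to get $\hat{\mathcal L}_s\in C^\infty(\R^d)$ with $|D_\xi^\gamma\hat{\mathcal L}_s(\xi)|\le C_\gamma(1+|\xi|)^\sigma$ for all $\gamma\in\N_0^d$. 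Combined with $|e^{t\hat{\mathcal L}_s(\xi)}|=e^{t\,\text{Re}\,\hat{\mathcal L}_s(\xi)}\le e^{-ct|\xi|^\sigma}$ for $|\xi|\ge1$, the Leibniz and chain rules then show that $(i\xi)^\beta e^{t\hat{\mathcal L}_s(\xi)}$ and all of its $\xi$-derivatives are in $L^1(\R^d)$ for every $\beta$ and $t>0$. Hence $\tilde K_\sigma\in C^\infty$, differentiation under the integral is legitimate, $D_x^\beta\tilde K_\sigma(t,x)=\mathcal F^{-1}\big((i\xi)^\beta e^{t\hat{\mathcal L}_s}\big)(x)$, and multiplication by powers of $x$ (equivalently applying $(-\Delta_\xi)^N$ on the Fourier side) shows $\tilde K_\sigma(t,\cdot)$ and all its derivatives decay faster than any polynomial. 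This settles the smoothness claim and fixes the representation we estimate.

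The central step is a pair of pointwise heat-kernel bounds for $t\in(0,T)$ (constants depending on $T$):
$$\|D_x^\beta\tilde K_\sigma(t)\|_{L^\infty}\le C\,t^{-\frac1\sigma(|\beta|+d)},\qquad |D_x^\beta\tilde K_\sigma(t,x)|\le C\,t\,|x|^{-(d+\sigma+|\beta|)}\quad(x\neq0).$$
The first is immediate: $\|D_x^\beta\tilde K_\sigma(t)\|_{L^\infty}\le(2\pi)^{-d}\int_{\R^d}|\xi|^{|\beta|}e^{t\,\text{Re}\,\hat{\mathcal L}_s(\xi)}\,d\xi$, and splitting at $|\xi|=1$ the low-frequency part is $\le C$ while the high-frequency part is $\le\int_{|\xi|\ge1}|\xi|^{|\beta|}e^{-ct|\xi|^\sigma}\,d\xi=C t^{-\frac1\sigma(|\beta|+d)}$ after the scaling $\xi=t^{-1/\sigma}\eta$. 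The off-diagonal bound is the real work, and is where I expect the paper to invoke the heat-kernel estimates for stable-like L\'evy operators of \cite{grzywny2017estimates}: assumption \ref{L2'} forces $\text{Re}\,\hat{\mathcal L}_s$ to satisfy the required weak scaling/comparability conditions ($\text{Re}\,\hat{\mathcal L}_s(\xi)\asymp|\xi|^\sigma$ for $|\xi|\ge1$, with the upper bound in \ref{L2'} dominating the imaginary part), so those results apply — possibly after the minor extensions advertised in the introduction — and yield $|D_x^\beta\tilde K_\sigma(t,x)|\le C\min\{t^{-\frac1\sigma(d+|\beta|)},\,t\,|x|^{-(d+\sigma+|\beta|)}\}$ on $(0,T)\times\R^d$. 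A self-contained alternative is a frequency-localised integration-by-parts argument for the oscillatory integral defining $D_x^\beta\tilde K_\sigma$; this is delicate, since a crude one-shot integration by parts loses the correct power of $t$, so one must localise to dyadic scales $|\xi|\sim 2^k$, treat $|\xi|\le t^{-1/\sigma}$ and $|\xi|>t^{-1/\sigma}$ separately, and optimise the number of integrations by parts as a function of $|x|t^{-1/\sigma}$.

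Finally, with the two-sided pointwise bound the $L^p$ estimates follow by direct integration, with no interpolation needed. Cutting $\R^d$ at the natural scale $|x|=t^{1/\sigma}$,
$$\|D_x^\beta\tilde K_\sigma(t)\|_{L^p}^p\le\int_{|x|<t^{1/\sigma}}\big(C t^{-\frac1\sigma(d+|\beta|)}\big)^p dx+\int_{|x|\ge t^{1/\sigma}}\big(C t|x|^{-(d+\sigma+|\beta|)}\big)^p dx,$$
and both integrals equal $C t^{\frac d\sigma(1-p)-\frac p\sigma|\beta|}$ (the second converges since $p(d+\sigma+|\beta|)>d$ for $p\ge1$); taking $p$-th roots gives exactly $\|D_x^\beta\tilde K_\sigma(t)\|_{L^p}\le C t^{-\frac1\sigma(|\beta|+(1-\frac1p)d)}$, with $p=\infty$ being the first pointwise bound itself. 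Thus the whole lemma hinges on the off-diagonal heat-kernel bound: once that is granted — from \cite{grzywny2017estimates} via \ref{L2'} — everything else is routine Fourier analysis and elementary integration.
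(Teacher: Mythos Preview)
Your proposal is correct and follows essentially the same route as the paper: obtain a pointwise heat-kernel bound from \cite{grzywny2017estimates} and then integrate over the regions $|x|\lessgtr t^{1/\sigma}$. The paper is more direct than your sketch: rather than handling smoothness and the on-diagonal $L^\infty$ bound separately via Fourier analysis, it simply verifies the hypotheses of Theorem~5.6 in \cite{grzywny2017estimates} (taking $\nu_0(|x|)=|x|^{-d-\sigma}\mathbf 1_{|x|<1}$ from \ref{L2'}, computing $h_0$, $h_0^{-1}$, and $K_0$ explicitly, and checking the scaling condition), applies that theorem to get the full pointwise bound $|\partial_x^\beta \tilde K_\sigma(t,x+\text{shift})|\le C[h_0^{-1}(1/t)]^{-|\beta|}Y_t(x)$ with $Y_t(x)\lesssim t^{-d/\sigma}\wedge t|x|^{-d-\sigma}$, and then integrates exactly as you do --- so your Fourier arguments become redundant, and note that the off-diagonal decay actually extracted is $t^{1-|\beta|/\sigma}|x|^{-d-\sigma}$ rather than your sharper $t|x|^{-(d+\sigma+|\beta|)}$, but both yield the same $L^p$ norm after cutting at $|x|=t^{1/\sigma}$.
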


\begin{remark}\
(a) When $p=1$, the bound simplifies to $\| D_x^{\beta}
\tilde{K}_{\sigma}(\cdot,t) \|_{L^1 (\R^d)} \leq c
t^{-\frac{|\beta|}{\sigma}}$.

\smallskip
\noindent (b) When $|\beta|=1$, the bound is locally integrable in $t$
when $1\leq p<p_0:=\frac{d}{1+d-\sigma}$. Note that $p_0>1$.
\end{remark}
\begin{proof}
    We verify the conditions of Theorem $5.6$ of
    \cite{grzywny2017estimates}. By \ref{L2'}, assumption ($5.5$) in
    \cite{grzywny2017estimates} holds with 
    \begin{align*}
     \nu_0 ( |x| ) = 
     \begin{cases}
         \frac{1}{|x|^{d+\sigma}}, & |x| < 1, \\ 
         0, & |x| \geq 0.
     \end{cases}
     \end{align*}
   Then we compute the integral $h_0$,
   \begin{align*}
       h_{0} ( r ) : = \int_{\R^d} 1 \wedge \frac{|x|^2}{r^2} \nu_0
       (|x|) \,dx =
       \begin{cases}
           c_{d} ( \frac{1}{2-\sigma} + \frac{1}{\sigma} ) r^{-\sigma} - \frac{c_{d}}{\sigma}, & r < 1, \\[0.2cm]
           c_{d} \frac{1}{2 - \sigma} r^{-2}, & r \geq 1,
       \end{cases}
   \end{align*}
where $c_d$ is the area of the unit sphere. Note that $h_0$ is
positive,
strictly decreasing, and that $ h_0 (r) \leq \lambda^{\sigma} h_{0} (
\lambda r )$ for $0 < \lambda \leq 1 $ and every $r>0$. Hence the scaling
condition (5.6) in \cite{grzywny2017estimates} holds with $C_{h_0}=1$ for
any $\theta_{h_0}>0$. The inverse is given by 
\begin{align*}
    h_0^{-1} (\rho) = 
    \begin{cases}
        \bigg( \frac{(2-\sigma) \rho}{c_{d}} \bigg)^{-\frac{1}{2}}, & \rho \leq \frac{c_{d}}{2-\sigma}, \\
        \bigg( \frac{\rho}{c_{d}} + \frac{1}{\sigma} \bigg)^{-\frac{1}{\sigma}} \bigg( \frac{\sigma ( 2 - \sigma )}{2} \bigg)^{- \frac{1}{\sigma}}, & \rho \geq \frac{c_{d}}{2-\sigma} .
    \end{cases}
\end{align*}
In both cases $t \leq ( 2-\sigma ) / c_{d}$ and $t \geq ( 2-\sigma ) /
c_{d}$, we then find that
$h_0^{-1} (1 / t ) \leq (\tilde{c} t)^{1 / \sigma}$,
where $\tilde{c}$ only depends on $\sigma$ and $d$.

At this point we can use Theorem $5.6$ in \cite{grzywny2017estimates}
to get the following heat kernel bound:
\begin{align*}
    \big| \partial_x^{\beta} p (t,x+tb_{ [h_0^{-1} (1/t) ]} ) \big|
    \leq C_0 [h_0^{-1} (1/t) ]^{-|\beta|} Y_t (x) = C_{0,\sigma} t^{-
      \frac{|\beta|}{\sigma}} Y_t (x),
\end{align*}
for any $t>0$, where $b_r$ does not depend on $x$,
\begin{align*}
    Y_t (x) =  [h_0^{-1} (1/t)]^{-d} \wedge \frac{tK_0 (|x|)}{|x|^d},
\end{align*}
and 
\begin{align*}
    K_{0} ( r ) & := r^{-2} \int_{|x| < r} |x|^{2} \nu_{0} ( |x| ) dx= \frac{c_{d}}{2-\sigma} \cdot
    \left\{\begin{array}{l}
        .\!\! r^{-\sigma} , \ \ r<1 \\[0.2cm]
        \!\! r^{-2}, \ \  r \geq 1
    \end{array}\right\} \leq \frac{c_{d}}{2-\sigma} r^{-\sigma}.
\end{align*}
An integration in $x$ then yields for $p\in[1,\infty)$,
\begin{align}\label{p-Lp}
    \| \partial_x^{\beta} p (t, \cdot) \|_{L^p (\R^d)}^p \leq
    C_{0,\sigma}^p \tilde{c}^p t^{- \frac{p|\beta|}{\sigma}} \int_{R^d} Y_t (x)^p \,dx.
\end{align}

We compute $\|Y_t\|_{L^p (\R^d)}$. Since $h_0^{-1} (1/t)   \leq \tilde{c} t^{1/\sigma}$ and $K_0 (r) \leq \frac{c_{d}}{2- \sigma} r^{- \sigma} $, we can compute the minimum to find a constant $c_{\sigma, d} >0$ such that 
\begin{align*}
   0\leq Y_t (x) \leq \begin{cases}
        (\tilde ct)^{-d/\sigma} , & \text{ for $|x| < c_{\sigma, d} t^{1/\sigma}$} \\
        \frac{c_d}{2-\sigma} \frac{t}{|x|^{d+\sigma}}, & \text{ otherwise.}
    \end{cases}
\end{align*}
A direct computation then shows that 
$$\int_{\R^d} Y_t (x)^p \,dx \leq  c_{d, \sigma,p} t^{-\frac{(p-1)d}{\sigma}},$$
where $c_{d, \sigma,p}>0$ is a constant not depending on
$t$. Combining this estimate with \eqref{p-Lp} concludes the proof of
the Lemma. 
\end{proof}

\begin{proof}[Proof of Theorem \ref{L_heat_kernel_estimate}]
  result for $K_\sigma$ follows by Lemma
  \ref{singular_heat_kernel_estimate} and a simple computation:
    \begin{align*}
        \big\|D_x^{\beta} K_{\sigma} \big\|_{L^p} &= \big\|D_x^{\beta}
        \mathcal{F}^{-1} \big( e^{t \hat{\mathcal{L}}_s}e^{t
          \hat{\mathcal{L}}_n} \big)\big\|_{L^p}= \big\| \Big(
        D_x^{\beta} \mathcal{F}^{-1} \big( e^{t \hat{\mathcal{L}}_s}
        \big) \Big) \ast \mathcal{F}^{-1} \big( e^{t
          \hat{\mathcal{L}}_n} \big) \big\|_{L^p} \\
        &\leq \|D_x^{\beta} \mathcal{F}^{-1} \big( e^{t
          \hat{\mathcal{L}}_s} \big) \|_{L^p} \int_{\R^d}
        \mathcal{F}^{-1} \big( e^{t \hat{\mathcal{L}}_n} \big) \leq
        ct^{-\frac{1}{\sigma}\big(|\beta|+(1-\frac1p)d\big)}\cdot 1.
    \end{align*}
The last integral is 1 since $\mathcal{F}^{-1} \big( e^{t
  \hat{\mathcal{L}}_n} \big)$ is a probability by
e.g.~Theorem 1.2.14 in \cite{applebaum2009levy}. Since
$\mathcal L^*$ is an operator of the same type as $\mathcal L$ with a
L\'evy measure $\mu^*$ also satisfying \ref{L1} and \ref{L2'} (cf. Lemma
\ref{adjoint_operator}), the
computations above show that $K^*_\sigma$ also
satisfy the same bound as $K_\sigma$.
\end{proof}
\begin{remark}\label{rem:smoothing}
  From this proof it follows that the smoothing properties of $\LL$ and
  $K_\sigma$ are independent of $\hat\LL_n$ and then also $\mu 1_{|z|>1}$.
  \end{remark}

By interpolation we obtain estimates for fractional derivatives of the heat kernel.
\begin{proposition}    \label{kernel_estimates}
  Assume \ref{L1}, \ref{L2}, $t\in[0,T]$, $s,\sigma \in \left( 0,2 \right)$, and $|D|^s:= \left(
    -\Delta \right)^{s/2}$. Then
    \begin{align*}
        \| |D|^{s} K_{\sigma} \left( t \right) \|_{L^1 \left( \R^d \right)} &\leq c t^{-\frac{s}{\sigma}},
    \end{align*}
    and if $s \in \left( 0,1 \right)$, then
    \begin{align*}
        \| |D|^s  \partial_x K_{\sigma} \left( t \right) \|_{L^1 \left( \R^d \right)} &\leq c t^{-\frac{s+1}{\sigma}}. 
    \end{align*}
\end{proposition}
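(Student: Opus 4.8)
The plan is to read off the fractional estimates from the integer-order heat kernel bounds already in hand. Assumption \ref{L2}(ii) with $p=1$ gives $\|D^{\beta}K_{\sigma}(t,\cdot)\|_{L^{1}(\R^{d})}\le \mathcal K\,t^{-|\beta|/\sigma}$ for every multi-index $\beta$ and $t\in(0,T)$, and $K_{\sigma}(t,\cdot)$ is a probability density so $\|K_{\sigma}(t,\cdot)\|_{L^{1}}=1$. The fractional bounds then follow by interpolating between consecutive integer orders, which I would carry out by hand using the singular integral representation of $(-\Delta)^{s/2}=|D|^{s}$ together with a dyadic splitting of the integration variable at the scale $t^{1/\sigma}$.

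Concretely, since by Theorem \ref{L_heat_kernel_estimate} (or \ref{L2}(ii) and Sobolev embedding) $K_{\sigma}(t,\cdot)\in C_{b}^{\infty}(\R^{d})$ with all derivatives in $L^{1}(\R^{d})$, the representations
\[
  |D|^{s}K_{\sigma}(t,x)=c_{d,s}\int_{\R^{d}}\frac{K_{\sigma}(t,x)-K_{\sigma}(t,x+y)}{|y|^{d+s}}\,dy\qquad (0<s<1)
\]
and
\[
  |D|^{s}K_{\sigma}(t,x)=-\frac{c_{d,s}}{2}\int_{\R^{d}}\frac{K_{\sigma}(t,x+y)+K_{\sigma}(t,x-y)-2K_{\sigma}(t,x)}{|y|^{d+s}}\,dy\qquad (1\le s<2)
\]
hold pointwise with absolutely convergent integrals. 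Taking the $L^{1}$-norm in $x$ and interchanging integrals by Tonelli, it suffices to bound $\int_{\R^{d}}|y|^{-d-s}\,\delta(t,y)\,dy$, where $\delta(t,y)$ is the $L^{1}_{x}$-norm of the first (resp.\ symmetric second) difference of $K_{\sigma}(t,\cdot)$ by the step $y$.

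The decisive input is a two-sided control of these difference norms: by the triangle inequality $\delta(t,y)\le 2$ (resp.\ $\le 4$) using $\|K_{\sigma}(t,\cdot)\|_{L^{1}}=1$; and by first- (resp.\ second-) order Taylor expansion together with Minkowski's integral inequality, $\delta(t,y)\le |y|\,\|DK_{\sigma}(t,\cdot)\|_{L^{1}}\le |y|\,\mathcal K t^{-1/\sigma}$ (resp.\ $\delta(t,y)\le |y|^{2}\,\|D^{2}K_{\sigma}(t,\cdot)\|_{L^{1}}\le |y|^{2}\,\mathcal K t^{-2/\sigma}$). Splitting the $y$-integral over $\{|y|<t^{1/\sigma}\}$ and $\{|y|\ge t^{1/\sigma}\}$, using the Taylor bound on the first set (integrable there precisely because $s<1$, resp.\ $s<2$) and the trivial bound on the second, each contribution is $\le c\,t^{-s/\sigma}$, which proves the first estimate. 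For the second estimate I would rerun the $0<s<1$ argument verbatim with $\partial_{x}K_{\sigma}(t,\cdot)$ in place of $K_{\sigma}(t,\cdot)$, now invoking $\|\partial_{x}K_{\sigma}(t,\cdot)\|_{L^{1}}\le \mathcal K t^{-1/\sigma}$ and $\|D\partial_{x}K_{\sigma}(t,\cdot)\|_{L^{1}}\le \mathcal K t^{-2/\sigma}$ from \ref{L2}(ii); the same splitting at $t^{1/\sigma}$ yields $c\,t^{-(s+1)/\sigma}$.

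No step is genuinely hard; the only points deserving a word of care are the validity of the singular integral representations (their equality with the Fourier multiplier $|\xi|^{s}$, standard given the regularity and integrability of $K_{\sigma}(t,\cdot)$) and the application of Tonelli's theorem, which is legitimate since the resulting $y$-integral is finite. The constant $c$ depends only on $d,s,\sigma,\mathcal K$ and, through \ref{L2}(ii), on $T$.
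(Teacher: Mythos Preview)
Your proof is correct and follows essentially the same idea as the paper's: interpolate the fractional operator $|D|^s$ between the integer-order $L^1$ bounds on $K_\sigma$, $DK_\sigma$, $D^2K_\sigma$ supplied by \ref{L2}(ii). The paper compresses the argument by quoting the ready-made interpolation inequalities $\||D|^s u\|_{L^1}\le C\|D^2u\|_{L^1}^{s/2}\|u\|_{L^1}^{1-s/2}$ and $\||D|^s u\|_{L^1}\le C\|Du\|_{L^1}^{s}\|u\|_{L^1}^{1-s}$ from Remark \ref{fractional_laplace_estimate} (which were themselves obtained by exactly your dyadic splitting of the L\'evy integral at a free scale $r$, then optimising in $r$), whereas you unroll the same splitting directly on the singular integral representation of $(-\Delta)^{s/2}$ with the scale fixed at $r=t^{1/\sigma}$; the two routes are equivalent.
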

\begin{proof}
    By Remark
    \ref{fractional_laplace_estimate} (a) with $p=1$ and \ref{L2}, we find that
    \begin{align*}
        \int ||D|^s K^{\sigma} (t)| \,dx &\leq c \|D^2 K^{\sigma} (t) \|_{L^1}^{\frac{s}{2}} \|K^{\sigma} \|_{L^1}^{1-\frac{s}{2}} \leq \big( ct^{-\frac{2}{\sigma}} \big)^{s/2}  1^{1-s/2} \leq ct^{-\frac{s}{\sigma}}.
    \end{align*} 
The proof of the second part follows in a similar way from Remark
    \ref{fractional_laplace_estimate} (b).
\end{proof}

\section{Fractional Hamilton-Jacobi-Bellman equations}
\label{sec:fracHJB}
Here we prove regularity and well-posedness for solutions of the fractional
Hamilton-Jacobi equation. In our proof we use heat kernel estimates
(Section \ref{sec:fracHK}), a Duhamel formula, and a fixed point
argument as in \cite{imbert2005non, droniou2003global}. The fractional
Hamilton-Jacobi equation is given by 
\begin{align}
   \left\{
\begin{array}{ll}
    \partial_t u - \mathcal{L} u + H \left( x,u, Du \right) = f \left( t,x \right) & \text{ in } \   \left(0,T \right) \times \R^d, \\
  u \left( 0,x \right) = u_0 \left( x \right) & \text{ in } \R^d,
\end{array}  
\right. 
\label{fractal_HJ}
\end{align}
where $f$ is the source term and $u_{0}$ initial condition. We assume
\smallskip

\begin{description}
    \item[(B1)\label{B1}]  $u_{0} \in C_{b} ( \R^d  )$ and $f\in C_{b} ([0,T]\times \R^d  )$.
      \medskip
      
    \item[(B2)\label{B2}]  There is an $L>0$ such that for
      all $x,y \in \R^d$, $t \in  [ 0,T]$,
        $$| f ( t,x ) -
      f ( t,y ) | + |u_0(x)-u_0(y)| \leq L|x-y|.$$
\end{description}
\smallskip

Assumptions \ref{L1}, \ref{A3}--\ref{A5}, \ref{B1}--\ref{B2} implies
that there exists a bounded 
$x$-Lipschitz continuous viscosity solution $u$ of (\ref{fractal_HJ})
(cf. e.g \cite{jakobsen2005continuous, jakobsen2006maximum,BaIm2008,imbert2005non}).

\begin{thm}[Comparison principle]  \label{comp_visc_sol}
    Assume \ref{L1}, \ref{A3}--\ref{A5}, \ref{B1}--\ref{B2} and
   $u,v$ are viscosity sub- and supersolutions of
  (\ref{fractal_HJ}) with bounded continuous initial data
  $u_0,v_0$.  If $u_0\leq v_0$ in $\R^d$, then $u\leq v$ in $\R^d\times (0,T)$. 
\end{thm}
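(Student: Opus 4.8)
The plan is to prove the comparison principle via the standard doubling-of-variables technique adapted to nonlocal operators, using the structure conditions \ref{A3}--\ref{A5}. The main steps are as follows. First I would reduce to a strict subsolution by the usual substitution: set $\tilde u(t,x) = u(t,x) - \varepsilon(1+t)$ (or $u(t,x)e^{-\lambda t}$ combined with an additive perturbation to absorb the sign of $\gamma$ from \ref{A5}), so that $\tilde u$ is a strict subsolution and it suffices to show $\tilde u \le v$ and then let $\varepsilon \to 0$. Since the problem is on the whole space, I would also need a localization device: either use the boundedness of $u,v$ together with a penalization term like $\delta\langle x\rangle$ (or $\delta(1+|x|^2)^{1/2}$, which has bounded gradient and whose nonlocal operator value is controlled via \ref{L1}), or appeal to a known Lipschitz bound to make the supremum attained. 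The point of boundedness is that $\sup(\tilde u - v)$, if positive, is essentially attained modulo the penalization.

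Next, suppose for contradiction that $M := \sup_{[0,T]\times\R^d}(\tilde u - v) > 0$. Introduce the doubled function
\begin{align*}
  \Phi_{\beta,\delta}(t,x,y) = \tilde u(t,x) - v(t,y) - \frac{|x-y|^2}{2\beta} - \delta\psi(x) - \delta\psi(y),
\end{align*}
where $\psi$ is the bounded-gradient growth function above, and let $(t_\beta,x_\beta,y_\beta)$ be a maximum point. Standard arguments give $|x_\beta - y_\beta|^2/\beta \to 0$ and $t_\beta$ bounded away from $0$ for $\beta,\delta$ small (using $M>0$ and $\tilde u(0,\cdot)\le v(0,\cdot)$). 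Then I would apply the nonlocal version of the Jensen--Ishii lemma / the theorem on sums for integro-PDEs (as in Barles--Imbert, cited in the excerpt) to produce matrices $X \le Y$ and the viscosity inequalities at the doubling point, crucially splitting the nonlocal operator $\mathcal L$ at a small radius $\kappa$: the near-field part is controlled by the second-order terms ($X\le Y$ plus the penalization contribution, which is $O(\delta)$ using \ref{L1}) and tends to $0$ with $\kappa$, while the far-field part is handled by plugging in the test function $|x-y|^2/(2\beta)$ directly, giving a contribution that vanishes as $\beta\to 0$ since $|x_\beta-y_\beta|\to 0$.

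Subtracting the two viscosity inequalities and using \ref{A5} to get $\gamma(\tilde u(t_\beta,x_\beta) - v(t_\beta,y_\beta)) \approx \gamma M$ on one side (this is why one first renormalizes to kill an unfavorable sign of $\gamma$), the remaining terms to estimate are the Hamiltonian differences $H(x_\beta, \tilde u(t_\beta,x_\beta), p_\beta) - H(y_\beta, v(t_\beta,y_\beta), p_\beta)$ with $p_\beta = (x_\beta-y_\beta)/\beta$. Here \ref{A4} gives $|H(x_\beta,\cdot,p_\beta)-H(y_\beta,\cdot,p_\beta)| \le C_R(|p_\beta|+1)|x_\beta-y_\beta| = C_R(|x_\beta-y_\beta|^2/\beta + |x_\beta-y_\beta|) \to 0$, and the $u$-dependence of $H$ is monotone via \ref{A5}; \ref{A3} supplies the local bound $C_R$ once one knows $p_\beta$ stays in a fixed ball $B_R$, which follows from the Lipschitz/boundedness reductions. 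Letting $\kappa\to 0$, then $\beta\to 0$, then $\delta\to 0$, the strict-subsolution gap $\varepsilon$ survives and contradicts $M>0$; hence $M\le 0$, i.e. $u\le v$.

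The main obstacle I anticipate is the rigorous handling of the nonlocal terms on the \emph{whole space} simultaneously with the penalization $\delta\psi$: one must check that $\mathcal L(\delta\psi)(x_\beta)$ is $o(1)$ as $\delta\to 0$ uniformly — this uses only \ref{L1} (splitting $|z|<1$ and $|z|\ge 1$, with $\|D^2\psi\|_\infty<\infty$ and $\psi$ bounded), but it interacts with the order of limits. A secondary technical point is justifying that the supremum-type quantities are attained (or approximately attained) and that $t_\beta$ does not escape to $0$; this is where one genuinely needs the a priori boundedness (and ideally Lipschitz regularity) of $u$ and $v$ asserted just before the theorem. Everything else is the by-now-standard nonlocal comparison machinery, so I would cite \cite{BaIm2008, jakobsen2006maximum} for the theorem-on-sums step rather than reproving it.
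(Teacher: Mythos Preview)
Your outline is essentially correct and follows the standard nonlocal comparison machinery that the paper itself references (Imbert, Barles--Imbert, Jakobsen--Karlsen). The one point of divergence is how the term $C_R|x_\beta-y_\beta|^2/\beta$ coming from \ref{A4} is disposed of when $u,v$ are merely bounded continuous. You rely on the standard doubling lemma to get $|x_\beta-y_\beta|^2/\beta \to 0$ as $\beta\to 0$ for fixed $\delta$; this is legitimate once the $\delta\psi$-penalization forces the maximizers into a compact set, so the Crandall--Ishii--Lions lemma applies to the penalized functions. The paper instead modifies the quadratic penalty to $\frac{e^{Ct}}{\varepsilon}|x-y|^2$: the time derivative of this test function contributes an extra $C\frac{e^{Ct}}{\varepsilon}|\bar x-\bar y|^2$ on the favorable side, which for $C$ large (depending on $C_R$) absorbs the \ref{A4} contribution \emph{before} any limit is taken, so one never needs $|\bar x-\bar y|^2/\varepsilon\to 0$. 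Both routes are valid; the paper's device is slightly more robust to order-of-limits bookkeeping and avoids invoking the doubling lemma on the penalized functions, while your route is the more commonly written one and makes the role of the localization $\delta\psi$ explicit.

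One minor correction: your remark that ``\ref{A3} supplies the local bound $C_R$ once one knows $p_\beta$ stays in a fixed ball $B_R$'' is unnecessary and in general false --- $p_\beta=(x_\beta-y_\beta)/\beta$ need not remain bounded as $\beta\to 0$. The constant $C_R$ in \ref{A4} depends only on a bound for the $u$-argument (here $R=\|u\|_\infty\vee\|v\|_\infty$), and \ref{A4} is stated for \emph{all} $p\in\R^d$, so no control on $p_\beta$ is required at that step.
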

\begin{proof}[Outline of proof]
If $u$ and $v$ are uniformly continuous, then the proof is essentially
the same as the proof of Theorem 2 in \cite{imbert2005non}.
When $u$ and $v$ are not uniformly continuous, the limit (13) in
\cite{imbert2005non} no longer holds because (in the notation of \cite{imbert2005non}) $\frac{|\bar x-\bar
  y|^2}{\varepsilon}\not\to 0$. However, this can
be fixed under our assumptions, loosely speaking because we 
can remove all $O(\frac{|\bar x-\bar
  y|^2}{\varepsilon})$-terms before taking limits by modifying the
test function. The modification consists in introducing an exponential
factor in the quadratic term: $\frac{e^{Ct}}{\varepsilon}|x-y|^2$ for
$C$ large enough. 
\end{proof}

\begin{remark}
We drop a complete proof here for two reasons: (i) it is long and
rather standard, and (ii) we only apply the result in cases where $u$
and $v$ are uniformly continuous and an argument like in
\cite{imbert2005non} is sufficient.
  \end{remark}

  \begin{thm}[Well-posedness] 
  Assume \ref{L1}, \ref{A3}--\ref{A5}, and \ref{B1}--\ref{B2}.
  \smallskip

  \noindent (a) There exists a (unique)
   bounded continuous viscosity solution $u$ of
  (\ref{fractal_HJ}) in $\left( 0,T \right) \times \R^d$ 
  such that $u \left( 0,x \right) = u_0 \left( x \right)$.
  \medskip
  
  \noindent (b) $\|u\|_\infty \leq \|u_0\|_\infty + C_0T$ where
  $C_0 := \| H ( \cdot, 0,0 )  \|_\infty + \| f  \|_\infty  $ is finite by \ref{A3} and \ref{B1}.
  \medskip

  \noindent (c) If also $u_0 \in W^{1,\infty} \left( \R^d \right)$, then
$$\|D u \left( t,\cdot \right)\|_{L^{\infty} \left( \R^d \right)} \leq
M_T,$$   
where $M_T = \mathrm{e}^{2C_R T} \left( \frac12C_R + T^2\| D_x f
  \|_{\infty}^2 + \| D u_0
  \|_{\infty}^2 \right)^{1/2}$, with $C_R$ from \ref{A4} and $R = \|u\|_\infty$. 
\label{existence_visc_sol}
\end{thm}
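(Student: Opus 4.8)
The plan is to obtain all three statements from the Duhamel (mild-solution) representation
$$u(t,x) = K_\sigma(t,\cdot)\ast u_0(x) - \int_0^t K_\sigma(t-s,\cdot)\ast\big[H(\cdot,u(s,\cdot),Du(s,\cdot)) - f(s,\cdot)\big]\,dx\,ds,$$
combined with a Banach fixed point argument in the spirit of \cite{imbert2005non,droniou2003global}. Existence of a bounded Lipschitz viscosity solution has already been recorded just before the statement, and uniqueness follows from the comparison principle (Theorem \ref{comp_visc_sol}), since under \ref{B1}--\ref{B2} the solution we construct is uniformly continuous; so for part (a) the main content is to check that the mild solution obtained by the fixed point argument coincides with the viscosity solution, which is standard (smooth approximation of the heat semigroup and passage to the limit, or a direct verification that the mild solution satisfies the equation pointwise once enough regularity is available from the bootstrap).

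\textbf{Part (b).} This is the simplest: apply the maximum principle. Either argue directly on the viscosity solution (the constant $\|u_0\|_\infty + C_0 t$ is a supersolution and $-\|u_0\|_\infty - C_0 t$ a subsolution, using \ref{A5} to absorb the zeroth-order term — note $\gamma$ may be negative but over the fixed interval $[0,T]$ the bound still closes after replacing $C_0$ by $C_0 + |\gamma|\,(\text{stuff})$, or more cleanly one uses the substitution $u \mapsto e^{-\gamma t}u$), or observe from the Duhamel formula that $\|K_\sigma(t)\ast u_0\|_\infty \le \|u_0\|_\infty$ since $K_\sigma(t)$ is a probability measure, and bound the integral term by $\int_0^t \|H(\cdot,u(s),Du(s)) - f(s)\|_\infty\,ds$; combining with \ref{A3} (which controls $H(\cdot,0,0)$) and \ref{A5}, a Gronwall argument gives the stated linear-in-$T$ bound. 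I would present the maximum principle version as it is cleanest.

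\textbf{Part (c).} This is the main obstacle. The strategy is the classical Bernstein-type / doubling-of-variables estimate for the Lipschitz seminorm, adapted to the nonlocal operator. Formally, set $w = \tfrac12|Du|^2$; differentiating the equation and using \ref{A3}--\ref{A4} gives $\partial_t w - \mathcal{L}w \le 2C_R w + (\text{terms from } D_x f \text{ and } D_x H)$, where the crucial point is that $\mathcal{L}(\tfrac12|Du|^2) \ge Du\cdot\mathcal{L}(Du)$ by convexity of $v\mapsto\tfrac12|v|^2$ together with $\mathcal{L}$ being a nonlocal operator of the form \eqref{Levy_operator} (the standard inequality $\mathcal{L}\varphi(u) \ge \varphi'(u)\mathcal{L}u$ for convex $\varphi$ — this uses $\mu\ge0$ and that the gradient-correction term is linear). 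Using \ref{A4} to get $|D_x H(x,u,Du)|\le C_R(|Du|+1)$ and Young's inequality to absorb, one lands on $\partial_t w - \mathcal{L}w \le 4C_R w + C_R^2/2 + \|D_x f\|_\infty^2/2 + \ldots$; integrating the ODE comparison $\frac{d}{dt}\|w(t)\|_\infty \le 4C_R\|w(t)\|_\infty + (\ldots)$ with the time-dependent source (the $T^2\|D_x f\|_\infty^2$ term reflects that one also controls the running supremum in time) yields exactly $\|Dw(t)\|_\infty \le M_T^2/2$, i.e. the claimed $M_T$. Since $u$ need not a priori be $C^2$ in $x$, this formal computation must be justified either (i) at the viscosity-solution level via doubling of variables — test $u(t,x) - u(t,y) - L(t)\phi(x-y)$ with $\phi$ a smoothed distance and show the multiplier $L(t)$ can be taken to satisfy the stated ODE, where the nonlocal term contributes a favorable sign because $\phi$ is concave near $0$ (this is where the argument genuinely differs from the local case and is the technical heart) — or (ii) via the regularized mild solution, where the bootstrap from the Duhamel formula already gives enough spatial smoothness to run the Bernstein argument rigorously and then pass to the limit using comparison. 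I would use approach (i), citing \cite{imbert2005non,BaIm2008} for the nonlocal doubling-of-variables machinery, with the new point being the explicit tracking of the constant $M_T$; the delicate estimate is controlling the nonlocal term $\mathcal{L}$ acting on the penalization in the doubled variables, which requires splitting into $|z|<r$ and $|z|\ge r$ and using \ref{L1} (no ellipticity or \ref{L3} is needed here, only that $\mathcal{L}$ has the Lévy form, consistent with the hypotheses of the theorem).
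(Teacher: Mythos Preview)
Your overall destinations for (b) and (c) are right and match the paper --- comparison for the $L^\infty$ bound, doubling of variables \`a la \cite{imbert2005non} for the Lipschitz bound --- but your top-level framing has a real problem. You propose to run everything through the Duhamel/mild-solution representation, and this is not available here: Theorem~\ref{existence_visc_sol} assumes only \ref{L1}, \emph{not} \ref{L2}. Without \ref{L2} you have no heat kernel estimates, no $L^1$ bounds on $D_xK_\sigma$, and the Banach fixed point argument on the Duhamel formula does not get off the ground. The heat-kernel machinery only enters later in the paper (Section~\ref{sec:fracHJB} onwards, where \ref{L2} is explicitly assumed) to upgrade the viscosity solution to a classical one. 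At the level of this theorem the argument must stay purely in the viscosity solutions framework.

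The paper's own proof is accordingly just three citations: (a) is Perron/viscosity existence as in Theorem~3 of \cite{imbert2005non} (existence had already been noted in the line preceding the theorem, citing \cite{jakobsen2005continuous,jakobsen2006maximum,BaIm2008,imbert2005non}); (b) is comparison (Theorem~\ref{comp_visc_sol}); and (c) is the doubling-of-variables Lipschitz estimate as in Lemma~2 of \cite{imbert2005non}. Your ``approach (i)'' for (c) --- test $u(t,x)-u(t,y)-L(t)\phi(|x-y|)$ and track the evolution of the Lipschitz multiplier, handling the nonlocal term by the $|z|<r$ / $|z|\ge r$ split using only \ref{L1} --- is exactly this, and your remark that no ellipticity (\ref{L2} or \ref{L3}) is needed is correct and important. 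So drop the Duhamel scaffolding from the write-up and present (a), (b), (c) directly at the viscosity level; what you have for (b) (the clean maximum-principle version) and your option (i) for (c) are then essentially the paper's proof with more detail filled in.
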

\begin{proof}
The proof of (a) is quite standard and almost identical to the proof of
  Theorem 3 in \cite{imbert2005non}. Part (b) follows from comparison,
  Theorem \ref{comp_visc_sol}, and the proof of part (c) is similar to
  the proof of Lemma 2 in \cite{imbert2005non}.  
\end{proof}
Using parabolic regularity (in the form of \ref{L3}
\cite{barles2012lipschitz}) and the method of Ishii-Lions, it is 
possible to obtain Lipschitz bounds that only depend on the
$C_b$-norm of $f$:
 \begin{thm}\label{IshiiLions}
  Assume \ref{L1}, \ref{L3}, \ref{A3}--\ref{A5}, $f\in C_b([0,T]\times
  \R^d)$ and $u_0\in W^{1,\infty}(\R^d)$. Then the viscosity solution
  $u$  of  \eqref{fractal_HJ} is Lipschitz continuous in $x$ and there
  is a constant $M>0$ such that for all $t\in[0,T]$,
$$\| Du \left( t,\cdot \right)\|_{L^{\infty} \left( \R^d \right)} \leq
M,$$   
where $M$ depends on $\|u\|_\infty$, $\|f\|_\infty$, $d$, and the
quantities in \ref{A3}--\ref{A5}.
 \end{thm}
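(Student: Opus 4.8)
The plan is to establish the Lipschitz bound via the Ishii--Lions method combined with the nonlocal ellipticity encoded in assumption \ref{L3}. First I would set up the standard doubling-of-variables argument: fix a point and time of interest, and consider the auxiliary function $\Phi(x,y,t) = u(t,x) - u(t,y) - L\phi(|x-y|) - \text{(penalization terms)}$, where $\phi$ is a concave modulus to be chosen (e.g.\ $\phi(s) = s - s^{1+\gamma}$ near $0$, or $\phi(s) = Ms^{\theta}$-type functions used in \cite{barles2012lipschitz}), and $L$ is the Lipschitz constant we are trying to bound. Adding a small localization term $\eta(|x|^2 + |y|^2)$ and a time-penalization $\frac{\nu}{T-t}$ (or working with $e^{\lambda t}$ weights to absorb the zeroth-order terms via \ref{A5}) guarantees the supremum of $\Phi$ is attained at an interior point $(\bar x, \bar y, \bar t)$. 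If $\bar t = 0$ we are done by the Lipschitz regularity of $u_0$; otherwise we proceed at an interior maximum.

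Next I would apply the nonlocal parabolic Jensen--Ishii lemma (as in \cite{BaIm2008,jakobsen2006maximum,imbert2005non}) to obtain, at the maximum point, sub/super-jet inequalities for $u$ in $x$ and $y$ with a common ``second-order'' data and, crucially, \emph{nonlocal} terms involving $\mathcal L$ evaluated against the test function $\psi(x,y) = L\phi(|x-y|) + \eta(|x|^2+|y|^2)$. Subtracting the two viscosity inequalities, the Hamiltonian terms $H(\bar x, u(\bar x), p) - H(\bar y, u(\bar y), q)$ are controlled using \ref{A4} (the $x$-Lipschitz estimate giving a term $\lesssim C_R(|p|+1)|\bar x - \bar y|$, with $|p| \approx L\phi'(|\bar x-\bar y|)$) and \ref{A3}/\ref{A5} for the dependence on $u$; the source term $f$ contributes only through $\|f\|_\infty$ after the doubling in $x$, so $\|Df\|_\infty$ never enters. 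The gain must come from the nonlocal diffusion: the difference of the nonlocal terms tested against the concave function $\phi(|x-y|)$ is strictly negative and of order $L \cdot |\bar x - \bar y|^{-\text{(something)}}$ times the cone integral in \ref{L3}. This is precisely where \ref{L3} does its work --- integrating $|z|^2 \nu(dz)$ over the cone $\mathcal C_{\eta,r}(\bar x - \bar y)$ yields a negative contribution $\lesssim -L\,\eta^{(d-1)/2}|\bar x-\bar y|^{-\beta}\phi''$-type bound that, for $L$ chosen large depending on $\|u\|_\infty,\|f\|_\infty, d$ and the constants in \ref{A3}--\ref{A5}, dominates all the positive terms and forces a contradiction unless $|\bar x - \bar y|$ is already controlled.

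The main obstacle will be handling the nonlocal terms on the whole space carefully: unlike the local second-order case, the nonlocal operator sees the behavior of $u$ at all scales, so one must split the integral $\int_{|z|<\delta} + \int_{\delta \le |z| < 1} + \int_{|z|\ge 1}$. On the inner region one uses the $C^2$-regularity of the test function and the cone estimate \ref{L3} to extract the good negative term; on the outer regions one uses that $u$ is merely bounded (by Theorem \ref{existence_visc_sol}(b)) together with \ref{L1} to bound the contribution by $C\|u\|_\infty$, which is harmless. A delicate point is that the localization parameter $\eta$ and the penalization must be sent to zero \emph{after} the cone estimate is applied, and one must check the nonlocal term associated to $\eta(|x|^2+|y|^2)$ stays bounded as $\eta \to 0$; this works because $\int_{|z|<1}|z|^2\,d\mu < \infty$ and the bad far-field part is again absorbed by $\|u\|_\infty$. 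Once the contradiction is reached for $L$ large, we conclude $|u(t,x) - u(t,y)| \le M|x-y|$ with $M$ depending only on the asserted quantities, uniformly in $t \in [0,T]$; passing $\eta \to 0$ removes the localization and completes the proof. I would closely follow \cite{barles2012lipschitz}, adapting their argument (stated there for somewhat different equations) to the present parabolic setting with the Hamiltonian dependence on $u$ handled via \ref{A5}.
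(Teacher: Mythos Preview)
Your proposal is correct and follows essentially the same approach as the paper: the paper's proof is simply a citation to \cite{barles2012lipschitz} (Corollary 7 for the periodic case, with a remark that the whole-space extension is explained in their Section 5.1), noting that the time-dependent right-hand side $f$ requires no change. Your sketch is precisely a fleshed-out version of that cited argument, and in fact gives more detail than the paper itself.
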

 \begin{proof}
   In the periodic case this result is a direct consequence of
   Corollary 7 in \cite{barles2012lipschitz}. The original proof is
   for a right-hand side $f$ not depending on $t$. For contiuous $f =
   f ( x,t )$, the proof is exactly the same. The result also holds in
   the whole space case, and this is explained in section 5.1 (see
   Theorem 6 for the stationary case).
   \end{proof}
 Similar parabolic results for the whole
   space are also given in \cite{CLN19}. 
To have classical solutions we make further regularity assumptions on the data:

\smallskip
\begin{description}
    \item[(B3)\label{B3}] $\| f ( t,\cdot )  \|_{C_{b}^{2} ( \R^d  )}
      \leq C $ for all $t \in [ 0, T]$.

        \smallskip
    \item[(B4)\label{B4}]  $u_0 \in C_{b}^{3}( \R^d )$.
    \medskip
\end{description}
Note that $f$ needs less spatial regularity than $H$ in \ref{A3}.
\begin{thm}[Classical solutions]  \label{thm_HJ}
    Assume \ref{L1}--\ref{L2}, \ref{A3}--\ref{A5}, and \ref{B1}--\ref{B4}. Then
  \eqref{fractal_HJ} has a unique classical solution $u$ such that
      $\partial_t u , u, Du, \cdots, D^{3} u \in C_b \big( ( 0,T )
  \times \R^d \big)$ with
  \begin{align*}
      \|\partial_{t} u \|_{L^{\infty}} + \|u \|_{L^{\infty}} + \|D u \|_{L^{\infty}} + \ldots + \|D^{3} u \|_{L^{\infty}} \leq c,
  \end{align*}
  where $c$ is a constant depending only on $\sigma$, $T$, $d$, and
  quantities from \ref{L1}--\ref{B4}.
\end{thm}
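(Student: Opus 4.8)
The plan is to construct $u$ from the mild (Duhamel) formulation together with a Banach fixed point argument, and then to bootstrap its regularity using the heat kernel bounds of Section~\ref{sec:fracHK}, in the spirit of \cite{imbert2005non,droniou2003global}. With the kernel $K_\sigma$ from \eqref{heat_kernel_def} and Proposition~\ref{heat_eqn_cauchy}, a solution of \eqref{fractal_HJ} should solve
\[
u(t)=K_\sigma(t)\ast u_0+\int_0^t K_\sigma(t-s)\ast g[u](s)\,ds,\qquad g[u](s,x):=f(s,x)-H\big(x,u(s,x),Du(s,x)\big).
\]
I would first record the a priori facts already available: by Theorem~\ref{existence_visc_sol} there is a unique bounded, $x$-Lipschitz viscosity solution with $\|u\|_\infty\le R$ and $\sup_{t\le T}\|Du(t,\cdot)\|_\infty\le M_T$, where $R,M_T$ depend only on the data through \ref{A3}--\ref{A5}, \ref{B1}--\ref{B3}; the value $R$ fixes the constant in \ref{A3} used below, and by Theorem~\ref{comp_visc_sol} any classical solution coincides with this viscosity solution, giving uniqueness immediately and also propagating these two bounds to every time subinterval.

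For local existence I would run a contraction for the map $\Phi$ above in $E_\tau:=C([0,\tau];C_b^3(\R^d))$ on a ball whose radius $M$ is fixed from the data. The mechanism is a careful distribution of spatial derivatives. In the first term all derivatives go onto $u_0$: $D^\beta(K_\sigma(t)\ast u_0)=K_\sigma(t)\ast D^\beta u_0$, bounded by $\|D^\beta u_0\|_\infty$ since $\|K_\sigma(t)\|_{L^1}=1$ and \ref{B4} holds. In the Duhamel term I would always keep \emph{at most one} derivative on the kernel: for $D^\beta=D_i D^{\beta'}$ with $|\beta'|=|\beta|-1$,
\[
D^\beta\!\int_0^t K_\sigma(t-s)\ast g[u](s)\,ds=\int_0^t D_iK_\sigma(t-s)\ast D^{\beta'}g[u](s)\,ds,
\]
and $\|D K_\sigma(\tau)\|_{L^1(\R^d)}\le \mathcal K\,\tau^{-1/\sigma}$ (by \ref{L2} with $p=1$) plus Young's inequality bound this by $c\int_0^t(t-s)^{-1/\sigma}\|D^{\beta'}g[u](s)\|_\infty\,ds$, where $(t-s)^{-1/\sigma}$ is integrable \emph{because $\sigma>1$}. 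By the chain rule $\|D^k g[u](s)\|_\infty\le C\big(1+\|u(s)\|_{C_b^{k+1}}+\|u(s)\|_{C_b^{k+1}}^2\big)$ for $k\le2$, the quadratic term ($\sim D^2_{pp}H:(D^2u)^{\otimes2}$) being genuinely present only at $k=2$, and here one uses \ref{A3} with $|\alpha|\le3$ and $\|f(s,\cdot)\|_{C_b^2}\le C$ from \ref{B3}. Estimating $D^\beta(g[u_1]-g[u_2])$ the same way, using that $H$ and its derivatives up to order $3$ are locally Lipschitz in $(u,p)$ by \ref{A3}, gives $\|\Phi[u_1]-\Phi[u_2]\|_{E_\tau}\le C_M(\tau+\tau^{1-1/\sigma})\|u_1-u_2\|_{E_\tau}$ and a matching self-map estimate, so $\Phi$ is a contraction for $\tau=\tau(\text{data})$ small. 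This yields $u\in C([0,\tau];C_b^3)$; then $\mathcal Lu\in C_b$ by Lemma~\ref{L_p_bounds_eq} and hence $\partial_tu\in C_b$ from \eqref{fractal_HJ}, so $u$ is a classical solution on $(0,\tau)\times\R^d$.

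To globalize and extract the uniform bound, I would extend $u$ to its maximal interval of $C_b^3$-regularity and, on that interval, rerun the Duhamel estimates above over the full interval $[0,t]$, now inserting $\|u(s)\|_\infty\le R$ and $\|Du(s)\|_\infty\le M_T$, to obtain
\[
\|D^2u(t)\|_\infty+\|D^3u(t)\|_\infty\le C+c\int_0^t(t-s)^{-1/\sigma}\Big(1+\|D^2u(s)\|_\infty+\|D^2u(s)\|_\infty^2+\|D^3u(s)\|_\infty\Big)\,ds .
\]
A singular (fractional) Gronwall lemma closes this into a bound depending only on $\sigma,T,d$ and the quantities in \ref{L1}--\ref{B4}. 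Since the local existence time is bounded below in terms of the now-controlled $C_b^3$-norm, a standard continuation argument forces the maximal interval to be $[0,T]$; combined with Theorem~\ref{existence_visc_sol}(b),(c) and \eqref{fractal_HJ} this gives the stated estimate on $\partial_tu,u,Du,\dots,D^3u$. Uniqueness is immediate since a classical solution is a viscosity sub- and supersolution, so Theorem~\ref{comp_visc_sol} applies in both directions.

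The main obstacle is the top-order bootstrap. Structurally, putting two derivatives on the kernel would produce the weight $(t-s)^{-2/\sigma}$, which is \emph{not} integrable since $2/\sigma>1$; one is therefore forced to move all but one derivative onto the nonlinear source $g[u]=f-H(\cdot,u,Du)$, which is precisely why the hypotheses demand $u_0\in C_b^3$, $f\in C_b^2$, and three bounded derivatives of $H$. The price is that the $D^3u$-estimate is genuinely nonlinear and self-referential — it sees $D^3u$ (linearly, absorbed by Gronwall) and $(D^2u)^{\otimes2}$ — so the bounds must be closed in the order $D^2u$ then $D^3u$, after which the quadratic term is only a bounded coefficient. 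Making this precise — running the small-time fixed point and the singular Gronwall while tracking which derivative sits on the kernel, and checking that the quadratic term spoils neither integrability nor the Gronwall closure — is where essentially all the work lies; the remaining verifications (initial data attained via $K_\sigma(t)\ast u_0\to u_0$, continuity of $\mathcal Lu$ and $\partial_tu$) are routine adaptations of \cite{imbert2005non,droniou2003global}.
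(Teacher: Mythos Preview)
Your proposal is correct and follows a genuinely different globalization strategy from the paper. The paper does not use a singular Gronwall argument at all; instead it relies on Proposition~\ref{duhamel_regularity}(a), which runs the fixed point in \emph{weighted} spaces with norm $\|v\|_{k-1}+\|t^{1/\sigma}D^k v\|_\infty$, so that one can restart the Duhamel construction at any intermediate time $t_0$ using only the $W^{1,\infty}$ data supplied by the viscosity theory (Theorem~\ref{existence_visc_sol}(c)). From $W^{1,\infty}$ one first gets $D^2u\in C_b$ after a short positive time (the weight absorbs the missing derivative), then restarts again to get $D^3u\in C_b$. Because the restart length $\tilde T$ depends only on the uniform $W^{1,\infty}$ bound, one covers $[0,T]$ by finitely many overlapping intervals with uniform estimates on each, and no Gronwall is needed. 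Your route---local existence directly in $C([0,\tau];C_b^3)$ using $u_0\in C_b^3$, followed by a linear singular Gronwall first for $\|D^2u\|_\infty$ and then for $\|D^3u\|_\infty$ (after the quadratic $(D^2u)^{\otimes 2}$ term becomes a bounded coefficient), and finally a standard continuation---is cleaner for this theorem in isolation and avoids weighted norms entirely. The paper's approach, on the other hand, exhibits the parabolic smoothing effect (regularity improves from $W^{1,\infty}$ to $C_b^3$ for $t>0$), which is the mechanism reused for the Fokker--Planck equation in Section~\ref{sec:fracFP} and is why it was set up that way.
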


To have space-time uniform continuity (and compactness) of
derivatives, we assume: 
\smallskip

\begin{description}
    \item[(B5)\label{B5}] There is a modulus of continuity
      $\omega_{f}$ such that for all $x,y\in\R^d$, $t,s\in[0,T]$,
      $$| f ( s,x ) - f ( t,y ) | \leq \omega_{f} ( | s-t | + | x-y |).$$
\end{description} 
\smallskip

\begin{thm}[Uniform continuity]  \label{HJ_Holder_thm}
  Assume \ref{L1}--\ref{L2}, \ref{A3}--\ref{A5}, and \ref{B1}--\ref{B5}. Then the unique classical solution $u$ of \eqref{fractal_HJ} also satisfies 
\begin{align}
    &|u ( t,x ) - u ( s,y ) | + |Du ( t,x ) - Du ( s,y ) | +  |D^2u ( t,x ) - D^2u ( s,y ) | \nonumber \\ 
    &+ |\partial_t u(t,x)-\partial_t u(s,y)|+ |\mathcal L u(t,x)- \mathcal L u(s,y)| \leq \omega(|t-s|+|x-y|), 
\end{align}
where $\omega$ only depends on 
$\sigma$, $T$, $d$, and quantities from
\ref{L1}--\ref{B5}.
\end{thm}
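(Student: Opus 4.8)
The strategy is to derive everything from the uniform bounds of Theorem~\ref{thm_HJ}, namely $\|\partial_t u\|_\infty+\|u\|_\infty+\|Du\|_\infty+\|D^2u\|_\infty+\|D^3u\|_\infty\le c$: the spatial Lipschitz estimates these already contain, interpolation in $x$ for the intermediate derivatives, and the pointwise Taylor structure of $\mathcal L$ for the nonlocal term, with the equation \eqref{fractal_HJ} supplying the modulus for $\partial_t u$ at the end.

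First I would treat $u$ itself: $\|Du\|_\infty\le c$ makes $u(t,\cdot)$ $c$-Lipschitz, and $\|\partial_t u\|_\infty\le c$ gives $|u(t,x)-u(s,x)|=\big|\int_s^t\partial_\tau u(\tau,x)\,d\tau\big|\le c|t-s|$, so $u$ has the modulus $r\mapsto cr$. For $Du$ and $D^2u$ the spatial Lipschitz bound is immediate from $\|D^2u\|_\infty,\|D^3u\|_\infty\le c$; for time regularity I would apply the Landau--Kolmogorov/Gagliardo--Nirenberg inequality $\|D^j w\|_\infty\le C_d\|w\|_\infty^{1-j/3}\|D^3w\|_\infty^{j/3}$ ($j=1,2$) to $w:=u(t,\cdot)-u(s,\cdot)\in C_b^3$, for which $\|D^3w\|_\infty\le 2c$ and $\|w\|_\infty\le c|t-s|$ by the previous step. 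This yields $\|Du(t,\cdot)-Du(s,\cdot)\|_\infty\le C|t-s|^{2/3}$ and $\|D^2u(t,\cdot)-D^2u(s,\cdot)\|_\infty\le C|t-s|^{1/3}$.

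Next the nonlocal term. Splitting $\mathcal L=\mathcal L_s+\mathcal L_n$ for the pieces over $\{|z|<1\}$ and $\{|z|\ge1\}$ and using the integral remainder $\phi(y+z)-\phi(y)-D\phi(y)\cdot z=\int_0^1(1-\theta)D^2\phi(y+\theta z)[z,z]\,d\theta$ for $|z|<1$, one has for every $\phi\in C_b^3$ the pointwise bound $|\mathcal L\phi(y)|\le\frac12\|D^2\phi\|_\infty\int_{|z|<1}|z|^2\,d\mu(z)+2\|\phi\|_\infty\mu(B_1^c)$, both $\mu$-integrals being finite by \ref{L1}. Since $\mathcal L$ is linear and translation invariant, applying this to $\phi=u(t,\cdot)-u(s,\cdot)$ and to $\phi=u(t,\cdot+(x-y))-u(t,\cdot)$, and inserting the bounds from the previous paragraph together with $\|D^3u\|_\infty\le c$, gives $|\mathcal L u(t,x)-\mathcal L u(t,y)|\le C|x-y|$ and $|\mathcal L u(t,x)-\mathcal L u(s,x)|\le C|t-s|^{1/3}$. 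Finally $\partial_t u=\mathcal L u+f-H(\cdot,u,Du)$ by \eqref{fractal_HJ}, so the modulus of $\partial_t u$ in both variables follows from the one just obtained for $\mathcal L u$, from \ref{B5} and \ref{B2} for $f$, and from the local Lipschitz continuity of $H$ in all its arguments (assumptions \ref{A3}--\ref{A4} with $R=\|u\|_\infty\le c$) composed with the moduli of $u$ and $Du$. Summing the resulting five estimates and using $|\phi(t,x)-\phi(s,y)|\le|\phi(t,x)-\phi(t,y)|+|\phi(t,y)-\phi(s,y)|$ for each of the five functions produces a single modulus $\omega$ depending only on $\sigma$, $T$, $d$ and the constants in \ref{L1}--\ref{B5}.

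The genuine content here is Theorem~\ref{thm_HJ}; the only delicate point is the nonlocal term, for which a direct Duhamel estimate is awkward because $\|\mathcal L K_\sigma(\tau)\|_{L^1}\sim\tau^{-1}$ is not time-integrable near $0$. The Taylor-remainder argument above circumvents this by using the third-order bound on $u$ instead of differentiating the heat kernel further. (Alternatively, the whole statement can be run through the Duhamel representation $u(t)=K_\sigma(t)\ast u_0+\int_0^t K_\sigma(t-s)\ast[f(s,\cdot)-H(\cdot,u(s,\cdot),Du(s,\cdot))]\,ds$, moving at most one spatial derivative onto the kernel, estimating time increments via $K_\sigma(\tau+\delta)-K_\sigma(\tau)=\int_\tau^{\tau+\delta}\mathcal L K_\sigma(r)\,dr$, and using finiteness of $\|\mathcal L u_0\|_\infty$ and $\|\mathcal L(f(s,\cdot)-H(\cdot,u(s,\cdot),Du(s,\cdot)))\|_\infty$ from Lemma~\ref{L_p_bounds} with \ref{B3}--\ref{B4} and Theorem~\ref{thm_HJ}; this route also needs no moment assumption on $\mu$, which matters since $\mu$ may have no first moments.)
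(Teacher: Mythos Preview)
Your proof is correct, and it is close in spirit to the paper's but differs in one step. For the $\mathcal L u$ and $\partial_t u$ moduli you do exactly what the paper does: bound $\mathcal L u$ via the Taylor splitting of the operator applied to the already-controlled moduli of $u$ and $D^2u$, then read off $\partial_t u$ from the equation together with \ref{B5} and \ref{A3}--\ref{A4}. The difference is in how you obtain the \emph{time} regularity of $Du$ and $D^2u$. You use the multiplicative Landau--Kolmogorov/Gagliardo--Nirenberg inequality on $w=u(t,\cdot)-u(s,\cdot)$, feeding in $\|w\|_\infty\le c|t-s|$ and $\|D^3w\|_\infty\le 2c$, which gives $|t-s|^{2/3}$ for $Du$ and $|t-s|^{1/3}$ for $D^2u$. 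The paper instead mollifies $w=\partial^2_{x_ix_j}u$ in space, differentiates the equation to bound $\|\partial_t w^\epsilon\|_\infty\le \tilde c/\epsilon+K$ (the $1/\epsilon$ coming from $\mathcal L$ acting on one mollified derivative of $D^3u$), and optimizes $\epsilon$ to get $|t-s|^{1/2}$. Your route is more elementary and model-independent (no PDE is used until the $\partial_t u$ step), while the paper's mollification-plus-equation argument extracts a sharper H\"older exponent by exploiting the structure of \eqref{fractal_HJ}; since the statement only asks for \emph{some} modulus $\omega$, either suffices. Your closing remark about the non-integrability of $\|\mathcal L K_\sigma(\tau)\|_{L^1}$ near $\tau=0$ is also to the point: the paper's Lemma~\ref{DH-lem} handles exactly this obstacle for the Duhamel route, but for the theorem at hand both you and the paper avoid it by working pointwise with the Taylor remainder rather than differentiating the kernel.
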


\begin{remark} 
    Imbert shows in \cite{imbert2005non} that when $\mathcal{L} = -
    (-\Delta)^{\sigma/2}$, $f \equiv 0$, and $u_0 \in W^{1,\infty} (
    \R^d )$, there exists a classical solution $u$ such that
      $ \|u\|_{C_b} + \|Du\|_{C_b} + \|t^{1/\sigma}
    D^2 u\|_{C_b} \leq c$. He goes on to show that when $H=H(p)\in C^\infty$, then
    $u\in C^\infty$.
 In this paper we prove results for a much
    larger class of equations and nonlocal operators. Our results are
    also more precise: We need and prove time-space uniform continuity
    of all derivatives appearing in the equation, see Theorem
    \ref{HJ_Holder_thm}. To do we need a finer analysis of the
    regularity in time. A final difference is 
    that  our estimates do not blow up as $t\to 0^+$.
    Note that it is easy to adapt our proofs and obtain even higher order
    regularity, e.g. treat the case $H = H(x,u,p) \in C^{\infty}$.
\end{remark}

 To prove Theorem \ref{thm_HJ} and \ref{HJ_Holder_thm}, we first
restrict ourselves to a short time interval.

\subsection{Short time regularity by a Duhamel formula}
Let $K$ be the fractional heat kernel defined in
\eqref{heat_kernel_def}. 
A solution $v$ of (\ref{fractal_HJ}) is
formally given by the Duhamel formula
\begin{align}
    \begin{split}
&v(t,x)=    \psi \left( v \right) \left( t,x \right) \\ 
    &:= K \left( t, \cdot \right) \ast v_0 \left( \cdot \right) \left( x \right) - \int_0^t K \left( t-s, \cdot \right) \ast \left( H \left(s,\cdot, v \left( s,\cdot \right), Dv \left( s, \cdot \right) \right) - f \left( s, \cdot \right) \right) \left( x \right) ds,
\end{split}
    \label{duhamel_integral}
\end{align}
where $\ast$ is convolution in $\R^d$.
Note that solutions of this equation are fixed points of $\psi$. 
\begin{proposition}[Spatial regularity]     \label{duhamel_regularity}
        Assume \ref{L1}--\ref{L2}, \ref{A3}--\ref{A5},
        \ref{B1}--\ref{B3}, and $k \in \{2,3\}$. For $R_0\geq0$, let
        $R_1 =  \left( 1+\mathcal{K} \right) R_0 + 1$ with
        $\mathcal{K}$ defined in \ref{L2}. \medskip

\noindent (a) If $v_0 \in W^{k-1,\infty} ( \R^d)$ with
        $\|v_0\|_{W^{k-1,\infty}} \leq R_0$, then there is $T_{0} \in ( 0, T)$
         such that $\psi$ in
        (\ref{duhamel_integral}) has a unique  
        fixed point $v \in C_b^{k-1} ( [ 0,T_0 ] \times \R^d)$ with $t^{1/\sigma}D^{k} v \in C_b ([ 0,T_0 ]\times \R^d)$ and
        $$\|v\|_{L^{\infty}} + \dots + \|D^{k-1} v\|_{L^{\infty}} +
        \|t^{1/\sigma} D^{k} v\|_{L^{\infty}} \leq R_1.$$

        \noindent (b) If $v_0 \in W^{k,\infty} ( \R^d)$ with
        $\|v_0\|_{W^{k,\infty}} \leq R_0$, then there is $T_{0} \in ( 0, T)$
         such that $\psi$ in
        (\ref{duhamel_integral}) has a unique  
        fixed point $v \in C_b^{k} ( [ 0,T_0 ] \times \R^d)$  and
        $$\|v\|_{L^{\infty}} + \dots + \|D^{k} v\|_{L^{\infty}}\leq R_1.$$

\noindent In both cases  $T_0$ only depends on $\sigma$ and the quantities in 
        \ref{L1}--\ref{B3}.
\end{proposition}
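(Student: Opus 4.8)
The plan is to obtain $v$ as a fixed point of $\psi$ via the Banach fixed point theorem on a small closed ball of a Banach space tailored to the claimed regularity. In case (a) let
\[
  X_{T_0}:=\Big\{v\in C_b^{k-1}\big([0,T_0]\times\R^d\big):\ t^{1/\sigma}D^k v\in C_b\big([0,T_0]\times\R^d\big)\Big\},
\]
normed by $\|v\|_{X_{T_0}}:=\sup_{t\in[0,T_0]}\big(\sum_{j=0}^{k-1}\|D^jv(t,\cdot)\|_\infty+t^{1/\sigma}\|D^kv(t,\cdot)\|_\infty\big)$; in case (b) take $X_{T_0}:=C_b^k([0,T_0]\times\R^d)$ with the analogous norm without the weighted term. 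Both are Banach spaces (in case (a) a Cauchy sequence converges in $C_b^{k-1}$, its weighted $k$-th derivatives converge on $(0,T_0]$ since $t^{1/\sigma}$ is bounded away from $0$ there, and the limit extends continuously by $0$ at $t=0$). Put $\mathcal B:=\{v\in X_{T_0}:\|v\|_{X_{T_0}}\le R_1\}$, a closed nonempty set; any fixed point automatically satisfies $v(0,\cdot)=v_0$ because $K(t)\ast v_0\to v_0$ uniformly by \ref{B1}--\ref{B2} while the integral in \eqref{duhamel_integral} vanishes as $t\to0^+$.

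First I would check $\psi(\mathcal B)\subseteq\mathcal B$. Write $\psi(v)(t)=K(t)\ast v_0-\int_0^t K(t-s)\ast G_v(s)\,ds$ with $G_v(s,x):=H(s,x,v(s,x),Dv(s,x))-f(s,x)$. For the free term, $\|K(t)\|_{L^1}=1$ and $\|D^\beta K(t)\|_{L^1}\le\mathcal K t^{-|\beta|/\sigma}$ from \ref{L2}, together with $D^j(K(t)\ast v_0)=K(t)\ast D^jv_0$ for $j\le k-1$ and $D^k(K(t)\ast v_0)=DK(t)\ast D^{k-1}v_0$, give that its $X_{T_0}$-norm is $\le\|v_0\|_{W^{k-1,\infty}}+\mathcal K\|D^{k-1}v_0\|_\infty\le(1+\mathcal K)R_0$ in case (a) and $\le R_0$ in case (b). For the Duhamel integral, on $\mathcal B$ the chain rule together with \ref{A3} (with $R=R_1$) and \ref{B3} bound $\|G_v(s)\|_\infty\le C_{R_1}+\|f\|_\infty$ and, for $1\le|\alpha|\le k-1$, bound $\|D^\alpha G_v(s)\|_\infty$ by a polynomial in $\|Dv(s)\|_\infty,\dots,\|D^kv(s)\|_\infty$ in which the top term $D_pH\cdot D^kv(s)$ enters \emph{linearly}; hence $\|D^\alpha G_v(s)\|_\infty\le C(1+s^{-1/\sigma})$ in case (a) (as $\|D^kv(s)\|_\infty\le R_1 s^{-1/\sigma}$ on $\mathcal B$) and $\le C$ in case (b). Moving at most one derivative onto $K$ whenever a singularity is unavoidable (so only $(t-s)^{-1/\sigma}$ occurs, integrable because $\sigma>1$) and the rest onto $G_v$, Young's inequality gives, for $1\le m\le k$,
\[
  \Big\|D^m\!\!\int_0^t K(t-s)\ast G_v(s)\,ds\Big\|_\infty\le\mathcal K\!\int_0^t(t-s)^{-1/\sigma}\|D^{m-1}G_v(s)\|_\infty\,ds,
\]
and the analogous bound without the factor $\mathcal K(t-s)^{-1/\sigma}$ for $m=0$. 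Using these bounds, $\int_0^t(t-s)^{-1/\sigma}\,ds=\tfrac{t^{1-1/\sigma}}{1-1/\sigma}$ and $\int_0^t(t-s)^{-1/\sigma}s^{-1/\sigma}\,ds=c_\sigma t^{1-2/\sigma}$ (with $c_\sigma<\infty$ since $\tfrac1\sigma<1$), every contribution of the integral to $\|\psi(v)\|_{X_{T_0}}$ is $\le C(R_1,\dots)\,T_0^{1-1/\sigma}$: in particular, in case (a) the weight $t^{1/\sigma}$ exactly turns the worst power $t^{1-2/\sigma}$ into $t^{1-1/\sigma}\to0$. Thus for $T_0$ small (depending only on $\sigma$ and the quantities in \ref{L1}--\ref{B3}) this is $\le1$, whence $\|\psi(v)\|_{X_{T_0}}\le R_1$.

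Shrinking $T_0$ further if needed, $\psi$ is a contraction on $\mathcal B$: for $v_1,v_2\in\mathcal B$ the free term cancels, and since $H$ has $(u,p)$-derivatives bounded by $C_{R_1}$ on the relevant range (\ref{A3}), the chain rule bounds $G_{v_1}-G_{v_2}$ and its spatial derivatives up to order $k-1$ by the $X_{T_0}$-distance of $v_1,v_2$ (with at worst the $s^{-1/\sigma}$ weight in case (a)), so the same convolution estimates yield $\|\psi(v_1)-\psi(v_2)\|_{X_{T_0}}\le\theta(T_0)\|v_1-v_2\|_{X_{T_0}}$ with $\theta(T_0)\to0$. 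Fixing $T_0$ with $\theta(T_0)<1$, the Banach fixed point theorem gives a unique fixed point $v\in\mathcal B$; its regularity ($D^jv\in C_b$ for $j\le k-1$ and $t^{1/\sigma}D^kv\in C_b$ in case (a), resp.\ $D^kv\in C_b$ in case (b)) is read off from the Duhamel formula using smoothness of $K$ for $t>0$, the heat-kernel bounds, and dominated convergence to differentiate under the convolution and the time integral.

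The step I expect to be most delicate is the derivative bookkeeping in case (a) with $k=3$: two spatial derivatives of $G_v$ must be seen to involve only $D^3v$ (controlled on $\mathcal B$, with weight $s^{-1/\sigma}$) and derivatives of $H$ of order $\le2$ (covered by \ref{A3}), while simultaneously no more than one derivative may be shifted onto $K$ so that the kernel singularity stays at the integrable order $(t-s)^{-1/\sigma}$; it is then the weight $t^{1/\sigma}$ that precisely absorbs the resulting $t^{1-2/\sigma}$ (negative as $\sigma<2$) and makes the integral contribution vanish with $T_0$. The remaining ingredients — completeness of the weighted space and the Fubini/dominated-convergence justifications for exchanging derivatives, convolutions and the time integral in the presence of the integrable singularities — are routine.
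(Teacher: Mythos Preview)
Your plan is correct and follows essentially the same route as the paper: Banach fixed point on the weighted space $X_{T_0}$ with radius $R_1$, shifting exactly one spatial derivative onto the kernel so only the integrable singularity $(t-s)^{-1/\sigma}$ appears, the chain-rule observation that $D^k v$ enters $D^{k-1}G_v$ linearly with coefficient $D_pH$ (producing the $s^{-1/\sigma}$ factor), the Beta-integral $\int_0^t(t-s)^{-1/\sigma}s^{-1/\sigma}\,ds=c_\sigma t^{1-2/\sigma}$, and the absorption of $t^{1-2/\sigma}$ by the weight $t^{1/\sigma}$. The paper organizes the same computation around the two display formulas for $\partial_x^\beta\partial_{x_i}\psi(v)$ (cases $|\beta|\le k-2$ and $|\beta|=k-1$) and chooses a single $T_0$ making $c(T_0)\le\tfrac12$ so that self-map and contraction hold simultaneously; your separation into two smallness steps is equivalent.
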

\begin{proof}
(a) We will use the Banach fixed point theorem in the Banach (sub) space
\begin{align*}
    X = \big\{ v : v,\dots, D^{k-1} v, t^{1/\sigma} D^{k} v \in C_b ( [ 0,T_0 ] \times \R^d ) \text{ and } \vertiii{v}_{k} \leq R_1 \big\},
\end{align*}
where $\vertiii{v}_k =\|v\|_{k-1}  + \sum_{|\beta| = k} \|t^{1/\sigma} D_x^{\beta}v
\|_{\infty}$ and $\|v\|_k = \sum_{0 \leq |\beta| \leq k}
\|D_x^{\beta}v \|_{\infty}$.

Let $v \in X$. For $i = 1,\ldots,d$ and $\beta \in \N^d$,  $|\beta|
\leq k-2$,
\begin{align}\label{likn1}
        \partial_x^{\beta} \partial_{x_i} \psi (v) = K ( t ) \ast \partial_x^{\beta} \partial_{x_i} v_0(x) - \int_0^t \partial_{x_i} K \big( t-s \big) \ast \partial_x^{\beta} \Big(H \big(\cdot,\cdot, v  , Dv \big) - f \Big) (s,x)\, ds,
    \end{align}
while for $|\beta| = k-1$,
\begin{align}\label{likn2}
         t^{1/\sigma} \partial_x^{\beta} \partial_{x_i} \psi \left( v
         \right) = &\ t^{1/\sigma} \partial_{x_i} K \left( t \right) \ast \partial_x^{\beta} v_0 \left( x \right) \\
      \notag   & - t^{1/\sigma} \int_0^t \partial_{x_i} K \left( t-s \right) \ast \partial_x^{\beta} \Big( H \big(\cdot,\cdot, v , Dv \big) - f  \Big) \left(s, x \right) ds.
     \end{align}
If $w$ and $F$ are bounded functions, then 
$K \left( t,\cdot \right) \ast w$ and $\int_0^t
\partial_{x} K \left( t-s, \cdot \right) \ast F \left( s,\cdot \right) ds$
are well-defined, bounded and continuous by \ref{L2} and 
an argument like in the proof of \cite[Proposition 3.1]{droniou2003global}.
It follows by \ref{A3} and \ref{B3}, that the derivatives of $\psi
\left( v \right)$ in \eqref{likn1} and \eqref{likn2}
are well-defined, bounded, and continuous. In particular by \ref{L2},
for $t \in ( 0, T)$, 
$$\|t^{1/\sigma} \partial_{x_i} K \left( t \right) \ast
\partial_x^{\beta} v_0\|_{C_b}\leq \mathcal K\|\partial_x^{\beta} v_0\|_{C_b}.$$

Let $u,v \in X$ and $t\in[0,T_0]$. By \ref{A3} and \ref{B3}
there is a constant $C_{R_1} >0$, such that
\begin{align} \label{H_bounds}
        &\big|\partial_x^{\beta} \big[H \left(s,x,u ( s,x ), Du
        ( s,x ) \right)\big]\big| + \big| \partial_x^{\beta} f (
        s,x ) \big|
        \leq  \begin{cases}
            C_{R_1} , \qquad\qquad\, 0 \leq |\beta| \leq k-2, \\
            C_{R_1} \left( 1+ s^{-\frac1\sigma} \right) , \quad|\beta| = k-1,
            \end{cases} \\[0.2cm]
        \label{H_difference}
              &\big|\partial_x^{\beta} \big[H \left( s,x,u, Du \right)\big] -
        \partial_x^{\beta} \big[H \left( s,x,v, Dv \right)\big] \big|
        \leq  \begin{cases}
             C_{R_1}  \|u-v\|_{|\beta|+1} , \quad\ \ \,0 \leq |\beta| \leq k-2, \\
             C_{R_1} \left( 1+ s^{-\frac1\sigma} \right)
             \vertiii{u-v}_{3} ,  |\beta| = k-1.
            \end{cases}\hspace{-0.55cm}
        \end{align}
    By \ref{L2}
    $\int_0^t \int_{\R^d} |K \left( t-s,x \right) | \,dx\, ds\leq T_0$, 
      $ \int_0^t \int_{\mathbb{R}^d}
    |\partial_{x_i} K \left( t-s,x \right) | \,dx\, ds \leq k \left(
    \sigma \right) T_0^{1- 1/\sigma}$, and
    \begin{align*}
      \int_0^t s^{-1/\sigma}\int_{\mathbb{R}^d}
    |\partial_{x_i} K \left( t-s,x \right) | \,dx \,ds\leq \gamma \left(
    \sigma \right) T_0^{1- 1/\sigma},
    \end{align*}
 where $k \left( \sigma \right) = \mathcal{K} \frac{\sigma}{\sigma-1}$
 and $\gamma(\sigma)=\mathcal{K}\int_0^1 \left( 1-\tau \right)^{-1/\sigma} \tau^{-1/\sigma} d\tau$.
 From these considerations and Young's inequality for convolutions on
 \eqref{likn1} and \eqref{likn2}, we compute the norm  in $X$,
    \begin{align*}
        &\|\psi \left( v \right) \|_{\infty} + \sum_{i=1}^d \Big( \|\partial_i \psi \left( v \right) \|_{\infty} + \sum_{1\leq|\beta| =k-2} \|\partial_{x}^{\beta} \partial_i \psi \left( v \right) \|_{\infty} + \sum_{|\beta| = k-1} \|t^{1/\sigma} \partial_{x}^{\beta} \partial_i \psi \left( v \right) \|_{\infty} \Big) \\
        &\leq \left( 1+\mathcal{K} \right) R_0 \\
        &+\underbrace{C_{R_1} \Big( T_0 + \sum_{i=1}^d \Big( k \left(
        \sigma \right) T_0^{1-1/\sigma} + \sum_{1\leq|\beta| =k-2} k
        \left( \sigma \right) T_0^{1-1/\sigma} + \sum_{|\beta| = k-1}
        k \left( \sigma \right) T_0 +  \gamma(\sigma) T_0^{1-1/\sigma}
        \Big) \Big)}_{=: c (T_0)}. 
    \end{align*} 
    Taking $T_0\in(0,T)$ such that $c (T_0) \leq 1/2$,  $\psi$ maps
    $X$ into itself: By the definition of $R_1$,
    \begin{align*}
        \vertiii{\psi (v)}_{k} \leq (1+\mathcal{K}) R_0 + \frac{1}{2} \leq R_1.
    \end{align*}
  It is also a contraction on $X$.  By (\ref{H_difference}) and
  $\|u\|_1 \leq \|u\|_{k-1} \leq  \vertiii{u}_{k}$,
    \begin{align*}
        &\vertiii{ \psi  \left( u \right) - \psi \left( v \right) }_{k}  \\
        & \ \ \leq C_{R_1} \Big( T_0 \|u-v\|_{1} + \sum_{i=1}^d \Big(
      k \left( \sigma \right) T_0^{1-1/\sigma}\|u-v\|_{1} +
      \sum_{1\leq|\beta| \leq k-2} k \left( \sigma \right) T_0^{1-1/\sigma}\|u-v\|_{|\beta|+1}\\ 
        & \ \ \ \ \ \  + \sum_{|\beta| = k-1} \big( k \left( \sigma
      \right) T_0 +  \gamma(\sigma) T_0^{1-1/\sigma} \big)
      \vertiii{u-v}_{|\beta|+1} \Big) \Big)  \\  
      & \ \ \leq c (T_0) \vertiii{u-v}_{k} \leq \frac{1}{2} \vertiii{u-v}_{k}.
    \end{align*}
An application of Banach's fixed point theorem in $X$ now concludes
the proof of part (a).
\medskip

\noindent (b) We define the Banach (sub) space
\begin{align*}
    X = \big\{ v : v,Dv, \ldots, D^{k} v \in C_b ( ( 0,T_0 ) \times \R^d ) \text{ and } \| v  \|_{k}  \leq R_1 \big\},
\end{align*}
where $\|v\|_k = \sum_{0 \leq |\beta| \leq k}
\|D_x^{\beta}v \|_\infty$. 
We use \eqref{likn1} with $| \beta |\leq k-1 $,
and only the first parts of \eqref{H_bounds} and
\eqref{H_difference}. The rest of the proof is then
similar to the proof of part (a).
\end{proof}

\noindent We proceed to prove time and mixed time-space regularity
results. As a consequence, the solution of \eqref{duhamel_integral} is
a classical solution of \eqref{fractal_HJ}. 
\begin{proposition}     \label{time_proposition}
    Assume $T_0>0$, \ref{L1}--\ref{L2}, \ref{A3}--\ref{A5}, \ref{B1}--\ref{B3}, $v$ satisfies \eqref{duhamel_integral}, and $v,Dv,D^2 v\in C_b ([0,T_0] \times \R^d)$. Then
    \begin{itemize}
        \medskip
    \item[(a)] $\partial_{t} v \in C_b ([0,T_0] \times \R^d)$ and $\| \partial_{t} v  \|_\infty \leq c$, 
        where $c$ depends only on $\sigma,T_{0},d$, the quantities in
        assumptions \ref{L1}--\ref{B3}, and $\|D^kv\|_\infty$ for
        $k=0,1,2$.  
    \end{itemize}
        Assume in addition $D^{3} v\in C_b ([0,T_0] \times \R^d)$.
    \begin{itemize}
      \item[(b)] $v, Dv,\mathcal{L} v, D^{2} v \in UC ( [ 0,T_{0}  ]
        \times \mathbb{R}^{d} )$ with modulus $\omega (t-s,x-y) = C ( |t-s|^{\frac{1}{2}} +  |x-y|)$,
        where $C>0$ only depends on $\sigma,T_{0},d$, the quantities in
        assumptions \ref{L1}--\ref{B3}, and $\|D^kv\|_\infty$ for $k=0,1,2,3$. 
        \medskip
    \item[(c)] If also \ref{B5}, then $\partial_{t} v  \in UC ( (
        0,T_{0} ] \times \mathbb{R}^{d} )$, where the modulus only
        depends on $T_{0}$, $\sigma,T_{0},d$, the quantities in
        assumptions \ref{L1}--\ref{B5}, and the moduli of $v,Dv,
        \mathcal{L} v, D^{2} v$.  
    \end{itemize}
\end{proposition}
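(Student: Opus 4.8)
The plan is to read everything off the Duhamel identity \eqref{duhamel_integral} together with the heat kernel bounds of Section~\ref{sec:fracHK}. Write $g(t,x):=H(x,v(t,x),Dv(t,x))-f(t,x)$; by \ref{A3},\ref{B1},\ref{B3} and the assumed bounds on $v,Dv,D^2v$ (and $D^3v$, under the extra hypothesis of (b)), the function $g$ and its first two (resp.\ all but the top) $x$-derivatives are bounded and continuous on $[0,T_0]\times\R^d$. Letting $t\downarrow0$ in \eqref{duhamel_integral} gives $v(0,\cdot)=v_0$, and since $v,Dv,D^2v$ (resp.\ also $D^3v$) extend continuously and boundedly to $t=0$, it follows that $v_0\in C_b^2$ (resp.\ $C_b^3$). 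I will use two kernel estimates: for $r\in(0,T]$ and $\phi\in W^{1,\infty}(\R^d)$,
\begin{align*}
  \text{(i)}\quad \|\mathcal L K(r)\|_{L^1(\R^d)}\le Cr^{-1},
  \qquad\text{(ii)}\quad \big\|\mathcal L\big(K(r)\ast\phi\big)\big\|_{L^\infty}\le C\big(1+r^{-1+\frac1\sigma}\big)\|\phi\|_{W^{1,\infty}},
\end{align*}
both of which follow from Lemma~\ref{L_p_bounds} --- with $p=1$ applied to $K(r)$, resp.\ $p=\infty$ applied to $K(r)\ast\phi$ (whose first two $x$-derivatives are bounded by $\|D\phi\|_\infty$ and $Cr^{-1/\sigma}\|D\phi\|_\infty$ by \ref{L2}) --- on choosing the cut-off radius $\rho=r^{1/\sigma}\wedge1$. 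Since $\sigma\in(1,2)$ gives $-1+\tfrac1\sigma\in(-\tfrac12,0)$, the estimate (ii) is integrable near $r=0$, and the elementary bounds $\int_s^t r^{-1+\frac1\sigma}\,dr\le\sigma(t^{1/\sigma}-s^{1/\sigma})\le\sigma(t-s)^{1/\sigma}$ (concavity of $r\mapsto r^{1/\sigma}$) and $\int_0^s\!\int_{s-\tau}^{t-\tau}r^{-1}\,dr\,d\tau\le C(t-s)\log\tfrac1{t-s}$ are both $\le C(t-s)^{1/2}$ with $C$ depending on $T_0$.

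For \emph{part (a)}: differentiating \eqref{duhamel_integral} in $t$, using $\partial_rK(r)=\mathcal L K(r)$, that $K(r)$ is an approximate identity as $r\downarrow0$, and legitimising the interchange of $\mathcal L$ with $\int_0^t(\cdots)\,d\tau$ by first splitting off $g(t,\cdot)$ near $\tau=t$ via $\int_0^t\mathcal L K(r)\,dr=K(t)-\delta_0$ (exactly as in \cite{droniou2003global,imbert2005non}), one obtains $\partial_t v=\mathcal L v-g$ on $(0,T_0]\times\R^d$, and this extends to $t=0$ since $v_0\in C_b^2$. Then $\|\partial_tv\|_\infty\le\|\mathcal L v\|_\infty+\|g\|_\infty<\infty$ by Lemma~\ref{L_p_bounds} (which bounds $\|\mathcal L v\|_\infty$ by $\|v\|_\infty,\|D^2v\|_\infty$), \ref{A3}, and \ref{B1}; and $\partial_t v$ is continuous because $g$ is, and because $\mathcal L v$ is, by dominated convergence in its defining integral.

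For \emph{part (b)}: spatially, $v,Dv,D^2v$ are Lipschitz in $x$ with constants $\|Dv\|_\infty,\|D^2v\|_\infty,\|D^3v\|_\infty$, and $\mathcal L v$ is Lipschitz in $x$ because $D(\mathcal L v)=\mathcal L(Dv)$ is bounded (Lemma~\ref{L_p_bounds}, using $D^3v\in C_b$), or directly by splitting $\mu$ into its restrictions to $\{|z|<1\}$ and $\{|z|\ge1\}$ and Taylor expanding on the former. For $\tfrac12$-H\"older regularity in $t$ I differentiate \eqref{duhamel_integral} $j=0,1,2$ times in $x$, keeping all derivatives on the data, obtaining $w(t)=K(t)\ast w_0-\int_0^tK(t-\tau)\ast G(\tau)\,d\tau$ with $w_0\in\{v_0,Dv_0,D^2v_0\}\subset W^{1,\infty}$ and $G\in\{g,D_xg,D^2_xg\}$ bounded, and then split, for $s<t$,
\[
w(t)-w(s)=\big[K(t)-K(s)\big]\ast w_0-\int_s^tK(t-\tau)\ast G(\tau)\,d\tau-\int_0^s\big[K(t-\tau)-K(s-\tau)\big]\ast G(\tau)\,d\tau.
\]
The middle term is $\le\|G\|_\infty(t-s)$; for the last term, $K(t-\tau)-K(s-\tau)=\int_{s-\tau}^{t-\tau}\mathcal L K(r)\,dr$ together with (i) gives $\le C\|G\|_\infty(t-s)\log\tfrac1{t-s}$; and for the first term, $[K(t)-K(s)]\ast w_0=\int_s^t\mathcal L\big(K(r)\ast w_0\big)\,dr$ together with (ii) gives $\le C\|w_0\|_{W^{1,\infty}}(t-s)^{1/2}$ (when $w_0\in\{v_0,Dv_0\}$ the cruder bound $\le(t-s)\|\mathcal L w_0\|_\infty$ works too, since then $w_0\in C_b^2$). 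Hence each of $v,Dv,D^2v$ is $\tfrac12$-H\"older in $t$; finally, splitting $\mu$ at $|z|=1$ and Taylor expanding on $|z|<1$ gives $|\mathcal L v(t,x)-\mathcal L v(s,x)|\le C\big(\|v(t,\cdot)-v(s,\cdot)\|_\infty+\|D^2v(t,\cdot)-D^2v(s,\cdot)\|_\infty\big)$, so $\mathcal L v$ inherits the $\tfrac12$-H\"older time modulus. Combining the space and time estimates for each of $v,Dv,D^2v,\mathcal L v$ yields the joint modulus $\omega(t-s,x-y)=C(|t-s|^{1/2}+|x-y|)$.

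For \emph{part (c)}: by part (a), $\partial_t v=\mathcal L v-g$, so it suffices to bound the modulus of $g$. Since $H$ is $C^3$ in $(u,p)$ and Lipschitz in $x$ on bounded sets by \ref{A3}--\ref{A4}, and $f\in UC([0,T_0]\times\R^d)$ by \ref{B5}, one gets $|g(t,x)-g(s,y)|\le C\big(|x-y|+|v(t,x)-v(s,y)|+|Dv(t,x)-Dv(s,y)|\big)+\omega_f(|t-s|+|x-y|)$, which by part (b) is controlled by a modulus $\omega_g$ depending only on $\omega_f$, the constants in \ref{A3}--\ref{A4}, and the moduli from part (b); adding the modulus of $\mathcal L v$ from part (b) produces a modulus of $\partial_t v$ on $(0,T_0]\times\R^d$ of the asserted form. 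The delicate points are the interchange of $\mathcal L$ and $\int_0^t$ in part (a) and, in part (b), the time estimate for $D^2v$: there $D^2v_0$ is only Lipschitz, the naive Lipschitz-in-time bound fails because $\|\mathcal L K(r)\|_{L^1}\sim r^{-1}$ is not integrable at $r=0$, and one is forced to use the fractional smoothing bound (ii) instead.
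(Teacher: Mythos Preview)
Your proof is correct. Parts (a) and (c) coincide with the paper's argument: both derive $\partial_t v=\mathcal L v-g$ from the Duhamel formula and then read off boundedness and uniform continuity of $\partial_t v$ from that identity.

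The genuine difference is in part (b). You stay entirely within the Duhamel framework: you differentiate \eqref{duhamel_integral} in $x$ to obtain a Duhamel representation for $w=D^j v$, split $w(t)-w(s)$ into three pieces, and control each one via the kernel bounds (i) $\|\mathcal L K(r)\|_{L^1}\le Cr^{-1}$ and (ii) $\|\mathcal L(K(r)\ast\phi)\|_\infty\le C(1+r^{-1+1/\sigma})\|\phi\|_{W^{1,\infty}}$, which you correctly extract from Lemma~\ref{L_p_bounds} with the cut-off $\rho=r^{1/\sigma}$. This yields intermediate moduli $(t-s)^{1/\sigma}$ and $(t-s)\log\frac{1}{t-s}$, both dominated by $(t-s)^{1/2}$. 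The paper instead abandons the Duhamel formula at this point: it uses that $v$ solves the PDE (from part (a)), mollifies in space to form $w^\epsilon=(\partial_{x_ix_j}^2v)\ast\rho_\epsilon$, differentiates the mollified equation to get $|\partial_t w^\epsilon|\le \tilde c/\epsilon+K$ (the $1/\epsilon$ coming from $\|\mathcal L w^\epsilon\|_\infty\lesssim \epsilon^{-1}\|D^3v\|_\infty$), and then balances
\[
\|w(t)-w(s)\|_\infty\le 2\epsilon\|D^3v\|_\infty+(\tilde c/\epsilon+K)|t-s|
\]
at $\epsilon=|t-s|^{1/2}$. The paper's route is shorter and avoids the somewhat delicate kernel identities and the need to verify $v_0\in C_b^3$; your route is self-contained within the semigroup estimates, makes the role of the fractional smoothing exponent $-1+1/\sigma$ explicit, and in fact delivers sharper intermediate time moduli than the stated $\tfrac12$. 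Either argument is perfectly adequate here.
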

\begin{corollary}
  \label{PDE_soln}
    Under the assumptions of Proposition \ref{time_proposition} (a), $v$ is a classical solution of \eqref{fractal_HJ} on $(0,T_0) \times \R^d$.
\end{corollary}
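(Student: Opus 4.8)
The plan is to deduce the equation pointwise from the Duhamel identity \eqref{duhamel_integral} satisfied by $v$, using only the convolution--semigroup property $K(t+h,\cdot)=K(t,\cdot)\ast K(h,\cdot)$ of the heat kernel (immediate from \eqref{heat_kernel_def}) and Proposition \ref{heat_eqn_cauchy}. Set $g(s,x):=H(x,v(s,x),Dv(s,x))-f(s,x)$. By \ref{A3}, \ref{B1}--\ref{B2} and the standing hypothesis $v,Dv,D^2v\in C_b([0,T_0]\times\R^d)$, the function $g$ is bounded, jointly continuous on $[0,T_0]\times\R^d$, and Lipschitz in $x$ uniformly in $t$; only boundedness, continuity, and a uniform-in-$t$ spatial modulus of $g$ will actually be used. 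Also, since $v(t,\cdot)\in C_b^2$ for each $t$, Taylor's theorem gives $|\mathcal L v(t,x)|\le\tfrac12\|D^2v\|_\infty\int_{|z|<1}|z|^2\,d\mu+2\|v\|_\infty\mu(B_1^c)<\infty$, and because the integrand defining $\mathcal L v(t,x)$ is jointly continuous in $(t,x)$ and dominated by a fixed $\mu$-integrable function, dominated convergence shows that $(t,x)\mapsto\mathcal L v(t,x)$ is continuous on $(0,T_0)\times\R^d$.

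The key step is a ``restart'' identity: for $0<t<t+h\le T_0$, applying $K(h,\cdot)\ast$ to \eqref{duhamel_integral} at time $t$, using $K(h,\cdot)\ast K(t-s,\cdot)=K(t+h-s,\cdot)$ and Fubini (all integrands are bounded by $\|g\|_\infty$), and then comparing with \eqref{duhamel_integral} at time $t+h$, one obtains
\begin{align*}
v(t+h,x)=K(h,\cdot)\ast v(t,\cdot)(x)-\int_t^{t+h}K(t+h-s,\cdot)\ast g(s,\cdot)(x)\,ds.
\end{align*}
I would then compute the right $t$-derivative of $v$ at a fixed $(t,x)\in(0,T_0)\times\R^d$ from this formula. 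For the first term, put $U(h,x):=K(h,\cdot)\ast v(t,\cdot)(x)$; since $v(t,\cdot)\in C_b\subset L^\infty$, Proposition \ref{heat_eqn_cauchy} gives $\partial_hU(h,x)=\mathcal L U(h,x)$ for $h>0$, and (commuting $\mathcal L$ with $K(h,\cdot)\ast$ via Fubini, a routine step) $\mathcal L U(h,x)=K(h,\cdot)\ast(\mathcal L v(t,\cdot))(x)\to\mathcal L v(t,x)$ as $h\to0^+$ because $\mathcal L v(t,\cdot)\in C_b$; averaging, $h^{-1}\big(K(h,\cdot)\ast v(t,\cdot)(x)-v(t,x)\big)=h^{-1}\int_0^h\mathcal L U(\eta,x)\,d\eta\to\mathcal L v(t,x)$.

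For the second term, substitute $\tau=t+h-s$ to write it as $h^{-1}\int_0^hK(\tau,\cdot)\ast g(t+h-\tau,\cdot)(x)\,d\tau$ and bound its distance to $g(t,x)$ by $\sup_{0\le\tau\le h}|K(\tau,\cdot)\ast g(t+h-\tau,\cdot)(x)-g(t,x)|$. Splitting $K(\tau,\cdot)\ast g(t',\cdot)(x)-g(t,x)=\big(K(\tau,\cdot)\ast g(t',\cdot)(x)-g(t',x)\big)+\big(g(t',x)-g(t,x)\big)$, the second piece tends to $0$ uniformly for $t'=t+h-\tau$, $\tau\in[0,h]$, as $h\to0^+$ by joint continuity of $g$, while $|K(\tau,\cdot)\ast g(t',\cdot)(x)-g(t',x)|\le\int K(\tau,z)\,\omega(|z|)\,dz$ with $\omega$ a uniform-in-$t'$ spatial modulus of $g$, which tends to $0$ as $\tau\to0^+$ since $K(\tau,\cdot)$ is a probability measure concentrating at the origin and $\omega$ is bounded with $\omega(0^+)=0$. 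Hence the second term tends to $g(t,x)$, and combining gives $\partial_t^+v(t,x)=\mathcal L v(t,x)-g(t,x)$ on $(0,T_0)\times\R^d$. Since $\partial_tv$ exists and is continuous there by Proposition \ref{time_proposition}(a) and the right-hand side is continuous, $\partial_tv=\mathcal L v-g$, i.e.\ $\partial_tv-\mathcal L v+H(x,v,Dv)=f$ pointwise on $(0,T_0)\times\R^d$. The initial datum follows by letting $t\to0^+$ in \eqref{duhamel_integral}: the integral term is $O(t\|g\|_\infty)$ and $K(t,\cdot)\ast v_0\to v_0=u_0$ (as $u_0\in C_b$ and $K(t,\cdot)\to\delta_0$), so by continuity of $v$ up to $t=0$ we get $v(0,\cdot)=u_0$. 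Together with the regularity of $v,Dv,D^2v,\partial_tv,\mathcal L v$ already in hand, this makes $v$ a classical solution.

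I expect the only genuine obstacle to be the non-integrable singularity of $\mathcal L K(\tau)$ as $\tau\to0^+$ --- one only has $\|\mathcal L K(\tau)\|_{L^1}\lesssim\tau^{-1}$ for small $\tau$ --- which blocks the naive route of differentiating the Duhamel integral and pulling $\mathcal L$ under the integral sign. The restart identity is precisely what circumvents this, since there $\mathcal L$ is ever only applied to the single frozen slice $v(t,\cdot)\in C_b^2$, where it is harmless; the remaining care is entirely in the two elementary limits above.
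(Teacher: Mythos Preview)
Your argument is correct, and it is a genuinely different (and more self-contained) route than the paper's. The paper's one-line justification ``Follows by differentiating formula \eqref{duhamel_integral}'' is really a pointer to the machinery assembled around it: Lemma \ref{time_modulus_lemma}(a) differentiates the linear Duhamel formula \eqref{time_duhamel} term by term, using Proposition \ref{heat_eqn_cauchy} for $K(t)\ast v_0$ and Lemma \ref{DH-lem}(a) for the integral term $\Phi(g)$; the latter is the nontrivial ingredient and its proof (in Appendix \ref{pf-DH-lem}) is deferred to \cite{imbert2005non}. Proposition \ref{time_proposition}(a) then specializes to $g=H(\cdot,v,Dv)-f$, and the corollary is immediate.

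Your restart identity sidesteps Lemma \ref{DH-lem}(a) entirely: by rewriting $v(t+h)$ in terms of $v(t)$ via the semigroup property, the operator $\mathcal L$ is only ever applied to the fixed $C_b^2$ slice $v(t,\cdot)$, never to $K(\tau)$ near $\tau=0$, so the $\|\mathcal L K(\tau)\|_{L^1}\lesssim\tau^{-1}$ obstruction you flag simply does not arise. This buys you a fully elementary, reference-free proof; the paper's route is shorter on the page because the work is packaged into Lemma \ref{DH-lem}(a), but that lemma is exactly where the singularity has to be confronted. One minor remark: your final appeal to Proposition \ref{time_proposition}(a) to upgrade the right derivative to a two-sided one is legitimate (the corollary is stated under those hypotheses, so the conclusion $\partial_t v\in C_b$ is available), but you could equally well close the argument without it, since a continuous function with a continuous one-sided derivative is differentiable.
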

Follows by differentiating formula \eqref{duhamel_integral}. To prove Theorem \ref{time_proposition} we use the Duhamel formula
\begin{align}
    v \left( t,x \right) = K \left( t,\cdot \right) \ast v_0 \left( \cdot \right) \left( x \right) - \int_0^t K \left( t-s, \cdot \right) \ast g \left( s, \cdot \right) \left( x \right) ds,
    \label{time_duhamel}
\end{align}
corresponding to the equation
\begin{align}
    \partial_t v \left( t,x \right) - \mathcal{L} v \left( t,x \right) + g \left( t,x \right) = 0.
    \label{time_HJ}
\end{align}
The following technical lemma is proved in Appendix \ref{pf-DH-lem}.

\begin{lemma}\label{DH-lem} 
    Assume \ref{L1}--\ref{L2}, $g, \nabla g \in C_{b} \big( ( 0,T ) \times
    \mathbb{R}^{d} \big)$, and let
    \begin{align*}
    \Phi ( g ) ( t,x ) = \int_{0}^t K ( t-s, \cdot ) \ast g ( s,
      \cdot ) ( x ) ds. 
    \end{align*}

    \noindent (a) \ $\Phi ( g ) ( t,x )$ is $C^1$ w.r.t. $t \in (
0,T )$ and $\ \partial_{t} \Phi ( g ) ( t,x ) = g ( t,x ) +
\mathcal{L} [ \Phi ( g ) ] ( t,x )$.
\medskip

\noindent (b) \ If $\beta\in(\sigma-1,1)$ and $g \in UC( ( 0,T ) \times \mathbb{R}^{d})$, then
\begin{align*}
    &|\partial_{t} \Phi ( g ) (  t,x ) - \partial_{t} \Phi ( g ) (  s,y
  ) | +|\mathcal L \Phi ( g ) (  t,x ) - \mathcal L \Phi ( g ) (  s,y
    ) |\\
    &\leq   2(1+c)\| g  \|_{C_{b,t}C_{b,x}^{1}}   |x - y
  |^{1-\beta}\\
  &\quad+2(1+c) \| g  \|_{C_{b,t}C_{b,x}^{1}}^{\beta}\omega_{g}( |t-s| )^{1-\beta} +\tilde c\|g\|_{C_{b}}|t-s|^{\frac{\sigma-1}\sigma}, 
\end{align*}
where $c=\frac{\sigma}{\sigma-1}T^{\frac{\sigma-1}\sigma}\mathcal
K\int_{|z|<1}|z|^{1+\beta}d\mu(z)+4T\int_{|z| \geq 1} d \mu ( z )$,
$$\tilde c=2\frac{\sigma}{\sigma-1}\mathcal K\int_{|z|<1}|z|^{1+\beta}d\mu(z)K+2T^{\frac1\sigma}\int_{|z| \geq 1} d \mu ( z
  ),$$ 
and $K=\max_{s,t\in[0,T]}{\big|t^{\frac{\sigma-1}\sigma}-s^{\frac{\sigma-1}\sigma}\big|}/{|t-s|^{\frac{\sigma-1}\sigma}}$.
\end{lemma}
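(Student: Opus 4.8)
The plan is to differentiate the Duhamel term directly for part~(a), and to reduce part~(b) to a modulus‑of‑continuity estimate for $\mathcal L\Phi(g)$. Throughout write $K=K_\sigma$, and recall from \ref{L2} that $K\geq 0$, $\|K(t)\|_{L^1}=1$, $\|D^\gamma K(t)\|_{L^1}\leq\mathcal K\,t^{-|\gamma|/\sigma}$, that $\partial_\tau K=\mathcal L K$ on $(0,\infty)$, and — since $K(\tau)$ is the law of a L\'evy process — that $K(\tau)\ast\phi\to\phi$ as $\tau\to0^+$ for $\phi\in C_b$. Since $\sigma\in(1,2)$ we have $1/\sigma<1$, so $\tau\mapsto\tau^{-1/\sigma}$ is locally integrable, and $\int_{|z|<1}|z|^{1+\beta}\,d\mu(z)<\infty$ precisely when $\beta>\sigma-1$ (by \eqref{u-bnd}); these two facts are what make the constants $c,\tilde c$ finite. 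For part~(a) I would split $\Phi(g)(t+h,x)-\Phi(g)(t,x)$ into $\int_t^{t+h}K(t+h-s)\ast g(s)(x)\,ds$ and $\int_0^t(K(t+h-s)-K(t-s))\ast g(s)(x)\,ds$. Divided by $h$, the first tends to $g(t,x)$ by the mollifier property of $K$ and continuity of $g$; in the second, $K(t+h-s)-K(t-s)=\int_{t-s}^{t+h-s}\mathcal L K(\tau)\,d\tau$, so to pass to the limit and recognise $\mathcal L[\Phi(g)]$ I need an $s$‑integrable dominating bound near $s=t$. Writing $\mathcal L=\mathcal L_{<1}+\mathcal L_{\geq1}$ for the split of \eqref{Levy_operator} at $|z|=1$, a first order Taylor expansion together with $D^2(K(\tau)\ast g(r))=DK(\tau)\ast Dg(r)$ gives
\begin{align*}
  |\mathcal L_{<1}(K(\tau)\ast g(r))(x)|&\leq\mathcal K\,\tau^{-1/\sigma}\|Dg\|_\infty\int_{|z|<1}|z|^2\,d\mu(z),\\
  \|\mathcal L_{\geq1}(K(\tau)\ast g(r))\|_\infty&\leq 2\|g\|_\infty\,\mu(B_1^c),
\end{align*}
whose right‑hand sides are integrable in $\tau$ near $0$ since $1/\sigma<1$; moreover $\Phi(g)(t,\cdot)\in C_b^2$ by the same estimates, so $\mathcal L[\Phi(g)]$ is defined pointwise. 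Dominated convergence and Fubini then yield $\partial_t\Phi(g)(t,x)=g(t,x)+\mathcal L[\Phi(g)](t,x)$, and continuity in $t$ follows in the same way.

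For part~(b), by~(a) it suffices to estimate $|\mathcal L\Phi(g)(t,x)-\mathcal L\Phi(g)(s,y)|$ and add $|g(t,x)-g(s,y)|\leq\|Dg\|_\infty|x-y|+\omega_g(|t-s|)$. Substituting $\tau=t-r$ and Taylor‑expanding $\mathcal L_{<1}$ at first order, with $\phi^t_\tau:=K(\tau)\ast g(t-\tau)$,
\[
  \mathcal L_{<1}\Phi(g)(t,x)=\int_0^t\int_{|z|<1}\int_0^1\big[D\phi^t_\tau(x+\theta z)-D\phi^t_\tau(x)\big]\cdot z\,d\theta\,d\mu(z)\,d\tau.
\]
For the spatial modulus I would write $D\phi^t_\tau=DK(\tau)\ast g(t-\tau)$, so that the difference of the integrand at $x$ and at $y$ becomes $DK(\tau)$ convolved against a \emph{second difference} of $g(t-\tau,\cdot)$ (increments $\theta z$, base points differing by $x-y$), which is $\leq 2\|Dg\|_\infty\min(|x-y|,|z|)\leq 2\|Dg\|_\infty|x-y|^{1-\beta}|z|^\beta$; using $\|DK(\tau)\|_{L^1}\leq\mathcal K\tau^{-1/\sigma}$, the integrand is $\leq 2\mathcal K\,\tau^{-1/\sigma}\|Dg\|_\infty|x-y|^{1-\beta}|z|^{1+\beta}$, and $\int_0^t\tau^{-1/\sigma}\,d\tau\leq\frac{\sigma}{\sigma-1}T^{(\sigma-1)/\sigma}$ together with $\int_{|z|<1}|z|^{1+\beta}\,d\mu<\infty$ closes the $|x-y|^{1-\beta}$ estimate. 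The far piece $\mathcal L_{\geq1}\Phi(g)$ is handled the same way, with $\int_{|z|\geq1}d\mu$ in place of the singular $\tau$‑integral and the second difference of $g$ bounded by $2\min(2\|g\|_\infty,\|Dg\|_\infty|x-y|)\leq 4\|g\|_{C_{b,t}C_{b,x}^{1}}|x-y|^{1-\beta}$, giving the $4T\mu(B_1^c)$ constant in $c$.

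For the time modulus (say $s<t$) I would split $\mathcal L_{<1}\Phi(g)(t,x)-\mathcal L_{<1}\Phi(g)(s,x)$ into $\int_0^s\mathcal L_{<1}[K(\tau)\ast(g(t-\tau)-g(s-\tau))](x)\,d\tau$ and $\int_s^t\mathcal L_{<1}[K(\tau)\ast g(t-\tau)](x)\,d\tau$. In the first, the second difference of $g(t-\tau,\cdot)-g(s-\tau,\cdot)$ is $\leq 2\min(\|Dg\|_\infty|z|,\omega_g(|t-s|))\leq 2\|Dg\|_\infty^{\beta}\,\omega_g(|t-s|)^{1-\beta}|z|^\beta$, which after the same $\tau$‑ and $z$‑integrations produces the $\|g\|_{C_{b,t}C_{b,x}^{1}}^{\beta}\,\omega_g(|t-s|)^{1-\beta}$ term. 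In the second, I would instead use the second order Taylor bound $|\mathcal L_{<1}(K(\tau)\ast h)(x)|\leq\frac12\|DK(\tau)\ast Dh\|_\infty\int_{|z|<1}|z|^2d\mu\leq\frac12\mathcal K\,\tau^{-1/\sigma}\|Dh\|_\infty\int_{|z|<1}|z|^2d\mu$ and $\int_s^t\tau^{-1/\sigma}\,d\tau=\frac{\sigma}{\sigma-1}\big(t^{(\sigma-1)/\sigma}-s^{(\sigma-1)/\sigma}\big)\leq\frac{\sigma}{\sigma-1}K|t-s|^{(\sigma-1)/\sigma}$; the far piece on $[s,t]$ contributes $2\|g\|_\infty\mu(B_1^c)(t-s)\leq 2T^{1/\sigma}\mu(B_1^c)\|g\|_{C_b}|t-s|^{(\sigma-1)/\sigma}$, and together these give the $\tilde c$‑term. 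Assembling the spatial, mixed and temporal contributions — and noting that $|\mathcal L\Phi(g)(t,x)-\mathcal L\Phi(g)(s,y)|$ enters once on its own and once inside $\partial_t\Phi(g)$, which accounts for the coefficient $2(1+c)$ — yields the stated inequality.

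The step I expect to be the main obstacle is the time regularity: the $|t-s|^{(\sigma-1)/\sigma}$ bound near $t=0$ in part~(b), and already the interchange of $\mathcal L$ with the time integral in part~(a). The point is that one must avoid any time derivative of $g$ and put only \emph{one} spatial derivative on $K$; the a priori size of $\mathcal L K(\tau)$ is $\tau^{-1}$, which is \emph{not} integrable near $0$, and it is only by exploiting the Lipschitz regularity of $g$ — trading the second $K$‑derivative for a first one at the cost of a harmless $\int_{|z|<1}|z|^2\,d\mu$ — that one recovers the integrable $\tau^{-1/\sigma}$, with the $\int_0^s$/$\int_s^t$ split converting the crude interval length $t-s$ into the fractional power $|t-s|^{(\sigma-1)/\sigma}$.
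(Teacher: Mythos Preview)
Your proposal is correct and follows essentially the same route as the paper. Both proofs hinge on the same idea: split $\mathcal L=\mathcal L_{<1}+\mathcal L_{\geq1}$, transfer one derivative from the quadratic Taylor remainder of $\mathcal L_{<1}K(\tau)$ onto $g$ (so that $\|D^2(K(\tau)*g)\|_\infty\leq\mathcal K\tau^{-1/\sigma}\|Dg\|_\infty$ replaces the non-integrable $\tau^{-1}$), then estimate second differences of $g$ by the interpolation $\min(a,b)\leq a^\beta b^{1-\beta}$; for the time modulus, both use the substitution $\tau=t-r$ and the split $\int_0^s+\int_s^t$. The only cosmetic difference is your treatment of the $\int_s^t$ piece: you bound it via $\int_{|z|<1}|z|^2\,d\mu$ and $\|Dg\|_\infty$, whereas the paper interpolates once more to land on $\int_{|z|<1}|z|^{1+\beta}\,d\mu$ and the (smaller) norm $\|g\|_{C_b}$ appearing in the stated constant~$\tilde c$. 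Either choice yields a valid modulus; if you want to match the exact constants in the statement, replace your second-order Taylor bound on $[s,t]$ by the first-order rewriting plus the same $\min$-interpolation you used elsewhere.
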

Note that $c$, $\tilde c$, and $K$ are finite. We have the following
results for \eqref{time_duhamel} and \eqref{time_HJ}. 
\begin{lemma}
    \label{time_modulus_lemma}
    Assume \ref{L1}--\ref{L2}, $v$ satisfies \eqref{time_duhamel}, and 
    $v, \nabla v, D^2 v, g, \nabla g \in C_b \left( [ 0,T ] \times \R^d
    \right)$.

    \noindent (a) $\partial_t v \in C_b \left(( 0,T )
    \times \R^d \right)$, and $v$ solves equation \eqref{time_HJ} pointwise.
    \label{lemma_time_regularity}
    \smallskip

    \noindent (b) If in addition $g\in UC ( [ 0,T ] \times \mathbb{R}^{d})$, then $\partial_t v$ and
$\mathcal Lv$ are uniformly continuous and for any $x,y\in\R^d$,
$t,s\in[0,T]$, $k=0,1,2$,
\begin{align}
    |\partial_t v(t,x)-\partial_t v(s,y)|+ |\mathcal L v(t,x)- \mathcal L v(s,y)| \leq \omega(|t-s|+|x-y|),
\end{align}
where $\omega$ only depends on $\omega_g$, $\| g  \|_\infty $, $\| g
\|_{C_{b,t}C_{b,x}^{1}}$, $\| Dv_{0}  \|_{\infty} $ , $\|D^2v_0\|_\infty$, $\sigma$, $T$, and $\mu$.
\end{lemma}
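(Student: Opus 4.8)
The plan is to split the Duhamel representation \eqref{time_duhamel} as $v = w - \Phi(g)$, where $w(t,x):=K(t,\cdot)\ast v_0(x)$ is the homogeneous part and $\Phi(g)(t,x):=\int_0^t K(t-s,\cdot)\ast g(s,\cdot)(x)\,ds$ is the operator of Lemma \ref{DH-lem}, treating $w$ via the heat-kernel results of Section \ref{sec:fracHK} and $\Phi(g)$ via Lemma \ref{DH-lem} directly. Note first that the hypotheses on $v$ give $v_0=v(0,\cdot)\in C_b^2(\R^d)$, that $\Phi(g)=w-v\in C_b^2$ in space, that $\|D^jw(t,\cdot)\|_\infty\le\|D^jv_0\|_\infty$ for $j=0,1,2$ (since $K(t)$ is a probability measure, Proposition \ref{heat_eqn_cauchy}), and that by the pointwise Taylor bound for $\mathcal{L}$ one has $\sup_{t}\|\mathcal{L}w(t,\cdot)\|_\infty\le C$ with $C$ depending only on $\|D^2v_0\|_\infty$, $\|v_0\|_\infty$ (equivalently $\|v\|_\infty$, by the Duhamel formula) and $\mu$.

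\textbf{Part (a).} By Proposition \ref{heat_eqn_cauchy}, $w$ is $C^1$ in $t$ on $(0,T)$ with $\partial_tw=\mathcal{L}w$; by Lemma \ref{DH-lem}(a), applicable since $g,\nabla g\in C_b$, $\Phi(g)$ is $C^1$ in $t$ with $\partial_t\Phi(g)=g+\mathcal{L}\Phi(g)$. Subtracting and using linearity of $\mathcal{L}$ (legitimate since $w,\Phi(g)\in C_b^2$) gives $\partial_tv=\mathcal{L}w-g-\mathcal{L}\Phi(g)=\mathcal{L}v-g$, which is exactly \eqref{time_HJ}. Boundedness and continuity of $\partial_tv=\mathcal{L}v-g$ on $(0,T)\times\R^d$ then follow from those of $g$ and of $\mathcal{L}v$, the latter from $v,Dv,D^2v\in C_b$ via a Taylor bound near the origin, the far-field bound $2\|v\|_\infty\mu(B_1^c)$, and dominated convergence.

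\textbf{Part (b).} Since $\partial_tv=\mathcal{L}w-\mathcal{L}\Phi(g)-g$ and $\mathcal{L}v=\mathcal{L}w-\mathcal{L}\Phi(g)$, with $g$ uniformly continuous, it suffices to produce moduli for $\mathcal{L}w$ and $\mathcal{L}\Phi(g)$. The term $\mathcal{L}\Phi(g)=\partial_t\Phi(g)-g$ is handled verbatim by Lemma \ref{DH-lem}(b) upon choosing any $\beta\in(\sigma-1,1)$ (nonempty as $\sigma<2$), producing a modulus depending only on $\|g\|_\infty$, $\|g\|_{C_{b,t}C_{b,x}^1}$, $\omega_g$, $\sigma$, $T$, $\mu$. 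For $\mathcal{L}w$ I would argue in space and time separately. \emph{Space:} split the defining integral for $\mathcal{L}w(t,\cdot)$ at radius $r$, bound the increment $\mathcal{L}w(t,x)-\mathcal{L}w(t,y)$ by a second-order Taylor estimate for $|z|<r$ (using $\|D^2w(t)\|_\infty\le\|D^2v_0\|_\infty$) and by first-order difference estimates for $|z|\ge r$ (using $\|Dw(t)\|_\infty$, $\|D^2w(t)\|_\infty$), then optimize $r\sim|x-y|^{1/2}$ to get $\omega_{\mathrm{sp}}(\rho)\le C(\rho^{1-\sigma/2}\wedge 1)$, uniformly in $t\in[0,T]$. \emph{Time:} set $\varphi:=w(t,\cdot)-w(s,\cdot)$; integrating $\partial_tw=\mathcal{L}w$ and using $\sup_t\|\mathcal{L}w(t,\cdot)\|_\infty\le C$ gives $\|\varphi\|_\infty\le C|t-s|$, while $\|D^2\varphi\|_\infty\le 2\|D^2v_0\|_\infty$, so by the Landau--Kolmogorov interpolation inequality $\|D\varphi\|_\infty\le C\|\varphi\|_\infty^{1/2}\|D^2\varphi\|_\infty^{1/2}\le C|t-s|^{1/2}$; inserting these into the $L^\infty$ interpolation bound of Lemma \ref{L_p_bounds} for $\mathcal{L}\varphi=\mathcal{L}w(t,\cdot)-\mathcal{L}w(s,\cdot)$ and optimizing the free radius yields $\|\mathcal{L}w(t,\cdot)-\mathcal{L}w(s,\cdot)\|_\infty\le C|t-s|^{1-\sigma/2}$ (the endpoint $s=0$ follows by letting $s\to0^+$ and identifying the limit with $\mathcal{L}v_0$). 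Adding the three contributions gives the joint modulus $\omega$.

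The main obstacle is the time regularity of $\mathcal{L}w$: one cannot simply differentiate the heat-kernel bounds, because $v_0$ is only assumed $C_b^2$, so a direct estimate of $\partial_tDw\sim\mathcal{L}Dw$ fails to be locally integrable near $t=0$. The device that makes it work is to trade one spatial derivative for a half power of $|t-s|$ via Landau--Kolmogorov and then use the subcritical ($r$-optimized) estimate of Lemma \ref{L_p_bounds}; an equivalent alternative is the semigroup identity $w(t)=K(t-s,\cdot)\ast w(s)$, which gives $\mathcal{L}w(t,\cdot)-\mathcal{L}w(s,\cdot)=\int[\mathcal{L}w(s,\cdot+z)-\mathcal{L}w(s,\cdot)]\,K(t-s,dz)$, bounded by $\omega_{\mathrm{sp}}$ integrated against the heat kernel at the small time $t-s$ (which vanishes as $t-s\to0$ since $K(\tau)\rightharpoonup\delta_0$).
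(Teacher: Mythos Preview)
Your proof is correct and follows essentially the same decomposition as the paper: split $v=w-\Phi(g)$, handle $\Phi(g)$ by Lemma~\ref{DH-lem}, and handle the homogeneous part $w=K(t)\ast v_0$ separately using the $C_b^2$ regularity of $v_0$. Part (a) matches the paper's argument exactly.

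For part (b) the paper is terse: it invokes Lemma~\ref{DH-lem}(b) for $\Phi(g)$ and then says only that the regularity of $\mathcal{L}w$ and $\partial_t w$ ``follows by similar but much simpler arguments, this time with no derivatives on the kernel $K$ (and hence two derivatives on $v_0$)'', omitting details. Your argument fills this gap explicitly and in a slightly different way than the sketched route: rather than mimicking the near-origin/far-field splitting of Appendix~\ref{pf-DH-lem} with $v_0$ playing the role of $g$, you obtain the time modulus for $\mathcal{L}w$ by combining the bound $\|w(t)-w(s)\|_\infty\le C|t-s|$ (from $\partial_t w=\mathcal{L}w\in L^\infty$) with Landau--Kolmogorov interpolation and the $r$-optimized estimate of Lemma~\ref{L_p_bounds}. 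This is a clean and legitimate alternative; the semigroup variant you mention at the end (writing $\mathcal{L}w(t)-\mathcal{L}w(s)=K(t-s)\ast\mathcal{L}w(s)-\mathcal{L}w(s)$ and integrating the spatial modulus against the small-time kernel) is closer in spirit to what the paper hints at. Both approaches give the required modulus with the stated dependencies (modulo an implicit dependence on $\|v_0\|_\infty$ through the far-field part of $\mathcal{L}w$, which the paper's list also seems to suppress).
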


\begin{proof}
(a) \ By the assumptions and Proposition \ref{heat_eqn_cauchy} and Lemma
\ref{DH-lem} (a), we can differentiate the right hand side of 
\eqref{time_duhamel}. Differentiating and using the two results then
leads to
\begin{align*}
    \partial_t v &= \partial_t \left( K \left( t \right) \ast v_0 \right) - \partial_t \int_0^t K \left( t-s \right) \ast g \left( s \right) ds \\
    &= \mathcal{L} \left( K \left( t \right) \ast v_0 \right) - g \left( t \right) - \mathcal{L} \int_0^t K \left( t-s \right) \ast g \left( s \right) ds \\
    &=  - g \left( t \right) + \mathcal{L} \left( K \left( t \right) \ast v_0 - \int_0^t K \left( t-s \right) \ast g \left( s \right) ds \right)  \\
    &= - g \left( t \right) + \mathcal{L} v \left( t \right).
\end{align*}
Thus we end up with (\ref{time_HJ}) and the proof of (a) is complete.
\smallskip

\noindent (b) \ By \eqref{time_duhamel}, $v$ is the sum of two
convolution integrals. The regularity of the second integral follows
from Lemma \ref{DH-lem} (b). The regularity of the
first integral follows by similar but much simpler arguments, this time
with no derivatives on the kernel $K$ (and hence two derivatives on
$v_0$). We omit the details. 
\end{proof}

\begin{proof}[Proof of Proposition \ref{time_proposition}]
    \noindent (a)  In view of the assumptions,
    the result follows from Lemma \ref{lemma_time_regularity}(a) with $g \left( t,x \right) = H \big( x, v ( t,x ) , Dv ( t,x ) \big)
- f(t,x)$. 
\smallskip

\noindent (b) By (a) and Corollary \ref{PDE_soln}, $v$
solve \eqref{fractal_HJ}. 
We show that $D^{2}v \in UC ( [ 0,T ] \times \R^d  )$. 
Let $w = \partial_{x_{i}x_{j}}^{2}  v$ and $w^{\epsilon} =
w \ast \rho_{\epsilon}$ for a standard mollifier
$\rho_{\epsilon}$. 
Convolving \eqref{fractal_HJ} with $\rho_{\epsilon}$ and then
differentiating twice ($\partial_{x_{i}} \partial_{x_{j}}$), we find
that 
\begin{align*}
    \partial_{t} w^{\epsilon} - \mathcal{L} w^{\epsilon} + \partial_{x_{i}x_{j}}^{2} \big(   H ( t,x,v,Dv ) \ast \rho_{\epsilon} \big) = \partial_{x_{i} x_{j}} f \ast \rho_{\epsilon}.
\end{align*}
By Lemma \ref{L_p_bounds} $\|\mathcal{L}
w^{\epsilon}\|_\infty\leq c\| w^{\epsilon}\|_{C^2_b}$, and then by properties of convolutions,
$$\|\mathcal{L} w^{\epsilon}\|_{\infty}\leq
c \sum_{k=2}^4\|D^kv^{\epsilon}\|_{\infty}\leq \frac c\epsilon \|D\rho\|_{L^1}\|D^3v\|_{\infty}+c(\|D^3v\|_{\infty}+\|D^2v\|_{\infty}).$$
It follows that
$    | \partial_{t} w^{\epsilon} | \leq \frac{\tilde c}{\epsilon} + K,$
where $\tilde c=c\|D\rho\|_{L^1} \| D^{3} v  \|_{\infty}$ and $K>0$ is a constant only depending on $\| v  \|_{\infty}, \| Dv  \|_{\infty }, \| D^{2} v  \|_{\infty} , \| D^{3} v  
\|_{\infty}, \| D^2 f  \|_{\infty} $ and $C_{R} > 0$ from \ref{A3}, with 
$R = \max ( \| v  \|_{\infty} ,  \| Dv  \|_{\infty} )$. 
We find that
\begin{align*}
   & \| w ( t ) - w ( s )  \|_{\infty} \leq  \| w^{\epsilon} ( t ) - w ( t ) \|_{\infty} + \| w^{\epsilon} ( t ) - w^{\epsilon} ( s )  \|_{\infty} + 
    \| w^{\epsilon} ( s ) - w ( s )  \|_{\infty} \\
                                      &\leq 2 \| Dw  \|_{\infty} \cdot \epsilon + \| \partial_{t} w^{\epsilon}  \|_{\infty} | t-s |
    \leq 2 \| D^3v  \|_{\infty} \cdot \epsilon + ( \frac{\tilde c}{\epsilon} + K  )  | t-s | \leq  C |t-s|^{\frac{1}{2}}+ K|t-s|,
\end{align*}
where we took $\epsilon = |t-s|^{\frac{1}{2}}$. Since $w$ is bounded,
this implies H\"older 1/2 regularity in time. The spatial continuity follows from $ |w ( t,x ) - w ( t,y ) | \leq \| D^3 v  \|_{\infty} |x-y| $. In total, we get (recalling that $w = \partial_{x_{i}} \partial_{x_{j}} v$),
\begin{align*}
    | D^{2} v ( s,x ) - D^{2} v ( t,y ) | \leq C (  |t-s|^{\frac{1}{2}} +  |x-y| ),
\end{align*}
where $C >0$ is only dependent on $T_{0}$, 
$\sigma$, $T$, $d$, the quantities in \ref{L1}--\ref{B3},
and $\|D^kv\|_\infty$ for $k=0,1,2,3$. 
The results for $v$ and  $Dv$ follow by simpler similar arguments. Since $v$, $Dv$ and  $D^{2} v $ are uniformly
continuous, by Taylor's theorem (as in the proof Lemma
\ref{L_p_bounds}) $\mathcal{L} v $ is uniformly continuous with a
modulus only depending on the moduli of $v$, $Dv$ and  $D^{2} v$. 
\smallskip

\noindent (c) By \ref{B5} and the results from (b),  $\partial_{t} v \in UC ( ( 0,T_{0} ) \times \R^d  )$ 
by the equation \eqref{fractal_HJ}.  
\end{proof}

\subsection*{Global regularity and proofs of Theorem \ref{thm_HJ} and
  \ref{HJ_Holder_thm}} \label{sec:glob_reg}
From the local in time estimates, we construct a classical
solution $u$ of \eqref{fractal_HJ} on the whole interval $\left( 0,T
\right) \times \R^d$. By Theorem \ref{existence_visc_sol}, there is a
unique  viscosity solution $u$ of
\eqref{fractal_HJ} on $(0,T)$. To 
show that this solution is smooth, we proceed in steps.

\medskip
\noindent 1)\quad By Lemma
\ref{duhamel_regularity} (b) we find a $T_0 >0$ and a 
unique solution $v$ of \eqref{duhamel_integral} satisfying
\begin{align*}
    v, Dv, D^2v, D^3 v \in C_b ( [ 0,T_0 ] \times \R^d ) \text{ and } v ( 0 ) = u_{0},
\end{align*}
and by Corollary \ref{PDE_soln}, $v$ is a classical solution of \eqref{fractal_HJ} on $ (0 ,T_0 )$. 
Since classical solutions are viscosity solutions, $v$ coincides
with the unique viscosity solution $u$ on $(0,T_0)$.

\medskip
\noindent 2)\quad Fix $t_0 \in \left[ 0,T \right)$ and take the 
  value of the viscosity solution $u$ of (\ref{fractal_HJ}) as initial
  condition for \eqref{duhamel_integral} at $t=t_0$. Then $v(t_0,x)= u (
  t_0,x)$  and by Lemma \ref{existence_visc_sol},
  \begin{align}
    \|v \left( t_0, \cdot \right) \|_{W^{1,\infty} \left( \R^d \right)} \leq M_T.
    \label{indep_of_t0}
  \end{align}
  We apply Lemma \ref{duhamel_regularity} (a) with $k=2$ (translate
  time $t\to t-t_0$, apply the theorem, and translate back) to
  obtain a $T_1 >0$, independent of $t_0$, such that on
  \begin{align*}
    ( t_0, t_0 + T_1 ),
  \end{align*}
  we have a unique solution $v$ of \eqref{duhamel_integral} satisfying
 $v, \nabla v, (t-t_0)^{1/\sigma} D^2 v \in C_b$. Then 
  \begin{align*}
    v, \nabla v, D^2 v \in C_b \left( ( t_0 + \delta_1, t_0 + T_1 ) \times \R^d \right)
  \end{align*}
  for any $\delta_1\in(0, T_1)$. Let $\delta_{1} \leq \frac{1}{4} \min ( T_{0}, T_{1} ) $, and take $v \left( t_0 +\delta_1, \cdot \right)$ as initial
condition. By Lemma \ref{duhamel_regularity} (a)
again we find a $T_2>0$ such that on the interval
  \begin{align*}
    ( t_0 + \delta_1, t_0+ \delta_1 + T_2 )
  \end{align*}
  there exists a unique solution $v$ of \eqref{duhamel_integral} such
  that  for any $\delta_2\in(0,T_2)$,
$$v, \nabla v, D^2 v, t^{1/\sigma} D^3 v \in C_b(( t_0 +
  \delta_1+\delta_2, t_0+ \delta_1 + T_2 )).$$
  We define $\tilde{T} := \min ( T_{0}, T_{1}, T_{2} ) $, and
  let $\delta_{2} \leq \frac{1}{8} \tilde{T}$.
  Defining 
  $\delta := \delta_{1} + \delta_{2} \leq \frac{1}{2} \tilde{T} $, we find that
  $$v, Dv, \ldots, D^{3} v \in C_{b} ( ( t_{0} + \delta, t_{0}+ \delta + \tilde{T} ) \times \R^d  ).$$
  By Proposition \ref{time_proposition}, $\partial_{t} v \in C_{b}$,
  and $v$ is a classical solution of
  \eqref{fractal_HJ} on $ ( t_{0} + \delta, t_{0} +\delta+ \tilde{T} )$, therefore coinciding with $u$
  on this time interval. Note that $\tilde{T}>0 $ can be chosen
  independently of $t_{0}$ by \ref{A3}, \ref{B3}, \ref{B4}, and
  \eqref{indep_of_t0}.
  \medskip
  
\noindent 3)\quad We cover all of $(0,T)$ by intervals from step 1)
and 2), repeatedly taking $t_{0} = 0$, $\frac{1}{2} \tilde{T}$, $\tilde{T}$,
  $\frac{3}{2} \tilde{T}$, $\ldots$,  $\frac{N-1}{2} \tilde{T}$ with
  $\frac{N}{2} \tilde{T} \geq T$. We then find that the viscosity
  solution $u$ is a classical solution with 
  bounded derivatives on $(0,T)$ and the proof of Theorem \ref{thm_HJ}
  is complete.

\medskip  
\noindent 4) Theorem \ref{HJ_Holder_thm} follows from Theorem
\ref{thm_HJ} and Proposition \ref{time_proposition}.

\section{Fractional Fokker-Planck equations}
\label{sec:fracFP}
Here we prove the existence of smooth solutions of the fractional
Fokker-Planck equation, along with $C_b$, $L^1$, tightness, and time
equicontinuity in $L^1$ a priori estimates. The equation is given by
\begin{align}
  \left\{ \begin{array}{ll}
      \partial_t m - \mathcal{L}^* m + div \left( b(t,x)m \right) = 0 & \text{ in } \left( 0,T \right) \times \R^d, \\
  m \left( 0,x \right) = m_0 \left( x \right) & \text{ in } \R^d,
   \end{array}
\right.
  \label{Fokker_Planck}
\end{align}
where $b: \left[ 0,T \right] \times \R^d \to \R^d$, and $\mathcal{L}$
(and hence also $\mathcal{L}^*$) satisfies \ref{L1},\ref{L2}.

We first show preservation of positivity and a first $C_b$-bound for bounded
solutions.
\begin{proposition} \label{fp_comparison_principle}
    Assume \ref{L1} and $b, Db\in C_b((0,T)\times\R^d)$ and $m$ is a
    bounded classical solution of \eqref{Fokker_Planck}.
    \medskip
    
 \noindent (a) If $m_0\geq 0$, then $m(x,t)\geq 0$ for
 $(x,t)\in[0,T]\times \R^d$.
\medskip
 
\noindent (b) If $m_0\in C_b(\R^d)$, then
      $\|m(t,\cdot)\|_{\infty} \leq e^{\|(\mathrm{div}\,
        b)^{+}\|_\infty t}\|m_0\|_{\infty}$.
  \end{proposition}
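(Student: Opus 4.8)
My plan is to prove both parts by a single maximum‑principle argument, after first rewriting \eqref{Fokker_Planck} in non‑divergence form. Since $b,Db\in C_b$, expanding $\mathrm{div}(bm)=b\cdot Dm+(\mathrm{div}\,b)\,m$ turns the equation into the linear nonlocal parabolic equation $\partial_t m-\mathcal L^*m+b\cdot Dm+(\mathrm{div}\,b)\,m=0$ with bounded continuous coefficients. To neutralize the (sign‑indefinite) zeroth order term I would pass to the weighted unknown $w=e^{-\lambda t}m$ in case (a), and to $w=e^{-\lambda t}m-\|m_0\|_\infty$ in case (b), with $\lambda$ chosen large enough that $c:=\mathrm{div}\,b+\lambda$ is bounded below by a positive constant $c_0$ (possible because $Db\in C_b$; taking the infimum over admissible $\lambda$ is what will produce the exponential rate in (b)). Then $w$ solves $\partial_t w-\mathcal L^*w+b\cdot Dw+c\,w=R$ with $R\le0$ ($R\equiv0$ in (a), $R=-c\|m_0\|_\infty$ in (b)), and $w(0,\cdot)\ge0$ in (a), $w(0,\cdot)\le0$ in (b); so it will suffice to show $w\ge0$, respectively $w\le0$.

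For (a) I would argue by contradiction, assuming $\inf_{[0,T_1]\times\R^d}w=-a<0$ for some $T_1\in(0,T)$ (working below $T$ keeps us in the open interval on which $w$ is smooth; afterwards I would let $T_1\uparrow T$). The hard part is the non‑compactness of $\R^d$ together with the \emph{absence} of any moment hypothesis on $\mu\,1_{|z|\ge1}$: the usual unbounded penalizers such as $(1+|x|^2)^{s/2}$ are useless here, since $\mathcal L^*$ applied to them need not be bounded. My remedy is to penalize with a \emph{bounded} function. I would fix once and for all $\chi\in C_b^2(\R^d)$ with $0\le\chi\le1$, $\chi(0)=0$, $\chi>0$ off the origin, and $\chi\to1$ at infinity (for instance $\chi(y)=|y|^2/(1+|y|^2)$), and note that by the L\'evy condition \ref{L1} \emph{alone} one has $\sup_{y\in\R^d}\|\mathcal L^*[\chi(\cdot-y)]\|_\infty\le C_\chi$, with $C_\chi$ depending only on $\|D^2\chi\|_\infty$, $\|\chi\|_\infty$ and $\mu$ --- the small jumps are controlled by $\int_{|z|<1}|z|^2\,d\mu^*$ via Taylor's theorem, the large jumps simply by $\mu^*(B_1^c)<\infty$. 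Then, picking a near‑minimizer $(t_\varepsilon,x_\varepsilon)$ of $w$ (necessarily with $t_\varepsilon>0$ once $\varepsilon<a$, since $w(0,\cdot)\ge0$), I would minimize $g:=w+\delta\,\chi(\cdot-x_\varepsilon)$ over $[0,T_1]\times\R^d$: for $\varepsilon<\delta$ its value at $(t_\varepsilon,x_\varepsilon)$ lies below both the far‑field level and the level on $\{t=0\}$, so the minimum is attained at an interior point $(\bar t,\bar x)$ with $\bar t>0$. Inserting the pointwise information there into the equation --- $\partial_t w(\bar t,\bar x)\le0$, $Dw(\bar x)=-\delta D\chi(\bar x-x_\varepsilon)$, and $\mathcal L^*w(\bar x)\ge-\delta C_\chi$ coming from $g(\bar x+z)\ge g(\bar x)$ --- and using $c\ge c_0>0$ with $w(\bar t,\bar x)<-a+\varepsilon<0$, gives $0\le\delta\,C'+c_0(-a+\varepsilon)$ with $C'$ independent of $\varepsilon,\delta$; choosing $\delta=2\varepsilon$ and letting $\varepsilon\to0$ forces $c_0a\le0$, the desired contradiction. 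Hence $w\ge0$, and $T_1\uparrow T$ proves (a).

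Part (b) would then be the mirror image: I would run the same penalized argument for a \emph{maximum} of $w=e^{-\lambda t}m-\|m_0\|_\infty$, with penalizer $g=w-\delta\,\chi(\cdot-x_\varepsilon)$ about a near‑maximizer, using $w(0,\cdot)\le0$ and $R\le0$; all inequalities reverse and one again reaches $c_0a\le0$. This gives $w\le0$, i.e.\ $m(t,\cdot)\le e^{\lambda t}\|m_0\|_\infty$; applying it to $-m$, which solves the same equation, bounds $-m$ the same way, so $\|m(t,\cdot)\|_\infty\le e^{\lambda t}\|m_0\|_\infty$ for every admissible $\lambda$, and optimizing over $\lambda$ yields (b). I expect every step other than the whole‑space issue to be routine; that issue is handled exactly by the bounded penalizer $\chi$ and the L\'evy condition \ref{L1}. (Alternatively, (a) and (b) can be deduced from a comparison principle for linear nonlocal parabolic equations with bounded, Lipschitz‑in‑$x$ coefficients, by comparing $m$ with the spatially constant sub/supersolutions $0$ and $e^{\lambda t}\|m_0\|_\infty$, in the spirit of Theorem \ref{comp_visc_sol}; but the self‑contained argument above makes the role of \ref{L1} explicit.)
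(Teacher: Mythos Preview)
Your approach is correct and is essentially the paper's own: rewrite \eqref{Fokker_Planck} in non-divergence form and run a maximum-principle contradiction argument with a localizer for $\R^d$. The paper packages this as a single subsolution estimate (Lemma~\ref{comparison_principle_fp_lem}) and then deduces (a) from $-m$ and (b) from $\pm m$, which is exactly the alternative you mention at the end; the exponential time-weight you introduce is the standard reduction the paper's sketch leaves implicit. The only technical variation is the localization device: the paper uses a compactly supported cutoff $\chi_R=\chi(\cdot/R)$ and sends $R\to\infty$, whereas you use a fixed bounded penalizer $\chi$ centered at a near-extremizer and send $\varepsilon\to0$. Both work under \ref{L1} alone, for exactly the reason you identify ($\mathcal L^*\chi$ is bounded by Taylor on $|z|<1$ and by $\mu^*(B_1^c)<\infty$ on $|z|\ge1$).

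One caveat on the exponent in (b): your optimization over $\lambda$ delivers the rate $(-\inf\mathrm{div}\,b)^+=\|(\mathrm{div}\,b)^-\|_\infty$, which is the correct growth rate for $\partial_t m+\cdots+(\mathrm{div}\,b)\,m=0$ (at a spatial maximum $\partial_t m\le -(\mathrm{div}\,b)m$). This does not literally coincide with the paper's stated $\|(\mathrm{div}\,b)^+\|_\infty$; in fact the paper's own sketch of Lemma~\ref{comparison_principle_fp_lem}, labelled ``$\mathrm{div}\,b<0$'', drops the term $-(\mathrm{div}\,b)m$, which requires $\mathrm{div}\,b\ge0$. So the discrepancy is a sign slip in the paper, and your argument gives the right constant.
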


In fact this result also holds for bounded viscosity solutions, but this is
not needed here. The result is an immediate consequence of the following lemma.

\begin{lemma}   \label{comparison_principle_fp_lem}
    Assume \ref{L1} and $b, Db\in C_b((0,T)\times\R^d)$ and $m$ is a
    bounded classical subsolution of \eqref{Fokker_Planck}. Then for
    $t\in[0,T]$, 
    \begin{align}    
      \|m(t,\cdot)^{+}\|_{\infty} \leq e^{\|(\mathrm{div}\,
        b)^{+}\|_\infty t}\|m_0^{+}\|_{\infty}
    \end{align}
\end{lemma}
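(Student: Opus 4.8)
The plan is to prove the lemma by a maximum principle, adapted in two ways: an exponential weight in time to kill the zeroth‑order term, and a penalization in space to compensate for the non‑compactness of $\R^d$ (recall that under \ref{L1} alone $\mu^*$ need carry no moments). Expanding $\mathrm{div}(bm)=b\cdot Dm+(\mathrm{div}\,b)\,m$ and setting $\lambda:=\|(\mathrm{div}\,b)^+\|_\infty$ and $v:=e^{-\lambda t}m$, one checks directly that $v$ is again a bounded classical subsolution, now of a linear nonlocal transport equation
\[
\partial_t v-\mathcal{L}^*v+b\cdot Dv+c(t,x)\,v\le 0\qquad\text{in }(0,T)\times\R^d,
\]
whose zeroth‑order coefficient satisfies $c\ge 0$ everywhere (this is exactly what the choice of $\lambda$ buys). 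Since $v(0,\cdot)=m_0\le\|m_0^+\|_\infty$, it then suffices to show $v\le\|m_0^+\|_\infty$ on $[0,T]\times\R^d$.

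To this end I would argue by contradiction: suppose $v(t_*,x_*)=\|m_0^+\|_\infty+\kappa$ for some $\kappa>0$, necessarily at some $t_*>0$. Let $\chi(x):=(1+|x|^2)^{1/2}$, which is smooth, $1$‑Lipschitz, has $\|D^2\chi\|_\infty<\infty$, and satisfies $\chi(x)\to\infty$ as $|x|\to\infty$. For small $\varepsilon,\delta>0$ put $w:=v-\|m_0^+\|_\infty-\varepsilon t-\delta\chi$. Then $w(0,\cdot)<0$, while $w(t_*,x_*)>0$ once $\varepsilon t_*$ and $\delta\chi(x_*)$ are small compared with $\kappa$, and $w(t,x)\le\|v\|_\infty-\delta\chi(x)\to-\infty$ uniformly in $t$ as $|x|\to\infty$. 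Hence $w$ attains a strictly positive maximum over $[0,T]\times\R^d$ at some $(\bar t,\bar x)$ with $\bar x$ in a bounded set, and — after the standard device that rules out $\bar t=T$ (e.g. subtracting a small $\varepsilon'(T-t)^{-1}$) — we may take $\bar t\in(0,T)$. The maximum conditions then give $\partial_t v(\bar t,\bar x)\ge\varepsilon$, $Dv(\bar t,\bar x)=\delta D\chi(\bar x)$, and $v(\bar t,\bar x+z)-v(\bar t,\bar x)\le\delta\big(\chi(\bar x+z)-\chi(\bar x)\big)$ for every $z\in\R^d$.

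Inserting these into $\mathcal{L}^*v(\bar t,\bar x)$ and splitting the $z$‑integral at $|z|=1$ is the core step. On $\{|z|<1\}$ the integrand is $\le\delta\big(\chi(\bar x+z)-\chi(\bar x)-D\chi(\bar x)\cdot z\big)\le\tfrac12\delta\|D^2\chi\|_\infty|z|^2$, so this part is $\le\delta C_1$ with $C_1:=\tfrac12\|D^2\chi\|_\infty\int_{|z|<1}|z|^2\,d\mu^*<\infty$ by \ref{L1}. On $\{|z|\ge1\}$ the integrand is $\le\min\big(2\|v\|_\infty,\ \delta|z|\big)$, so this part is $\le\rho(\delta):=\int_{|z|\ge1}\min(2\|v\|_\infty,\delta|z|)\,d\mu^*$, and $\rho(\delta)\to0$ as $\delta\downarrow0$ by dominated convergence, since $\mu^*(B_1^c)<\infty$ by \ref{L1}. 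Thus $\mathcal{L}^*v(\bar t,\bar x)\le\delta C_1+\rho(\delta)$. Moreover $|b(\bar t,\bar x)\cdot Dv(\bar t,\bar x)|=\delta|b(\bar t,\bar x)\cdot D\chi(\bar x)|\le\delta\|b\|_\infty$, and $c(\bar t,\bar x)\,v(\bar t,\bar x)\ge0$ because $c\ge0$ and $v(\bar t,\bar x)>\|m_0^+\|_\infty\ge0$. Feeding all of this into the subsolution inequality yields
\[
\varepsilon\le\partial_t v(\bar t,\bar x)\le\mathcal{L}^*v(\bar t,\bar x)-b\cdot Dv(\bar t,\bar x)-c(\bar t,\bar x)\,v(\bar t,\bar x)\le\delta\big(C_1+\|b\|_\infty\big)+\rho(\delta),
\]
and choosing first $\varepsilon>0$ and then $\delta>0$ small enough makes the right‑hand side $<\varepsilon$, a contradiction. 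Hence $v\le\|m_0^+\|_\infty$, which is the claim; Proposition \ref{fp_comparison_principle} follows at once by applying the lemma to $m$ and to $-m$ (for part (a), if $m_0\ge0$ then $\|m(t)^-\|_\infty\le e^{\lambda t}\|m_0^-\|_\infty=0$).

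The step I expect to be the real obstacle is precisely the whole‑space setting under the bare Lévy condition \ref{L1}. The penalization $\delta\chi$ forces the maximum to exist, but then $\mathcal{L}^*$ must be controlled on a function of linear growth with no moment help from $\mu^*$: the point is that the small‑jump part of $\mathcal{L}^*$ is automatically harmless ($\chi$ is $C^2$ with bounded second derivative and the Lévy measure sits in $B_1$), whereas the large‑jump part — which need not even be finite for $\chi$ itself — is, at the maximum point, dominated by $\min(2\|v\|_\infty,\delta|z|)$ and hence vanishes as $\delta\downarrow0$, using only $\mu^*(B_1^c)<\infty$.
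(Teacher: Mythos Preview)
Your proof is correct and follows essentially the same strategy as the paper's---a maximum principle argument with a spatial penalization to force an interior maximum on $\R^d$. The differences are cosmetic: you handle general $\mathrm{div}\,b$ via the exponential substitution $v=e^{-\lambda t}m$ (the paper only sketches the case $\mathrm{div}\,b<0$), and you penalize with the growing weight $\delta(1+|x|^2)^{1/2}$ and let $\delta\downarrow0$, whereas the paper uses a compactly supported cutoff $\chi_R$ and sends $R\to\infty$; in both versions the large-jump part of $\mathcal{L}^*$ at the maximum is controlled using only $\mu^*(B_1^c)<\infty$ via dominated convergence, exactly as you identify.
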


\begin{proof}[Proof of Proposition \ref{fp_comparison_principle}]
(a) Apply Lemma \ref{comparison_principle_fp_lem} on $-m$ (which still
  is a solution) and note that $(-m_0)^+=0$. (b) Apply Lemma \ref{comparison_principle_fp_lem} on $m$ and
$-m$.   
\end{proof}
\begin{proof}[Proof of Lemma \ref{comparison_principle_fp_lem}]
    In non-divergence form we get (the
    linear!) inequality
    \begin{align*}
     \partial_t m - \mathcal{L}^* m + b\cdot Dm  +
     (\mathrm{div}\, b)\, m  \leq 0,
 \end{align*}
with $C_b$ coefficients by the assumptions. The proof is then completely standard and we only sketch the case that $\mathrm{div}\,
        b<0$. Let 
$$a=\sup_{(x,t)\in Q_T}m(x,t)^+-\|m_0^+\|_{\infty},$$
$\chi_R(x)=\chi(\frac xR)$ where $0\leq
\chi\in C^2_c$ such that $\chi=1$ in $B_1$ and $=0$ in $B_2^c$, and 
$$\Psi(x,t)=m(x,t)-\|m^+\|_{\infty}-at-\|m^+\|_{\infty}\chi_R(x).$$
We must show that $a\leq0$. Assume by
contradiction that $a>0$. Then there
exists a max point $(\bar x,\bar t)$ of $\Psi$ such that $\bar t>0$. At this max
point $m>0$ (since $a>0$) and
$$m_t\geq a,\quad Dm = D\chi_R, \quad\text{and} \quad  \mathcal{L}^* m \leq
\mathcal{L}^*\chi_R.$$
Hence using the subsolution inequality at this point and $\mathrm{div}\,
        b<0$, we find that
\begin{align*}
  a\leq m_t\leq \mathcal{L}^*m + b\cdot Dm  +
   (\mathrm{div}\, 
        b)\,m &\leq \|m^+\|_{\infty}\Big(\mathcal{L}^*\chi_R + b\cdot
        D\chi_R\Big).
  \end{align*}
An easy computation shows that that all $\chi_R$-terms converge to
zero as $R\to\infty$. Hence we pass to the
limit and find that $a\leq0$, a contradiction to $a>0$. The
result follows.
\end{proof}

The Fokker-Planck equation \eqref{Fokker_Planck} is mass and
positivity preserving (it preserves pdfs) and therefore may preserve
the $L^1$-norm in time. We will now prove a sequence of a priori
estimates for $L^1$ solutions of \eqref{Fokker_Planck}, using a ``very
weak formulation'' of the equation.

\begin{lemma}\label{lem:wf} Assume \ref{L1}, $m_0\in L^1_{\mathrm{loc}}$, $b,Db\in C_b$, and
  $m$ is a classical solution of \eqref{Fokker_Planck} such that
$m,Dm,D^2m\in C_b$. Then
  for every $\phi\in C_c^\infty(Q_T)$, $0\leq s<t\leq T$,
\begin{align}\label{eq:wf}\int_{\R^d} m \phi(x,t)\, dx= \int_{\R^d} m\phi(x,s)\, dx + \int_s^t \int_{\R^d} 
    m\big(\phi_t+\mathcal{L}\phi-b\cdot D\phi\big)(x,r)\, dx\, dr.
  \end{align}
  \end{lemma}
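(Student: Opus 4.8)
The plan is to derive the weak formulation \eqref{eq:wf} directly from the classical equation by integrating against a test function and moving the operators $\mathcal{L}^*$ and $\mathrm{div}$ onto the test function by the adjoint/divergence identities. First I would fix $\phi\in C_c^\infty(Q_T)$ and $0\le s<t\le T$, and start from the pointwise identity
\[
\partial_r\big(m(x,r)\phi(x,r)\big)=m\,\partial_r\phi+\phi\,\partial_r m
=m\,\partial_r\phi+\phi\big(\mathcal{L}^*m-\mathrm{div}(bm)\big),
\]
which is valid on $(0,T)\times\R^d$ since $m$ is a classical solution and $m,Dm,D^2m,m_t,\mathcal{L}^*m\in C_b$ and $\phi$ is smooth. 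Integrating in $r$ over $[s,t]$ and in $x$ over $\R^d$, the left-hand side telescopes to $\int m\phi(x,t)\,dx-\int m\phi(x,s)\,dx$ (the $x$-integral and $r$-integral may be swapped by Fubini, since everything in sight is bounded and $\phi$ has compact $x$-support, so the integrand is absolutely integrable on $\R^d\times[s,t]$).

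Next I would handle the two spatial terms. For the drift term, since $\phi(\cdot,r)$ has compact support and $bm$ is $C^1$ in $x$ with the product $\phi\,bm$ compactly supported, the classical divergence theorem gives $\int_{\R^d}\phi\,\mathrm{div}(bm)\,dx=-\int_{\R^d} bm\cdot D\phi\,dx=-\int_{\R^d} m\,(b\cdot D\phi)\,dx$. For the nonlocal term I would use that $\mathcal{L}^*$ is the $L^2$-adjoint of $\mathcal{L}$: by the definition of the adjoint and Lemma~\ref{adjoint_operator}, $\int_{\R^d}\phi\,\mathcal{L}^*m\,dx=\int_{\R^d} m\,\mathcal{L}\phi\,dx$. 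Strictly this identity is stated for $\phi,m\in C_c^2$; here $m$ is only bounded (not compactly supported), so I would justify the pairing by approximation — either truncate $m$ with a cutoff $\chi_R$ as in the proof of Lemma~\ref{comparison_principle_fp_lem} and let $R\to\infty$ using that $\mathcal{L}\phi$ decays (indeed $\mathcal{L}\phi$ is integrable: $|\mathcal{L}\phi(x)|\le C\|\phi\|_{C_b^2}\wedge(\|\phi\|_\infty\,\mu(B(x,1)^c))$, and more simply $\mathcal{L}\phi\in L^1$ because $\phi$ has compact support and the $L^1$-bound of Lemma~\ref{L_p_bounds} applies), or directly verify $\int\phi\,\mathcal{L}^*m=\int m\,\mathcal{L}\phi$ by writing out the Lévy integrals and applying Fubini, which is legitimate since $\int_{|z|<1}|z|^2\,d\mu<\infty$, $\mu(B_1^c)<\infty$, $\phi\in C_c^2$, and $m,Dm,D^2m$ are bounded.

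Combining the three pieces: the $r$-integral of $\int_{\R^d}\phi\,\mathcal{L}^*m\,dx$ becomes $\int_s^t\int_{\R^d} m\,\mathcal{L}\phi\,dx\,dr$, and the $r$-integral of $-\int_{\R^d}\phi\,\mathrm{div}(bm)\,dx$ becomes $-\int_s^t\int_{\R^d} m\,(b\cdot D\phi)\,dx\,dr$, yielding exactly \eqref{eq:wf}. The one genuinely delicate point — and the part I expect to require the most care — is the passage of $\mathcal{L}^*$ onto $\phi$ when $m$ is merely bounded rather than decaying; everything else is routine calculus (Fubini, fundamental theorem of calculus, classical integration by parts) made rigorous by the boundedness of $m,Dm,D^2m$ and the Lévy condition \ref{L1}. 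Note that the hypothesis $m_0\in L^1_{\mathrm{loc}}$ is essentially a red herring for \eqref{eq:wf} as stated (the identity only sees $m$ on $Q_T$ against compactly supported $\phi$), so no further integrability at infinity is needed for this lemma.
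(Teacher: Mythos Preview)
Your proposal is correct and follows essentially the same approach as the paper: multiply the equation by $\phi$, integrate in space and time, and move the operators onto $\phi$, with the only nontrivial step being the adjoint identity $\int \phi\,\mathcal{L}^*m = \int m\,\mathcal{L}\phi$, justified via $\mathcal{L}\phi\in L^1$ from Lemma~\ref{L_p_bounds} with $p=1$. The paper's proof is a two-line sketch of exactly this argument, so your write-up is simply a more detailed version of the same route.
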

\begin{proof}
Note that $\mathcal L \phi\in C([0,T];L^1(\R^d))$ by Lemma
\ref{L_p_bounds_eq} with $p=1$. Multiply \eqref{Fokker_Planck}
by $\phi$, integrate in time and space, and integrate by parts. The
proof is completely standard, after noting that $\int \mathcal{L}^*m\,
\phi\, dx = \int m\,\mathcal{L}\phi\, dx$ in view of the assumptions of
the Lemma.  
  \end{proof}

\begin{remark} \label{rmk:bounded_testfunctions}
   If in addition $m\in C([0,T];L^1(\R^d))$, then a density argument shows
  that \eqref{eq:wf} holds for any $\phi\in C^\infty_b$.
\end{remark}  

Next we prove mass preservation, time-equicontinuity, and tightness
for positive solutions in $L^1$. For tightness  we need the
following result: 

 \begin{proposition}  \label{prop:tail-control-function}
     Assume \ref{L1} and $m_0\in P(\R^d)$. There exists a function $0\leq \psi \in C^{2}(\R^d)$ with $\|D\psi\|_\infty,$ $\|D^2\psi\|_\infty < \infty$, and  $\displaystyle \lim_{|x|\rightarrow \infty} \psi(x) = \infty$, such that 
\begin{align}\label{eq:tail-control-function}
    \int_{\R^{d}} \Psi(x) \, m_0(dx)<\infty , \quad \int_{ \R^{d} \setminus B_{1}} \Psi(x) \, \mu (dx)<\infty
\end{align}
\end{proposition}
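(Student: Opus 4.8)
The plan is to reduce everything to a one–dimensional, de la Vall\'ee Poussin–type construction applied to the single finite measure
$\lambda:=m_0+\nu$, where $\nu$ denotes the restriction of $\mu$ to $\R^d\setminus B_1$. Note that $\nu$ is finite: by \ref{L1},
$\mu(\R^d\setminus B_1)=\int_{\R^d\setminus B_1}1\wedge|z|^2\,d\mu\le\int_{\R^d}1\wedge|z|^2\,d\mu<\infty$, and $m_0(\R^d)=1$. It then suffices to produce a radial $\psi(x)=h(|x|)$ with $0\le\psi\in C^2(\R^d)$, $\|D\psi\|_\infty,\|D^2\psi\|_\infty<\infty$, $\psi(x)\to\infty$ as $|x|\to\infty$, and $\int_{\R^d}\psi\,d\lambda<\infty$; the two bounds in \eqref{eq:tail-control-function} then follow at once, since $m_0\le\lambda$, $\nu\le\lambda$, and $\psi\ge0$.

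First I would introduce the tail function $g(r):=\lambda(\{|x|>r\})$, which is nonincreasing, bounded by $\lambda(\R^d)<\infty$, and tends to $0$ as $r\to\infty$. Using $g(r)\to0$, choose radii $1\le R_1<R_2<\cdots\uparrow\infty$ with $R_{n+1}-R_n\ge1$ and $g(R_n)\le 2^{-n}$ for all $n$. Fix once and for all a nondecreasing $\theta\in C^\infty(\R)$ with $\theta\equiv0$ on $(-\infty,0]$ and $\theta\equiv1$ on $[1,\infty)$ (so $\theta'(0)=\theta''(0)=0$ and $C_\theta:=\|\theta'\|_\infty+\|\theta''\|_\infty<\infty$), and set
\begin{align*}
  h(r):=\sum_{n\ge1}\theta\!\left(\frac{r-R_n}{R_{n+1}-R_n}\right),\qquad \psi(x):=h(|x|).
\end{align*}
The series is locally a finite sum, so $h\in C^2([0,\infty))$; it is nondecreasing, vanishes on $[0,R_1]$, and satisfies $h(R_{n+1})=h(R_n)+1$ with $h(R_1)=0$, so $h(r)\to\infty$. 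Because the intervals $(R_n,R_{n+1})$ are pairwise disjoint and $\theta'$ is supported in $[0,1]$, at any point at most one summand of $h'$ (respectively $h''$) is nonzero; combined with $R_{n+1}-R_n\ge1$ this gives $\|h'\|_\infty\le C_\theta$ and $\|h''\|_\infty\le C_\theta$.

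Then I would verify the four properties of $\psi$. On $B_{R_1}$ we have $\psi\equiv0$, while on $\R^d\setminus\{0\}$ the map $\psi=h\circ|\cdot|$ is a composition of a $C^2$ and a smooth function; the two descriptions agree on the overlap, so $\psi\in C^2(\R^d)$, and $\psi(x)\to\infty$ since $h$ is nondecreasing with $h(R_n)\to\infty$. From $D\psi(x)=h'(|x|)\,x/|x|$ and $D^2\psi(x)=h''(|x|)\tfrac{x\otimes x}{|x|^2}+\tfrac{h'(|x|)}{|x|}\bigl(I-\tfrac{x\otimes x}{|x|^2}\bigr)$ for $x\ne0$ (both vanishing for $|x|\le R_1$), together with the fact that $h'$ is supported in $\{r\ge R_1\ge1\}$, I get $\|D\psi\|_\infty\le\|h'\|_\infty$ and $\|D^2\psi\|_\infty\le\|h''\|_\infty+\|h'\|_\infty<\infty$. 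Finally, by Tonelli's theorem and $h(r)=\int_0^r h'(s)\,ds$,
\begin{align*}
  \int_{\R^d}\psi\,d\lambda=\int_0^\infty h'(s)\,g(s)\,ds=\sum_{n\ge1}\int_{R_n}^{R_{n+1}}h'(s)\,g(s)\,ds\le\sum_{n\ge1}g(R_n)\bigl(h(R_{n+1})-h(R_n)\bigr)=\sum_{n\ge1}g(R_n)\le1,
\end{align*}
using $h'\ge0$, that $g$ is nonincreasing, and $h(R_{n+1})-h(R_n)=1$ (only the $n$-th summand is non-constant on $[R_n,R_{n+1}]$). This yields all the asserted properties.

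The calculation is routine telescoping and Tonelli; the two points I would take care with — and the only real obstacle — are bookkeeping ones: (i) spacing the radii so that $R_{n+1}-R_n\ge1$, which is exactly what keeps $h'$ and $h''$ uniformly bounded after the rescaling of $\theta$; and (ii) forcing $h$ to be constant on a full neighbourhood $[0,R_1]$ of the origin, which is what makes $\psi(x)=h(|x|)$ genuinely $C^2$ at $0$ with a bounded Hessian — without it the term $h'(|x|)/|x|$ in $D^2\psi$ would be the obstruction. Everything else is standard.
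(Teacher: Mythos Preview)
Your argument is correct and complete. The construction is a clean de la Vall\'ee Poussin--type argument: you pool the two finite measures into $\lambda=m_0+\mu|_{B_1^c}$, pick radii $R_n\uparrow\infty$ with $R_{n+1}-R_n\ge1$ along which the tail $g(r)=\lambda(\{|x|>r\})$ decays geometrically, and build a radial $\psi(x)=h(|x|)$ by stacking smoothed step functions. All the claimed properties are verified; the two points you flag---spacing $R_{n+1}-R_n\ge1$ to keep $h',h''$ bounded, and forcing $h$ constant near $0$ so that $h(|\cdot|)$ is $C^2$ at the origin with the $h'(|x|)/|x|$ term under control---are exactly the right ones, and you handle them cleanly. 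The Tonelli computation $\int\psi\,d\lambda=\int_0^\infty h'(s)g(s)\,ds\le\sum_n g(R_n)\le1$ is correct.

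The paper takes a different route: it does not construct $\psi$ at all, but normalises the tail of the L\'evy measure to a probability $\mu_0=\mu|_{B_1^c}/\mu(B_1^c)$ and then invokes an external result (\cite[Proposition~3.8]{Espen-Indra-Milosz-2020}) applied to the finite family $\Pi=\{m_0,\mu_0\}$. Your proof is therefore more self-contained and elementary, and makes explicit the mechanism (slowly growing radial function with controlled derivatives) that the cited result presumably also encodes. One minor remark: later in the paper (Proposition~\ref{prop:L1}(c)) the function $\psi$ is used as a one-variable function of $|x|$ and additional properties such as concavity are exploited; your $h$ is not concave, but this is not part of the statement you were asked to prove, and the discrepancy is an artefact of the paper rather than a defect in your argument.
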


\begin{proof}
    We let $\mu_{0} = \frac{\mu ( dx ) \mathbf{1}_{|x| \geq 1}}{\int_{B_1^c} \mu (dx)} $ and
$\Pi = \{ m_{0}, \mu_0  \} $ and apply 
    \cite[Proposition 3.8]{Espen-Indra-Milosz-2020}.
\end{proof}

\begin{proposition}\label{prop:L1}
Assume \ref{L1}, $m_0\in C_b$, $b,Db\in C_b$, and
  $m$ is a classical solution of \eqref{Fokker_Planck} such that
$m,Dm,D^2m\in C_b$. We also assume $m\in C([0,T];L^1(\R^1))$,  $m_0\geq 0$, and
$\int_{\R^d}m_0\,dx=1$.

\smallskip
\noindent (a)\ $m\geq 0$ and $\int_{\R^d}m(x,t)\,dx=1$
for $t\in[0,T]$.

\smallskip
\noindent (b)\ There exists a constant $c_0>0$ such that
  \begin{align*}    
    d_0 ( m ( t ), m ( s ) ) \leq c_0 ( 1+\|b\|_{\infty} )
    |t-s|^{\frac{1}{\sigma}} \qquad  \forall s,t \in [ 0,T ].
  \end{align*}
   
  \noindent (c)\ For $\psi$ defined in Proposition \ref{prop:tail-control-function} there is $c > 0 $ such
 that for $t\in[0,T]$,
 \begin{align}
   &\int_{\R^d}m(x,t)\psi(|x|)\,dx\leq
   \int_{\R^d}m_0\psi(|x|)\,dx \label{ti-eq}\\
   &\qquad+2\|\psi'\|_{C_b}+cT\|\psi'\|_{C_b}\Big(\|b\|_{C_b}+\int_{|z|<1}|z|^2d\mu(z)\Big)+T\int_{|z|>1}\psi(|z|)\,d\mu(z).\nonumber
   \end{align}
\end{proposition}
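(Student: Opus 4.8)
The plan is to derive all three parts from the very weak formulation \eqref{eq:wf}. Since $m\in C([0,T];L^1(\R^d))$ by hypothesis, Remark \ref{rmk:bounded_testfunctions} permits using in \eqref{eq:wf} any time-independent test function $\phi\in C_b^\infty(\R^d)$ (and, after a routine density step, any $\phi\in C_b^2(\R^d)$). For part (a), positivity of $m$ is precisely Proposition \ref{fp_comparison_principle}(a); mass conservation follows by taking $\phi\equiv 1$, since then $\phi_t=\mathcal L\phi=D\phi=0$, so $\int_{\R^d}m(x,t)\,dx=\int_{\R^d}m(x,s)\,dx$ for all $0\le s<t\le T$, and with $s=0$ this equals $\int_{\R^d}m_0\,dx=1$.

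For part (b), fix $f\in\Lip_{1,1}(\R^d)$ and $0\le s<t\le T$, and mollify $f_\varepsilon:=f*\rho_\varepsilon$ at scale $\varepsilon\in(0,1)$, so that $\|f_\varepsilon\|_\infty,\|Df_\varepsilon\|_\infty\le1$, $\|f-f_\varepsilon\|_\infty\le\varepsilon$ and $\|D^2f_\varepsilon\|_\infty\le C\varepsilon^{-1}$. I split $\int_{\R^d}f\,d(m(t)-m(s))$ into $\int(f-f_\varepsilon)\,d(m(t)-m(s))$, which is $\le2\varepsilon$ by part (a), and $\int f_\varepsilon\,d(m(t)-m(s))$, which by \eqref{eq:wf} with $\phi=f_\varepsilon$ together with $\|Df_\varepsilon\|_\infty\le1$ and $\|m(r)\|_1=1$ is bounded by $(t-s)(\|\mathcal L f_\varepsilon\|_\infty+\|b\|_\infty)$. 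To estimate $\|\mathcal L f_\varepsilon\|_\infty$ I split the L\'evy integral at the scale $\varepsilon$: on $\{|z|<\varepsilon\}$ a second-order Taylor bound and \eqref{u-bnd} give a contribution $\lesssim\varepsilon^{-1}\int_{|z|<\varepsilon}|z|^2\,d\mu\lesssim\varepsilon^{1-\sigma}$; on $\{\varepsilon\le|z|<1\}$ a first-order bound and \eqref{u-bnd} give $\lesssim\int_{\varepsilon\le|z|<1}|z|\,d\mu\lesssim\varepsilon^{1-\sigma}$; and on $\{|z|\ge1\}$ one gets $\le2\|f_\varepsilon\|_\infty\,\mu(B_1^c)$, finite by \ref{L1}. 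Hence $\int_{\R^d}f\,d(m(t)-m(s))\le 2\varepsilon+C(t-s)(\varepsilon^{1-\sigma}+1+\|b\|_\infty)$; choosing $\varepsilon=|t-s|^{1/\sigma}$ when $|t-s|<1$ (and using the trivial bound $d_0\le2$ otherwise) and taking the supremum over $f\in\Lip_{1,1}(\R^d)$ yields $d_0(m(t),m(s))\le c_0(1+\|b\|_\infty)|t-s|^{1/\sigma}$ with $c_0$ depending only on $\sigma$, $T$, $d$ and $\mu$.

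For part (c), $\psi$ is unbounded and cannot be inserted directly, so I truncate: set $\Psi_R:=g_R\circ\psi$ with $g_R:[0,\infty)\to[0,\infty)$ smooth, nondecreasing, concave, equal to the identity on $[0,R]$ and eventually constant. Then $\Psi_R\in C_b^2(\R^d)$, $0\le\Psi_R\nearrow\psi$ pointwise as $R\to\infty$, $\|D\Psi_R\|_\infty\le\|D\psi\|_\infty$, $\|D^2\Psi_R\|_\infty$ is bounded uniformly in $R$, and, since $0\le g_R'\le1$ is nonincreasing and $\psi(|x+z|)\le\psi(|x|)+\psi(|z|)$ (subadditivity, which we may assume of the radial function $\psi$ from Proposition \ref{prop:tail-control-function}, e.g.\ concavity), one has $\Psi_R(x+z)-\Psi_R(x)\le\psi(|z|)$ for all $x,z$. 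Taking $\phi=\Psi_R$ and $s=0$ in \eqref{eq:wf},
\[
\int_{\R^d}m(x,t)\Psi_R(x)\,dx=\int_{\R^d}m_0\Psi_R\,dx+\int_0^t\int_{\R^d}m\big(\mathcal L\Psi_R-b\cdot D\Psi_R\big)(x,r)\,dx\,dr,
\]
and I bound the right-hand side: $\int m_0\Psi_R\le\int\psi\,dm_0<\infty$ by \eqref{eq:tail-control-function}; the drift term is $\le T\|b\|_\infty\|D\psi\|_\infty$ using $\|m(r)\|_1=1$; in $\mathcal L\Psi_R$ the part of the L\'evy integral over $\{|z|<1\}$ is $\le\tfrac12\|D^2\Psi_R\|_\infty\int_{|z|<1}|z|^2\,d\mu$, while the part over $\{|z|\ge1\}$ is $\le\int_{|z|\ge1}\psi(|z|)\,d\mu<\infty$ by $\Psi_R(x+z)-\Psi_R(x)\le\psi(|z|)$ and \eqref{eq:tail-control-function}. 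All these bounds are independent of $R$, so letting $R\to\infty$ by monotone convergence on the left yields \eqref{ti-eq}.

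The step I expect to be the main obstacle is the far-field part of $\mathcal L\Psi_R$ in (c): since the tail measure $\mu\,1_{|z|\ge1}$ may possess no finite power moment, this term can only be closed by means of the tailored ``Lyapunov'' function $\psi$ of Proposition \ref{prop:tail-control-function}, whose integrability against $\mu\,1_{|z|\ge1}$ and (sub)additivity are what let us transfer the growth from $x+z$ onto the finite integral $\int_{|z|\ge1}\psi(|z|)\,d\mu$; some care is also needed to build the truncations $\Psi_R$ so that both this inequality and an $R$-uniform second-derivative bound persist. In part (b), the corresponding technical point is calibrating the mollification scale $\varepsilon\sim|t-s|^{1/\sigma}$ so as to balance the approximation error against the blow-up $\|\mathcal L f_\varepsilon\|_\infty\sim\varepsilon^{1-\sigma}$.
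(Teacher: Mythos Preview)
Your proposal is correct and follows essentially the same approach as the paper. A few minor differences are worth noting: in part (a) the paper carries out the cutoff argument $\phi=\chi_R$ explicitly rather than invoking Remark~\ref{rmk:bounded_testfunctions} with $\phi\equiv1$ directly, and in part (c) the paper truncates via $\psi_R(r)=\rho_1*(\psi\wedge R)(r)$ composed with $\sqrt{1+|x|^2}$ rather than your $g_R\circ\psi$; both routes are equivalent in spirit and both silently use that the $\psi$ produced by the cited construction is nondecreasing and concave (hence subadditive), which is what makes the far-field bound $|\Psi_R(x+z)-\Psi_R(x)|\le\psi(|z|)$ work.
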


\begin{proof}
(a) \  By Proposition \ref{fp_comparison_principle}, $m\geq0$. Let
  $R>1$ and $\chi_R(x)=\chi(\frac xR)$
  for $\chi\in C^\infty_c$ such that $0\leq\chi\leq1$ and $\chi=1$ in
  $B_1$ and $=0$ 
  in $B_2^c$. We will apply Lemma \ref{lem:wf} with $\phi=\chi_R$ and
  $s=0$ and pass to the limit as $R\to\infty$. To do that, we write
  $\mathcal L=\mathcal L_1 +\mathcal L^1 = \int_{|z|<1}\cdots +\int_{|z|>1}\cdots$,
  and note that by Lemma \ref{L_p_bounds_eq} with $p=\infty$ and $\mu(B_1^c)=0$,
  $$\|\mathcal L_1 \chi_R\|_{C_b}\leq
  C\inf_{r\in(0,1)}\Big(r^{2-\sigma}\frac1{R^2}\|D^2\chi\|_{C_b}+(r^{1-\sigma}-1)\frac1R\|D\chi\|_{C_b}\Big)\leq C\frac1{R^2}\|\chi\|_{C^2_b},$$
  and then
  $$\|\partial_t\chi_R+\mathcal L_1 \chi_R - b\cdot D\chi_R\|_{C_b}\leq
  \frac1R \Big( \|\mathcal 
  \chi\|_{C^2_b}+\|b\|_{C_b}\|D\chi\|_{C_b}\Big)\stackrel[R\to\infty]{}{\longrightarrow}0.$$
Also note that $\|\mathcal L^1 \phi_R\|_{C_b}\leq 2\mu(B_1^c)$ and $\mathcal
L^1 \phi_R(x)\to 0$ for every $x\in\R^d$. Since $m\in
  C([0,T];L^1)$ by assumption, it follows by the dominated convergence
theorem that,
$$\int_0^t\int_{R^d}m\,\mathcal L^1\chi_R\, dx\,dr \stackrel[R\to\infty]{}{\longrightarrow}0.$$
Now we apply Lemma \ref{lem:wf} with $\phi=\chi_R$ and
  $s=0$ and pass to the limit in \eqref{eq:wf}
  as $R\to \infty$:
  $$\lim_{R\to\infty} \int_{\R^d} m(x,t) \chi_R(x)\, dx=
  \lim_{R\to\infty} \int_{\R^d} m_0\chi_R(x)\, dx+0.$$
  The result now
  follows from the dominated convergence theorem since $\chi_R\to1$
  pointwise and $\int m_0\,dx=1$.
  \smallskip

  \noindent (b) \  Fix a
  $\mathrm{Lip}_{1,1}$ function $\phi(x)$. For  $\epsilon\in(0,1)$, let
  $\phi_\epsilon\in C_b^\infty$ be an approximation (e.g. by mollification) such that
\begin{align}\label{mol-bnds} \|\phi-\phi_\epsilon\|_{C_b}\leq \epsilon \|D\phi\|_{C_b} \qquad
  \text{and}\qquad\|D^k\phi_\epsilon\|_{C_b}\leq
  c\epsilon^{(k-1)^+}\|\phi\|_{C^1_b}, \quad k\geq 0.
\end{align}
Applying Lemma \ref{lem:wf} and Remark \ref{rmk:bounded_testfunctions} 
with $\phi=\phi_\epsilon(x)$, then leads to
  \begin{align*}
    \int_{\R^d} (m(x,t)-m(x,s)) \phi_\epsilon(x)\, dx= \int_s^t \int_{\R^d} 
    m\big(0+\mathcal{L}\phi_\epsilon-b\cdot D\phi_\epsilon\big)(x,r)\, dx\, dr.
  \end{align*}
  By Lemma \ref{L_p_bounds_eq} with $p=\infty$ and \eqref{mol-bnds},
  \begin{align*}\|\mathcal{L}\phi_\epsilon\|_{C_b} & \leq c
    \inf_{r\in(0,1)}\Big(r^{2-\sigma}\|D^2\phi_\epsilon\|_{C_b}+r^{1-\sigma}\|D\phi_\epsilon\|_{C_b}+\|\phi_\epsilon\|_{C_b}\Big)\\
    &\leq c
  \inf_{r\in(0,1)}\big(r^{2-\sigma}\frac1\epsilon+r^{1-\sigma}+1\big)\|\phi\|_{C_b^1}\leq  C \epsilon^{1-\sigma}\|\phi\|_{C_b^1},
  \end{align*}
  and hence
 \begin{align*}
    \int_{\R^d} (m(x,t)-m(x,s)) \phi_\epsilon(x)\, dx\leq
    C|t-s|\epsilon^{1-\sigma}(1+\|b\|_{C_b})\|\phi\|_{C_b^1}\|m\|_{C(0,T;L^1)}.
 \end{align*}
 Then by adding and subtracting $(m(x,t)-m(x,s)) \phi_\epsilon(x)$ terms, we find that
 \begin{align*}
   & \int_{\R^d} (m(x,t)-m(x,s)) \phi(x)\, dx \\ & \leq \int_{\R^d}
   (m(x,t)-m(x,s)) \phi_\epsilon(x)\, dx + 2
   \|m\|_{C(0,T;L^1)}\|\phi-\phi_\epsilon\|_{C_b}\\
&\leq 
   C(|t-s|\epsilon^{1-\sigma} + \epsilon)(1+\|b\|_{C_b})\|\phi\|_{C^1_b}\|m\|_{C(0,T;L^1)}.
 \end{align*}
 Since $\|m\|_{C(0,T;L^1)}=1$ by part (a), and $\|\phi\|_{C^1_b}\leq
 2$ for $\mathrm{Lip}_{1,1}$-functions, the result follows from the
 definition of the $d_0$ distance in \eqref{d0} after a
 minimization in $\epsilon$.
 \smallskip

\noindent (c) \ Let $\psi_R(r)=\rho_1*(\psi \wedge R)(r)$ for
$r\geq1$, where $0\leq\rho_1\in C_c^{\infty}((-1,1))$ is symmetric and
has $\int \rho_1\,dx =1$ (a mollifier). We note that $\rho_1*\psi\leq
\psi$ and that $\psi \wedge R$ is
nondecreasing, concave, and $\nearrow \psi$. Standard arguments then show that $\psi_R \in C^\infty_b([1,\infty))$,
\begin{align}
  & 0\leq \psi_{R}\leq R,\quad 0\leq \psi_R'\leq \psi', \quad \psi_R''\leq 0,\quad\|\psi_R''\|_{C_b}\leq \|\rho_1'\|_{L^1}\|\psi'\|_{C_b},\label{deriv-b} \\
  & \psi_{R}\nearrow \rho_1*\psi\ (\leq \psi)
  \qquad\text{as}\qquad
  R\to\infty.\label{mon-conv}
\end{align}
The convergence as $R\to\infty$ is pointwise. We apply Lemma \ref{lem:wf} and remark \ref{rmk:bounded_testfunctions} with
$$\phi(x,t)=\phi_R(x):=\psi_R(\sqrt{1+|x|^2}).$$
Let $\LL=\LL_1+\LL^1$
as in the proof of part (a), and note that (using also \eqref{deriv-b}
and Lemma
\ref{L_p_bounds_eq} with $r=1$),
$$\|D\phi_R\|_{C_b}\leq c\|\psi'\|_{C_b},\ \|D^2\phi_R\|_{C_b}\leq
c\|\rho_1'\|_{L^1}\|\psi'\|_{C_b}, \ \|\LL_1\phi_R\|\leq
c\|\rho_1'\|_{L^1}\|\psi'\|_{C_b}\int_{|z|<1}|z|^2\,d\mu.$$
Next since $\psi_R$ is nonnegative, nondecreasing, and
subadditive\footnote{Nonnegative concave functions $h$ on $[0,\infty)$
    are subadditive: $h(a+b)\leq
h(a)+h(b)$ for $a,b\geq 0$.}, we observe that
  \begin{align*}
    |\psi_R(r)-\psi_R(s)|\leq \psi_R(r-s)
  \quad\text{for all}\quad r,s\geq0.
  \end{align*}
Hence we find that
\begin{align*}|\LL^1\phi_R(x)|\leq\ &\int_{|z|>1}\Big|\psi_R(\sqrt{1+|x+z|^2})-\psi_R(\sqrt{1+|x|^2})\Big|\,d\mu(z)\\
\leq\ &  \int_{|z|>1}\psi_R(|z|)\,d\mu(z)\leq \int_{|z|>1}\psi(|z|)\,d\mu(z).  
\end{align*}
From the estimates above we conclude that
\begin{align*}
  &\Big|\partial_t\phi_R+\mathcal{L}\phi_R-b\cdot
  D\phi_R\Big|\leq
  c\|\psi'\|_{C_b}\Big(\|b\|_{C_b}+\|\rho_1'\|_{C_b}\int_{|z|<1}|z|^2\,d\mu\Big)+\int_{|z|>1}\psi(|z|)\,d\mu.
\end{align*}
Inserting this estimate into \eqref{eq:wf} with $\phi=\phi_R$, along with $m\geq0$, $\int
m(x,t)\,dx=1$ (by part (a)), and $\phi_R(x)\leq\psi(\sqrt{1+|x|^2})$, we get 
\begin{align*}
  &\int_{\R^d} m(x,t) \phi_R(x)\,dx \leq \int_{\R^d} m_0(x) \psi(\sqrt{1+|x|^2})\,dx \\
  &\qquad+ T
c\|\psi'\|_{C_b}\Big(\|b\|_{C_b}+\|\rho_1'\|_{C_b}\int_{|z|<1}|z|^2\,d\mu\Big)+T
\int_{|z|>1}\psi(|z|)\,d\mu.
  \end{align*}
By the monotone convergence theorem and \eqref{mon-conv},
$$\lim_{R\to\infty}\int_{\R^d} m(x,t) \phi_R(x)\,dx=\int_{\R^d}
m(x,t)\, \rho_1*\psi(\sqrt{1+|x|^2})\,dx.$$ 
To conclude that \eqref{ti-eq} holds, we note that $\rho_1*\psi\geq
\psi-\|\psi'\|_{C_b}$
and $$\psi(|x|)\leq\psi(\sqrt{1+|x|^2})\leq\psi(|x|)+\|\psi'\|_{C_b}.$$
The proof of (c) is complete.
\end{proof}

Solutions in $L^1$ also have a better $C_b$ bound than the one in
Proposition \ref{fp_comparison_principle}. This bound is needed in the
local coupling case -- see Section \ref{sec:loc}.

\begin{lemma} \label{L-infinity_estimate}
Assume \ref{L1}, \ref{L2} (ii), $b\in C_b$, $0\leq m_0\in C_b$, and $0\leq m\in C_b(Q_T)$
is a classical solution of \eqref{Fokker_Planck}. If $m\in
C(0,T;L^1(\R^d))$, then there exist a constant $C >0$ only dependent
on $d, q, \sigma, T$, such that for any $1< p < p_0:= \frac{d}{d+1-\sigma}$,
    \begin{align*}
   \| m  \|_{C_b} \leq 1 \vee \Big[  \| m_{0}  \|_{C_b} + C
     T^{\frac{d - p(1+d- \sigma) }{p \sigma}}  \| b  \|_{C_b}
     \Big]^{\frac p{p-1}} .
    \end{align*}
\end{lemma}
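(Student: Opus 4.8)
The plan is to combine the Duhamel representation of $m$ with the heat kernel bounds in \ref{L2}(ii) and close a Gronwall-type inequality for $N(t):=\sup_{0\le s\le t}\|m(s,\cdot)\|_\infty$, which is finite and nondecreasing since $m\in C_b(Q_T)$. \textbf{Step 1 (Duhamel formula).} Since $\mathcal L^*$ is again an operator of the form \eqref{Levy_operator} satisfying \ref{L1}--\ref{L2}, Proposition \ref{heat_eqn_cauchy} applies to it; differentiating $s\mapsto K_\sigma^*(t-s,\cdot)\ast m(s,\cdot)(x)$ in $s$, inserting $\partial_s m=\mathcal L^* m-\mathrm{div}(bm)$ and using that $\mathcal L^*$ commutes with convolution (so the two $\mathcal L^*$-terms cancel), one gets $\partial_s\big[K_\sigma^*(t-s,\cdot)\ast m(s,\cdot)\big]=-\sum_{i=1}^d\big(\partial_{x_i}K_\sigma^*(t-s,\cdot)\big)\ast\big(b_i(s,\cdot)m(s,\cdot)\big)$, where the spatial divergence is moved onto the kernel so that only $b\in C_b$ (and not $Db$) is needed. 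Integrating from $\varepsilon\in(0,t)$ to $t$,
\begin{align*}
 m(t,x)=K_\sigma^*(t-\varepsilon,\cdot)\ast m(\varepsilon,\cdot)(x)-\int_\varepsilon^t\sum_{i=1}^d\big(\partial_{x_i}K_\sigma^*(t-s,\cdot)\big)\ast\big(b_i(s,\cdot)m(s,\cdot)\big)(x)\,ds ,
\end{align*}
and since $bm\in C([0,T];L^1)\cap C_b(Q_T)$ and $\|\partial_{x_i}K_\sigma^*(\tau,\cdot)\|_{L^1}\le\mathcal K\tau^{-1/\sigma}$ with $1/\sigma<1$, the integral converges absolutely; letting $\varepsilon\to0$ (using $m\in C([0,T];L^1)$ and that $K_\sigma^*(\tau,\cdot)$ is an approximate identity with $\|K_\sigma^*(\tau,\cdot)\|_{L^1}=1$) gives the same formula with $\varepsilon=0$ and $m(0,\cdot)=m_0$.

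\textbf{Step 2 ($L^\infty$ estimate).} Taking $L^\infty$ norms, the first term is bounded by $\|K_\sigma^*(t,\cdot)\|_{L^1}\|m_0\|_\infty=\|m_0\|_\infty$. For the integrand, Young's inequality with exponents $p$ and $p'=\tfrac{p}{p-1}$ together with \ref{L2}(ii) gives $\big\|\big(\partial_{x_i}K_\sigma^*(t-s)\big)\ast(b_im(s))\big\|_\infty\le\mathcal K\,(t-s)^{-\alpha}\|b\|_\infty\|m(s,\cdot)\|_{L^{p'}}$ with $\alpha:=\tfrac1\sigma\big(1+(1-\tfrac1p)d\big)$; by interpolation $\|m(s,\cdot)\|_{L^{p'}}\le\|m(s,\cdot)\|_{L^1}^{1/p'}\|m(s,\cdot)\|_\infty^{1/p}$, and since $m\ge0$ the mass $\|m(s,\cdot)\|_{L^1}$ is conserved (Lemma \ref{lem:wf}, as in the proof of Proposition \ref{prop:L1}(a)) and equals $1$ for the probability densities to which the lemma is applied, so $\|m(s,\cdot)\|_{L^{p'}}\le N(t)^{1/p}$ for $s\le t$. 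One checks that $\alpha<1$ exactly when $p<p_0=\tfrac{d}{d+1-\sigma}$, and then $\int_0^t(t-s)^{-\alpha}\,ds=\tfrac{t^{1-\alpha}}{1-\alpha}\le\tfrac1{1-\alpha}T^{1-\alpha}$ with $1-\alpha=\tfrac{d-p(1+d-\sigma)}{p\sigma}$. Taking the supremum over $t\le T$ yields
\begin{align*}
 N(T)\le\|m_0\|_{C_b}+C\,T^{\frac{d-p(1+d-\sigma)}{p\sigma}}\,\|b\|_{C_b}\,N(T)^{1/p},
\end{align*}
with $C$ depending only on $d,p,\sigma$ and $\mathcal K$.

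\textbf{Step 3 (conclusion).} It remains to invoke the elementary fact that $N\le A+BN^{1/p}$ with $A,B\ge0$ and $p>1$ forces $N\le1\vee(A+B)^{p/(p-1)}$: if $N\le1$ this is immediate; if $N>1$ then $N^{-1/p}<1$, so dividing by $N^{1/p}$ gives $N^{1-1/p}\le AN^{-1/p}+B\le A+B$, i.e. $N\le(A+B)^{p/(p-1)}$. Applying this with $A=\|m_0\|_{C_b}$, $B=CT^{\frac{d-p(1+d-\sigma)}{p\sigma}}\|b\|_{C_b}$, and noting that $\|m\|_{C_b}=N(T)$, finishes the proof.

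\textbf{Main obstacle.} The Young/interpolation estimate and the Gronwall/algebra step are routine; the delicate point is the rigorous justification of the Duhamel formula in Step 1 under the weak hypotheses (a bounded classical solution with merely $m\in C([0,T];L^1)$, and $b$ only in $C_b$): one must keep the derivative on the kernel, verify the $\tau\to0^+$ integrability of $\|\partial_{x_i}K_\sigma^*(\tau,\cdot)\|_{L^p}$ — again exactly the condition $p<p_0$ — and pass to the limit $\varepsilon\to0$ using the $L^1$-continuity of $t\mapsto m(t,\cdot)$ and the approximate-identity property of $K_\sigma^*$. Alternatively this can be carried out through the very weak formulation of Lemma \ref{lem:wf} and Remark \ref{rmk:bounded_testfunctions} with the (mollified) test function $K_\sigma^*(t-\cdot,x-\cdot)$.
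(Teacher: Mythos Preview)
Your proof is correct and follows essentially the same approach as the paper's: derive a Duhamel representation for $m$, apply the heat-kernel bound from \ref{L2}(ii) together with Young's inequality and the $L^{p'}$--$L^1$--$L^\infty$ interpolation, and close with the same algebraic step $N\le A+BN^{1/p}\Rightarrow N\le 1\vee(A+B)^{p'}$. The only cosmetic difference is that the paper obtains the Duhamel formula by duality (testing the Fokker--Planck equation against the backward heat kernel $\phi(s,x)=K(t-s,y-x)$ for $\mathcal L$, as in Bardi--Cirant), whereas you derive it directly from the forward semigroup $K_\sigma^*$; since $K_\sigma^*(t,x)=K_\sigma(t,-x)$ these yield the identical formula and estimates.
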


\begin{proof} (Inspired by \cite[Proposition 2.2]{bardi2018uniqueness}) For any $y\in\R^d$, let  $ \phi ( s,x ) = K ( t-s, y-x)$ where
  $K$ is the heat kernel of Section \ref{sec:fracHK}. Then $\phi\geq0$
is smooth, $\int_{\R^d} \phi(x,s) dx = 1$, and $\phi$
    solves the backward heat equation 
    \begin{align} \label{backward_heat}
     \begin{cases}
    - \partial_{t} \phi - \mathcal{L} \phi = 0, \quad s<t, \\
    \phi ( x,t ) = \delta_{y} ( x ),
     \end{cases}
    \end{align}
    where the $\delta$-measure $\delta_{y}$ has support in $y$.
 Multiply \eqref{Fokker_Planck}
by $\phi$, integrate in time and space, and integrate by parts to get
\begin{align*}
  &\int m\phi(x,t) dx- \int m\phi(x-y,0) dx= \int_0^t \int 
  m(x,s)[\phi_t+\mathcal{L}\phi-b\cdot D\phi](x-y,s)\, dx\, ds
\end{align*}
or
\begin{align*}
  & m(y,t)= m * K(\cdot,t)(y)+ \int_0^t \int 
    (bm)(\cdot,s)* DK(\cdot,t-s)\, dx\, ds.
  \end{align*}
Then by the heat kernel estimates of \ref{L2} (ii), $\| DK ( s,\cdot )  \|_{L^p} \leq
        C s^{\frac{d - p(1+d)}{p \sigma}}$, the Hölder and
        Young's inequalities, the properties of $K$, and $\| m(\cdot,t)\|_{L^{q}}^{q}\leq \|m\|_{C_b}^{q-1}\|m(\cdot,t)\|_{L^1}=\|m\|_{C_b}^{q-1}$,
    \begin{align*}
         |m (y,t )| &\leq \| m_{0}  \|_{C_b} + \| b  \|_{C_b}\int_{0}^{t} \|D K(\cdot,t-s)\|_{L^p} \|m(\cdot,s)\|_{L^{p'}} \,dt \\
        &\leq \| m_{0}  \|_{\infty} + Ct^{\frac{d - p(1+d) + p \sigma }{p \sigma}} \| b  \|_{C_b} \|m\|_{C_b}^{1-\frac1{p'}},
    \end{align*}
    for $1\leq p\leq \frac{d}{1+d-\sigma}$ where $\frac{1}{p} + \frac{1}{p'} = 1$. Since $y$ is arbitrary, we get after taking the supremum and dividing both
    sides by $\| m  \|_{C_b}^{\frac{1}{p}} $ that 
    \begin{align*}
    \| m  \|_{C_b} \leq 1 \vee \big[  \| m_{0}  \|_{C_b} + C T^{\frac{d - p(1+d- \sigma) }{p \sigma}}  \| b  \|_{C_b}   \big]^{p'} .
    \end{align*}
    This concludes the proof.
\end{proof}

Finally, we state the main result of this section, the existence of
classical solutions of \eqref{Fokker_Planck} that are positive and
mass-preserving. 

\begin{proposition}
   \label{fokker_planck_holder}
        Assume \ref{L1}, \ref{L2}, $b, Db, D^2 b\in C_b\big( ( 0,T ) \times
    \R^d \big)$, $0\leq m_0 \in C_b^2 (\R^d)$, and $\int_{\R^d}m_0\,dx=1$. 
    \medskip
    
    \noindent (a) There exists a
    unique classical solution $m$ of (\ref{Fokker_Planck})
    satisfying $m\geq0$, $\int_{\R^d}m(x,t)\,dx=1$ for
    $t\in[0,T]$, and 
  \begin{align*}
    \|m \|_{L^{\infty}} + \|D m \|_{L^{\infty}}  + \|D^2 m \|_{L^{\infty}}+\|\partial_{t} m \|_{L^{\infty}}  \leq c,
  \end{align*}
  where $c$ is a constant depending only on $\sigma$, $T$, $d$, and
  $\|D^kb\|_{\infty}$ for $k=0,1,2$.
    \medskip
    
    \noindent (b) There exists a modulus $\tilde{\omega}$ only
    depending on $\|D^k m  \|_{\infty}, \| D^kb  \|_{\infty}$ for
    $k=0,1,2$, and \ref{L1}, such that for $s,t\in[0,T]$ and $x,y\in\R^d$,
    \begin{align*}
        | m  ( t,x ) - m ( s,y ) | + |Dm ( t,x ) - Dm ( s,y ) | \leq \tilde\omega ( |t-s| + |x-y| ).
    \end{align*}
    
    \noindent (c) If in addition $b,  Db \in UC ( ( 0,T ) \times \mathbb{R}^d )$, then there exists a modulus $\omega$ only depending on
     $\tilde{\omega}$, $\omega_b$, $\omega_{Db}$, $\|Db\|_{\infty}$, $m_0$, $T$, $\sigma$, and $d$, such that for $s,t\in[0,T]$ and $x,y\in\R^d$,
     $$|\mathcal L^* m(x,t)-\mathcal L^* m(s,y)|+|\partial_t m(x,t)-\partial_t m(s,y)| \leq \omega(|s-t|+|x-y|).$$
\end{proposition}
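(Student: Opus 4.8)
Parts (a) and (b) will be established by the Duhamel-plus-Banach-fixed-point-plus-bootstrap scheme of Section~\ref{sec:fracHJB}, transplanted to the representation $m(t)=K^*(t)*m_0-\int_0^t DK^*(t-s)*(b(s)m(s))\,ds$ (obtained by moving the divergence in $\mathrm{div}(bm)$ onto the heat kernel): the $L^1$ bounds $\|D^\beta K^*(\tau)\|_{L^1}\leq C\tau^{-|\beta|/\sigma}$ from \ref{L2} make this map a contraction on short time intervals in spaces carrying the weighted norms $\vertiii{\cdot}_k$, gluing yields a global $C_b^{1,2}$ solution with the stated bounds, $m\geq 0$ comes from Proposition~\ref{fp_comparison_principle}, mass conservation from Proposition~\ref{prop:L1}(a), and the common space--time modulus $\tilde\omega$ of (b) follows from Lipschitz-in-space bounds (since $\|D^2m\|_\infty<\infty$) together with a mollification argument in time exactly as in the proof of Proposition~\ref{time_proposition}(b). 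I now concentrate on part (c), which is not of this type. From (a) the identity $\partial_t m=\mathcal{L}^*m-b\cdot Dm-(\mathrm{div}\,b)\,m$ holds pointwise; since $b,Db\in C_b\cap UC$ the field $\mathrm{div}\,b$ is bounded and uniformly continuous with modulus controlled by $\omega_{Db}$, and $m,Dm$ are uniformly continuous with modulus $\tilde\omega$ by (b). Hence it suffices to produce an explicit space--time modulus for $\mathcal{L}^*m$ and then combine moduli through the identity, using that a product of bounded uniformly continuous functions is uniformly continuous.

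The core step is the uniform continuity of $\mathcal{L}^*m$. For $r\in(0,1]$ I would split $\mathcal{L}^*m=I_r+J_r+K$ over the regions $|z|<r$, $r\leq|z|<1$, $|z|\geq 1$:
\begin{align*}
I_r(x,t)&=\int_{|z|<r}\big(m(x{+}z,t)-m(x,t)-Dm(x,t)\cdot z\big)\,d\mu^*(z),\\
J_r(x,t)&=\int_{r\leq|z|<1}\big(m(x{+}z,t)-m(x,t)-Dm(x,t)\cdot z\big)\,d\mu^*(z),\\
K(x,t)&=\int_{|z|\geq 1}\big(m(x{+}z,t)-m(x,t)\big)\,d\mu^*(z).
\end{align*}
For $I_r$ one uses the Taylor bound $|m(x{+}z,t)-m(x,t)-Dm(x,t)\cdot z|\leq\tfrac12\|D^2m\|_\infty|z|^2$ and $\int_{|z|<r}|z|^2\,d\mu^*\leq c\,r^{2-\sigma}$ (from \ref{L2} (i), i.e.\ \eqref{u-bnd}, valid also for $\mu^*$) to get $\|I_r\|_\infty\leq C\|D^2m\|_\infty r^{2-\sigma}$, hence $|I_r(x,t)-I_r(y,s)|\leq 2C\|D^2m\|_\infty r^{2-\sigma}$, a bound small for small $r$ and blind to the arguments. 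For $J_r$, rewriting the difference so the integrand is $[m(x{+}z,t)-m(y{+}z,s)]-[m(x,t)-m(y,s)]-[Dm(x,t)-Dm(y,s)]\cdot z$ and noting a translation by $z$ does not change the distance between two points, each of the three differences is $\leq\tilde\omega(|x-y|+|t-s|)$ (the last times $|z|\leq1$), so the integrand is $\leq 3\tilde\omega(|x-y|+|t-s|)$ on $r\leq|z|<1$, and $\int_{r\leq|z|<1}d\mu^*\leq c\,r^{-\sigma}$ (again \eqref{u-bnd}) gives $|J_r(x,t)-J_r(y,s)|\leq C\,r^{-\sigma}\,\tilde\omega(|x-y|+|t-s|)$. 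For $K$, the same rewriting and \ref{L1} (so $\mu^*(B_1^c)=\mu(B_1^c)<\infty$) give $|K(x,t)-K(y,s)|\leq 2\mu^*(B_1^c)\,\tilde\omega(|x-y|+|t-s|)$. Thus, for every $r\in(0,1]$,
\begin{equation*}
|\mathcal{L}^*m(x,t)-\mathcal{L}^*m(y,s)|\leq 2C\|D^2m\|_\infty r^{2-\sigma}+\big(C r^{-\sigma}+2\mu^*(B_1^c)\big)\tilde\omega(|x-y|+|t-s|),
\end{equation*}
and optimizing in $r$ — take $r=\tilde\omega(|x-y|+|t-s|)^{1/2}$ when this is $\leq1$, else use $2\|\mathcal{L}^*m\|_\infty$ — produces a modulus $\omega_{\mathcal{L}^*m}$, of size $\sim\tilde\omega(\cdot)^{1-\sigma/2}$ near $0$, depending only on $\tilde\omega$, $\|D^2m\|_\infty$, $\sigma$ and $\mu$.

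Finally, $b\cdot Dm$ and $(\mathrm{div}\,b)\,m$ are products of bounded uniformly continuous functions, hence uniformly continuous with moduli at most $\|Dm\|_\infty\omega_b+\|b\|_\infty\tilde\omega$ and $\|m\|_\infty\omega_{Db}+C_d\|Db\|_\infty\tilde\omega$; feeding these and $\omega_{\mathcal{L}^*m}$ into the pointwise identity for $\partial_t m$ gives uniform continuity of $\partial_t m$, and taking $\omega$ equal to the sum of all these moduli proves the asserted bound. I expect the only real obstacle to be the uniform continuity of $\mathcal{L}^*m$: because $\mathcal{L}^*$ has order $\sigma\in(1,2)$, the intermediate mass $\int_{r\leq|z|<1}d\mu^*$ diverges like $r^{-\sigma}$ as $r\to0$, so no single split controls $\mathcal{L}^*m(x,t)-\mathcal{L}^*m(y,s)$; the two-scale argument above — $r$ small to kill the argument-independent part $I_r$, then $|x-y|+|t-s|$ small to kill $J_r$ and $K$, with the balance $r\sim\tilde\omega^{1/2}$ — is what makes the estimate close, and this is precisely where \ref{L1} and \ref{L2} (i) are used.
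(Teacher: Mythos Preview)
Your treatment of (a) and (b) matches the paper's: Duhamel representation, Banach fixed point on short intervals with the weighted $\vertiii{\cdot}$ norms, global extension via the a priori $C_b$ and $L^1$ bounds from Propositions~\ref{fp_comparison_principle} and~\ref{prop:L1}, and the mollification-in-time argument of Proposition~\ref{time_proposition}(b).

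For (c) you take a genuinely different route. The paper does not argue directly on $\mathcal{L}^*m$; instead it writes $m=K^*(t)*m_0-\Phi(g)$ with $g=\mathrm{div}(mb)$, checks that $g,\nabla g\in C_b$ and $g\in UC$ (from (b) and the assumption $b,Db\in UC$), and then invokes Lemma~\ref{DH-lem}(b) with $K^*$ in place of $K$ to obtain moduli for $\mathcal{L}^*\Phi(g)$ and $\partial_t\Phi(g)$; the $K^*(t)*m_0$ piece is handled separately. Your approach bypasses the Duhamel machinery entirely: you split $\mathcal{L}^*m$ over $\{|z|<r\}$, $\{r\le|z|<1\}$, $\{|z|\ge1\}$, use the Taylor remainder and \eqref{u-bnd} on the inner shell, the modulus $\tilde\omega$ from (b) on the outer shells, and balance $r\sim\tilde\omega^{1/2}$ to close. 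This is correct and more elementary --- it needs only the pointwise bounds $\|D^2m\|_\infty<\infty$ and $m,Dm\in UC$ already available from (a)--(b), not the finer structure of the kernel convolution in Lemma~\ref{DH-lem}. What the paper's approach buys is an explicit H\"older-type modulus (powers of $|t-s|$ and $|x-y|$ with constants tied to $\|g\|_{C_{b,t}C^1_{b,x}}$), whereas yours yields $\tilde\omega^{1-\sigma/2}$, which is perfectly adequate for the compactness arguments downstream.
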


\begin{proof}
(a) The proof uses a Banach fixed point argument based on the Duhamel formula 
\begin{align}
    \label{duhamel_integral2}
      &m(t,x)=    \tilde\psi \left( m \right) \left( t,x \right) \\
      &:= K^* \left( t, \cdot \right) \ast m_0 \left( \cdot \right) \left( x \right) - \sum_{i=1}^d\int_0^t \partial_{x_i}K^* \left( t-s, \cdot \right) \ast ( b_im) \left(s,\cdot \right) ds,
\nonumber
\end{align}
and is similar to the proof of Theorem \ref{thm_HJ}. Here $K^*$ is the
heat kernel of $\mathcal L^*$. It is essentially a
corollary to Proposition 5.1 in \cite{droniou2003global} (but in our case the we have
more regular initial condition and hence no blowup of norms when $t\to0^+$).

Similar to the corresponding proof for the HJB equation,
we first show short-time $C^1$-regularity using the Duhamel formula.
Let $R_{0} = 1+  \| m_{0}  \|_{\infty} $, $R_{1} = (  2  +d \mathcal{K})
R_{0} + 1$, and the Banach (sub) space 
     \begin{align}
         X = \big\{ m: m, t^{1 / \sigma} Dm \in C_{b}\big( ( 0,T_{0} ) \times \mathbb{R}^{d} \big),  \ m\in C([0,T];
         L^1(\R^d) ),   \text{ and } \| m  \| \leq R_{1}\big\} , 
        \label{fp_banach_subspace}
    \end{align}
where $\| m  \| =  \|m\|_{C([0,T];
         L^1)}+ \| m  \|_{\infty} + \sum_{i=1}^{d} \| t^{1 / \sigma}
\partial_{x_{i}} m \|_{\infty} $. Then if $k ( \sigma )$ and $ \gamma
( \sigma )$ are defined in the proof of Proposition
\ref{duhamel_regularity} (a), we find from \eqref{duhamel_integral2}
that for $p\in\{1,\infty\}$, 
\begin{align*}
    \| \tilde{\psi} ( m ) ( t,x ) \|_{L^p} &\leq \| K^*  \|_{L^{1}} \| m_{0}  \|_{p}  + \sum_{i=1}^{d} \int_{0}^{t}  \| \partial_{x_{i}}   K^* ( t-s, \cdot ) \|_{L^{1}} \| b_{i}  \|_{\infty} \| m(s)  \|_{p}\,ds \\
    & \leq R_0  +  d k(\sigma) T_0^{1-\frac{1}{\sigma}} \|b   \|_{\infty} R_{1},
\end{align*}
and
\begin{align*}
  &|t^{1 / \sigma} \partial_{x_{j}} \tilde{\psi} ( m ) ( t,x ) | \\
  &\leq t^{1 / \sigma}\|\partial_{x_{j}} K^*  \|_{L^{1}} \| m_{0}  \|_{\infty}  + \sum_{i=1}^{d}  t^{1 / \sigma} \int_{0}^{t}  \|  \partial_{x_{i}}  K^* ( t-s, \cdot ) \|_{L^{1}} \| ( m \partial_{j} b_{i} + b_{i} \partial_{j} m )  \|_{\infty}  ds \\
    &\leq \mathcal{K} R_{0} + \sum_{i=1}^{d} t^{1 / \sigma}
  \int_{0}^{t} \mathcal{K} ( t-s )^{- 1 / \sigma} \Big[   \|  m
    \|_{\infty} \| \partial_{j} b_{i} \|_{\infty} + s^{-1 /
      \sigma}\|b_{i} \|_{\infty} \| s^{1 / \sigma}\partial_{j} m
    \|_{\infty} \Big] ds \\ 
    &\leq \mathcal{K} R_{0} + \Big[  k ( \sigma ) T_{0} \| D b \|_{\infty}  + \gamma ( \sigma ) T_{0}^{1 - 1 / \sigma} \| b \|_{\infty}  \Big]d R_{1},
\end{align*}
Computing the full norm, we get
\begin{align*}
    &\| \tilde{\psi} ( m )  \| \\
    &\leq ( 2+ d \mathcal{K} ) R_{0} + \underbrace{ \bigg[
      2 d
      k(\sigma) T_0^{1-\frac{1}{\sigma}} \|b   \|_{\infty}  +
      d^2 \Big[  k ( \sigma ) T_{0} \| D b  \|_{\infty}  + \gamma ( \sigma ) T_{0}^{1 - 1 / \sigma} \| b  \|_{\infty}  \Big] \bigg] R_{1}}_{:=c(T_{0})}.
\end{align*}
We take $T_{0}>0$ so small that $c ( T_{0} ) \leq 1 / 2$. Then it
follows  that $\tilde{\psi}$ maps $X$ into itself by the definition of
$R_{1}$. It is also a contraction since for $m_{1}, m_{2} \in X$, it
easily follows that
\begin{align*}
  &\| \tilde{\psi} ( m_{1} ) - \tilde{\psi} ( m_{2} ) \|
  \leq c ( T_{0} ) \| m_{1} - m_{2}  \|.
\end{align*}
An application of Banach's fixed point theorem in $X$ then concludes
the proof. Note that we only needed $m_{0} \in C_{b}$ and $b,Db \in
C_{b}$ to obtain the result.

We can now repeatedly differentiate
the Duhamel formula \eqref{duhamel_integral} and use similar
contraction arguments to conclude that if $b,Db,...,D^{k}b \in C_{b} ( (
0,T ) \times \mathbb{R}^d )$, then there exists a solution $m\in X$
such that
    \begin{align*}
         &D^2m, ..., D^{k-1} m, t^{\frac{1}{\sigma}} D^{k} m \in C_{b}
      ( ( 0,T_{0} ) \times \mathbb{R}^{d} )\quad \text{for\quad$T_{0} >0$ sufficiently small.}
    \end{align*}
    In a similar way as in Proposition \ref{time_proposition} (a) and
    Corollary \ref{PDE_soln} for the HJB-equation, it now follows
    that $m$ is a classical solution to \eqref{Fokker_Planck}. By Lemma
    \ref{fp_comparison_principle} and Lemma \ref{prop:L1} (a), we then
    have global in time bounds $m$ in $C_b\cap C([0,T];L^1)$. We
    can therefore extend the local existence and the derivative
    estimates to all of $[0,T]$. The argument is very similar to the
    proof in Section \ref{time_proposition} and we omit it. Finally,
    by Lemma \ref{prop:L1} (a) again, we get that $m\geq0$ and
    $\int_{\R^d}m(x,t)\,dx=1$.    
    \medskip
    
    \noindent (b) Part (b) follows in a similar way as part (b) in Theorem \ref{time_proposition}. 
    We omit the details.
    \medskip
     
    \noindent (c) From part (a), (b), and the assumptions, the
    function $g ( t,x ) = \text{div} ( m b )$ satisfies $g, \nabla g
    \in C_{b} ( ( 0,T ) \times \mathbb{R}^{d} )$ and $g \in UC ( ( 0,T
    ) \times \mathbb{R}^{d} )$. Lemma \ref{DH-lem} (b) (with $K^*$
    instead of $K$) then gives that
    $\partial_t \Phi(g),\mathcal{L}^{*} \Phi(g) \in UC ( ( 0,T ) \times
    \mathbb{R}^{d} )$ with modulus $\omega$ only dependent on
    $\sigma,T,d, \| g \|_{\infty} , \| \nabla g \|_{\infty}$ and $
    \omega_{g} $. A similar, but simpler argument shows that $\partial_t K_t^**m_0=\mathcal{L}^{*} K_t^**m_0 \in UC((0,T) \times
    \mathbb{R}^{d})$. Since $m=K_t^**m_0-\Phi(g)$, this concludes the proof. 
\end{proof}

\section{Existence for MFGs with nonlocal coupling -- Proof of Theorem \ref{existence_theorem}}
\label{sec:pf}
We adapt \cite{lionsCCDF,cardaliaguet2013mfg,ACDPS20} and use the
Schauder fixed point theorem. 
We work in  $C ( [ 0,T ], \mathbf{P}(\R^d ))$ with metric
$d(\mu, \nu ) = \sup_{t \in [ 0,T ]} d_0 ( \mu ( t ), \nu (t ) )$ and the subset 
\begin{align}
  \mathcal{C} := \left\{ \mu \in C ( [ 0,T ] , \mathbf{P} ( \R^d ) )
  : \sup_{t\in[0,T]}\int_{\R^d}\psi(|x|)\,\mu(dx,t)\leq C_{1}, \ \sup_{s \neq t} \frac{d_0 ( \mu ( s ) , \mu ( t ) )}{ |s-t|^{\frac{1}{\sigma}}} \leq C_2 \right\},
  \label{C_subset}
\end{align}
where $\psi$ is defined in Proposition \ref{prop:tail-control-function} and the
constants $C_1, C_{2} > 0$ are to be determined. 
For $\mu \in \mathcal{C}$, define $S ( \mu ) :=  m$ where $m$ is the classical solution of the fractional FPK equation
\begin{align}
   \left\{
\begin{aligned}
  \partial_t m - \mathcal{L}^{*} m - div \big( D_p H (x,u, Du )m \big) &= 0,   \\
    m (0, \cdot ) &= m_0 ( \cdot ),
\end{aligned}  
\right. 
\label{fokker-planck_existence}
\end{align}
and $u$ is the classical solution of the fractional HJB equation
\begin{align}
   \left\{
\begin{aligned}
  - \partial_t u - \mathcal{L} u + H (x,u, Du ) &= F (x,\mu ), \\
	 u (x,T ) &= G (x, \mu (T ) ).
\end{aligned}  
\right.
\label{hamilton-jacobi_existence}
\end{align}
Let $\mathcal{U} := \left\{ u: u \text{ solves
  (\ref{hamilton-jacobi_existence}) for } \mu \in \mathcal{C}
\right\}$ 
and
    $\mathcal{M} := \left\{ m: m \text{ solves (\ref{fokker-planck_existence}) for } u \in \mathcal{U} \right\}.$
\medskip

\noindent 1. ($\mathcal C$ convex, closed, compact). 
The subset
$\mathcal{C}$ is convex and closed in $C ( [ 0,T ] , \mathbf{P} (
\R^d ) )$ by standard arguments. 
It is compact by the Prokhorov and Arzèla-Ascoli
theorems. \medskip

\noindent 2. ($S:\mathcal C\to\mathcal C$ is well-defined).
By
\ref{L1}, \ref{L2}, \ref{A1}--\ref{A6}, 
Theorem \ref{thm_HJ} and \ref{HJ_Holder_thm}, 
there is a unique solution $u$ of 
(\ref{hamilton-jacobi_existence}) with
\begin{align} \label{U_bounds}
    & \|u\|_\infty ,\|Du\|_\infty,\cdots,\|D^3 u\|_\infty, \|\partial_t u\|_\infty \leq U_1, \\
\notag  &\partial_t u, u, Du, D^2 u, \mathcal L u \quad \text{equicontinuous with modulus } \omega ,
    \end{align}
where $U_1$ depends on $d, \sigma$ and the  spatial regularity of $F$,
$G$ and $H$. 
The modulus $\omega$ depends in addition 
on $C_2$ in \eqref{C_subset}. By the uniform bound in \ref{A2}, $U_1$
is independent of $\mu$.
By Proposition \ref{fokker_planck_holder} part (a)--(c), for any
$u\in\mathcal U$ 
there is a unique $m$ solving \eqref{fokker-planck_existence} such that
\begin{align} \label{M_bounds}
    & \|m\|_\infty ,\|Dm\|_\infty,\|D^2m\|_\infty, \|\partial_t m\|_\infty \leq M_1, \\
\notag    &\partial_t m, m, Dm, \mathcal L^* m\quad \text{are equicontinuous with modulus } \bar{\omega},
    \end{align}
where $M_1$ depends on $U_1$ and the local regularity of $H$ but not
on $\mu$. The modulus $\bar{\omega}$ depends in addition on $\omega$. By Lemma
\ref{prop:L1} (b)--(c),
\begin{align*}
     d_0 ( m ( s ), m ( t ) ) &\leq c_0 ( 1 + \|D_p H ( \cdot , Du ) \|_{\infty} ) |s-t|^{\frac{1}{\sigma}}, \\
   \int_{\R^d}m(x,t)\psi(|x|)\,dx &\leq
   \int_{\R^d}m_0\psi(|x|)\,dx \\
   \qquad+2\|\psi'\|_{C_b}+c&T\|\psi'\|_{C_b}\Big(\|D_{p} H ( \cdot, Du )\|_{C_b}+\int_{|z|<1}|z|^2d\mu(z)\Big)+T\int_{|z|>1}\psi(|z|)\,d\mu(z).
\end{align*}
By \eqref{U_bounds} and \ref{A3}, 
$ \| D_p H ( x , Du )  \|_\infty \leq \tilde{C}$, where $\tilde{C} $ 
is independent of $\mu$. Hence, we take 
$C_{1} = \int_{\R^d}m_0\psi(|x|)\,dx    
+2\|\psi'\|_{C_b}+cT\|\psi'\|_{C_b} \tilde{C} +\int_{|z|<1}|z|^2d\mu(z)\Big)+T\int_{|z|>1}\psi(|z|)\,d\mu(z) $, and
$ C_2 = c_0 ( 1+\tilde{C} )$ 
and get that $S$ maps $\mathcal{C}$ into itself. \medskip

\noindent 3. ($S$ is continuous). We use the well-known result:
\begin{lemma}  \label{Compact_convergence}
  Let $ ( X,d )$ a metric space, $K \subset \subset X$ a compact subset and $( x_n ) \subset K$ a sequence such that all convergent subsequences have the same limit $x^* \in K$. Then $x_n \rightarrow x^*$.
\end{lemma}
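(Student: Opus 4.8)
The plan is to argue by contradiction using sequential compactness of $K$. Suppose the conclusion fails, i.e. $x_n \not\to x^*$. Then by the definition of convergence in a metric space there is an $\varepsilon > 0$ and a subsequence $(x_{n_k})_k$ such that $d(x_{n_k}, x^*) \geq \varepsilon$ for every $k$.

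Next I would invoke compactness of $K$: since $(x_{n_k})_k \subset K$ and $K \subset\subset X$, the subsequence $(x_{n_k})_k$ itself has a convergent sub-subsequence $(x_{n_{k_j}})_j$ with some limit $y$, and $y \in K$ because $K$ is closed (compact subsets of a metric space are closed). This sub-subsequence is in particular a convergent subsequence of the original sequence $(x_n)$, so by the standing hypothesis its limit must be $x^*$; that is, $y = x^*$.

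On the other hand, passing to the limit $j \to \infty$ in the inequality $d(x_{n_{k_j}}, x^*) \geq \varepsilon$ and using continuity of the metric gives $d(y, x^*) \geq \varepsilon > 0$, which contradicts $y = x^*$. Hence the assumption $x_n \not\to x^*$ is untenable and $x_n \to x^*$, as claimed. There is no real obstacle here: this is the standard ``subsequence principle'' and the only point to be a little careful about is that a convergent sub-subsequence of $(x_n)$ is again one of the subsequences to which the hypothesis applies, so its limit is forced to be $x^*$.
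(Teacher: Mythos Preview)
Your argument is correct and is exactly the standard proof of this ``subsequence principle''. The paper does not actually prove this lemma at all; it simply introduces it as a well-known result and uses it, so there is nothing to compare against beyond noting that your contradiction argument via sequential compactness is the expected one.
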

\noindent Define
$X_1:=\{f : f,Df,D^2f,\partial_{t} f,\mathcal
    L f \in C_b\}$ and $X_2:=\{f : f,Df,\partial_{t} f,\mathcal
L^* f \in C_b\}$,  equipped with the metric
of local uniform convergence, taken at all the derivatives.
Then $X_1$ and  $X_2$ are complete metric spaces.
By \eqref{U_bounds}, \eqref{M_bounds},
Arzela-Ascoli, and a diagonal (covering) argument 
$\mathcal{U}$ and  $\mathcal{M}$ are compact in $X_1$ and $X_2$, respectively. 

Let $ \mu_{n} \to \mu \in \mathcal{C}$,
and let $ ( u_{n}, m_{n} )$ be the corresponding solutions of
\eqref{hamilton-jacobi_existence} and \eqref{fokker-planck_existence}. 
Take a convergent subsequence $ ( u_{n} ) \supset u_{n_{k}} \to
\tilde{u} \in \mathcal{U}$ and let $\mathcal{L} = \mathcal{L}_1 +
\mathcal{L}^1 =  
\big(\int_{|z|< 1} + \int_{|z| \geq 1 } \big) ( \ldots )$. By uniform
convergence $\mathcal{L}_{1 } u_{n_{k}} (t,x  ) \to
\mathcal{L}_{1 } \tilde{u} (t,x)$, and by dominated convergence 
$\mathcal{L}^{1 } u_{n_{k}} (t,x  ) \to \mathcal{L}^{1 } \tilde{u} (t,x  ) $.
By \ref{A1}, \ref{A3} and for any $(t,x) \in ( 0,T ) \times \R^d $:
\begin{align*}
    &\big| - \partial_t \tilde{u} ( t,x ) - \mathcal{L} \tilde{u} ( t,x ) + H (x, D\tilde{u} ( t,x ) ) - F (x,\mu ) \big| \\
    &\leq \big| \partial_t u_{n_k} ( t,x ) - \partial_t \tilde{u}( t,x ) \big| + \big| \mathcal{L} u_{n_k} ( t,x ) - \mathcal{L} \tilde{u} ( t,x ) \big| \\ 
    &+ \big|H ( x, Du_{n_k} ) - H ( x, D \tilde{u} ) \big| + \big| F ( x, \mu_{n_k} ( t ) ) - F ( x, \mu ( t ) ) \big|   \\
    &\rightarrow 0,
\end{align*}
and $\big|\tilde{u} ( T,x ) - G ( x, \mu ( T ) ) \big| \leq | \tilde{u} ( T,x ) - u_{n_{k}} ( T,x )  |   + \big| G ( x, \mu_{n_k} ( T )
) - G ( x, \mu ( T ) ) \big| \rightarrow 0$.  
This shows that
$\tilde{u}$ solves \eqref{hamilton-jacobi_existence} with $\mu$ as input.
By uniqueness of the HJB equation, compactness of $\mathcal{U}$ in $X_{1}$, and Lemma \ref{Compact_convergence}, 
we conclude that $u_n \rightarrow u$ in $X_1$.

A similar argument shows that $m_n \to m \in  X_{2}$.
By compactness of $\mathcal C$ in part 2, uniqueness of
solutions, and Lemma \ref{Compact_convergence}, we also find that
 $m_n \rightarrow m$ in $C ( [ 0,T ] , \mathbf{P} ( \R^d ) )$. The map
$S:\mathcal C\to\mathcal C$ is therefore continuous. \medskip

 \noindent 4. (Fixed point).
 By Schauder fixed point theorem there then exists a fixed point $S ( m ) = m$,
 and this fixed point is a classical solution of
 \eqref{frac_MFG_system} and the proof of Theorem \ref{existence_theorem} is complete.

\section{Existence for MFGs with local coupling -- Proof of Theorem
  \ref{local_theorem}}
\label{sec:loc}

\noindent 1. (Approximation) We follow Lions
\cite{lionsCCDF,cardaliaguet2013mfg}, approximating by
a system with non-local coupling and passing to the limit.
Let $\epsilon>0$, $0 \leq \phi \in C_c^{\infty}$ with $\int_{\R^d}
\phi = 1$, $\phi_{\epsilon} := \frac{1}{\epsilon^d} \phi (
x/ \epsilon )$, and for $\mu \in P(\R^d)$ let
$F_{\epsilon} ( x, \mu ) := f ( x, \mu \ast \phi_{\epsilon} ( x )
).$
For each fixed $\epsilon >0$, $F_{\epsilon}$ is a nonlocal coupling
function satisfying \ref{A1}--\ref{A2}, since $\|D^\beta(\mu \ast
\phi_{\epsilon}) \|_{\infty} \leq \|\mu\|_1
\|D^\beta\phi_{\epsilon}\|_{\infty}=\|D^\beta\phi_{\epsilon}\|_{\infty}$. 
Assumptions \ref{L1}--\ref{L2}, \ref{A1}--\ref{A6} then hold for the approximate system
\begin{align}
  \begin{split}
    \left\{ \begin{array}{ll}
        -\partial_t u_{\epsilon} - \mathcal{L} u_{\epsilon} + H ( x,Du_{\epsilon} ) = F_{\epsilon} ( x, m_{\epsilon} ( t) ) & \text{ in } ( 0,T ) \times \R^d, \medskip \\
      \partial_t m_{\epsilon} - \mathcal{L}^{*} m_{\epsilon} - div ( m_{\epsilon} D_p H ( x, Du_{\epsilon}  ) ) = 0 & \text{ in } ( 0,T ) \times \R^d, \medskip \\
      m ( 0 ) = m_0 ,\qquad u ( x,T ) = g ( x ),
    \end{array}
    \right.
  \end{split}
  \label{loc_MFG_system_reg}
\end{align}
and by Theorem \ref{existence_theorem} there exist a classical solution $( u_{\epsilon}, m_{\epsilon} )$ of this system.
\medskip

\noindent 2. (Uniform bounds)
Since either \ref{A3'} or \ref{A2''} holds,
$F_\epsilon(x,m_\epsilon(t))$ is uniformly bounded in
$\epsilon$. In the case of \ref{A3'} this follows from Lemma
\ref{L-infinity_estimate} and the estimate
\begin{align} \label{meps_bound}
   \| m_{\epsilon}  \|_{C_b} \leq 1 \vee \Big[  \| m_{0}  \|_{C_b} + C
       T^{\frac{d - p(1+d- \sigma) }{p \sigma}} \| D_{p} H ( \cdot, Du_{\epsilon} )  \|_\infty
     \Big]^{\frac p{p-1}} \leq K
\end{align}
for $K$ independent of $\epsilon$.
By Theorem
\ref{existence_visc_sol} (b) and \ref{A3} we then have
\begin{align} \label{ueps_bound}
    \|u_{\epsilon}\|_\infty &\leq \|g\|_\infty 
    + ( T-t ) (  \|F_{\epsilon} ( \cdot, m_{\epsilon} ( t ) ) \|_\infty 
    + \| H ( \cdot,0 )  \|_\infty ) \leq \tilde{K}
\end{align}
for $\tilde{K} >0$ independent of $\epsilon$, and since
$F_{\epsilon}$ is also continuous, 
by Theorem \ref{IshiiLions}
\begin{align} \label{Dueps_bound}
\|Du_{\epsilon}\|_{\infty} \leq C
\end{align}
 for $C \geq 0$ independent of
$\epsilon$ ($C$ depends on  $F_\epsilon$ only through its
 $C_b$-norm). Under \ref{A3'}  $m$ is bounded and satisfies
\eqref{meps_bound}, and this is still true if \ref{A3'} is replaced by
\ref{A2''} in view of the uniform bound on $Du_\epsilon$ in \eqref{Dueps_bound}.
\medskip

\noindent 3. (Improvement of regularity) 
The Duhamel formulas for $m_{\epsilon}$ and 
$Du_{\epsilon}$ are given by
\begin{align}
    m_{\epsilon} ( t ) &= K^*_{\sigma} ( t ) \ast m_0 - \sum_{i=1}^d \int_0^t \partial_i K^*_{\sigma} ( t-s ) \ast m_{\epsilon} [ D_p H( \cdot ,Du_{\epsilon} ( s ) )]_i ds,
    \label{m_eqn}\\
    Du_{\epsilon} ( t ) &= K_{\sigma} ( t ) \ast Du_0 - \int_0^t D_x K_{\sigma} ( t-s ) \ast ( H ( \cdot,Du_{\epsilon} ( s ) ) - F_{\epsilon} ( \cdot, m_{\epsilon} ( s, \cdot ) ) ) ds.
    \label{du_eqn}
\end{align}
where $K_{\sigma} ( t ) = K_{\sigma} ( t,x )$ and $K^*_{\sigma} ( t )
= K^*_{\sigma} ( t,x )$ are the fractional heat kernels in $\R^d$
corresponding to $\mathcal{L}$ and $\mathcal{L}^*$. Fractional
differentiations of these will lead to improved regularity. 

Assume that for $k \in \{ 0,1,2\}$ and $\alpha \in [ 0,1 )$,
there is $C \geq 0$ independent of $\epsilon$ such that for all $t\in[0,T]$,
\begin{align} 
    \|m_{\epsilon}(t) \|_{C^{k,\alpha} ( \R^d ) } + \|Du_{\epsilon} (t)\|_{C^{k,\alpha} ( \R^d ) } \leq C.
    \label{starting_point}
\end{align}
We will show that for any $\delta\in(0,\alpha)$ and $s\in
(0,\sigma-1)$ there is $\tilde
C\geq0$ independent of $\epsilon$ and $t$ such that
\begin{align}\label{iteration}
  \begin{cases}
     \|m_{\epsilon}(t) \|_{C^{k,s+\alpha-\delta} ( \R^d ) } +
     \|Du_{\epsilon}(t) \|_{C^{k,s+\alpha-\delta} ( \R^d ) } \leq
     \tilde C, & \text{for } s + \alpha -  \delta \leq 1,\\
     \|m_{\epsilon}(t) \|_{C^{k+1,s+\alpha-\delta-1} ( \R^d ) } +
     \|Du_{\epsilon}(t) \|_{C^{k+1,s+\alpha-\delta-1} ( \R^d ) } \leq
     \tilde C, & \text{for } s + \alpha -  \delta > 1, 
  \end{cases}
  \end{align}

Assume first $\alpha \in ( 0,1 )$ and consider the $m_\epsilon$-estimate. When
\eqref{starting_point} holds, then $m_{\epsilon} D_p H
(x,Du_{\epsilon}  ) \in C^{k,\alpha} ( \R^d 
)$ by the chain rule and \ref{A3}, and $|D|^{\alpha-\delta} D^k
\big( m_{\epsilon} D_p H ( x,Du_{\epsilon}  ) \big) \in
C_{b}^{0,\delta} ( \R^d )$ for $ \delta \in ( 0, \alpha )$ by \cite[Proposition
  2.7]{silvestre2007regularity}.
Let $s \in ( 0,\sigma -1 )$ and 
apply $|D|^{s} |D|^{\alpha-\delta} D^k$ 
to \eqref{m_eqn},
\begin{align*}
    |D|^{s} |D|^{\alpha-\delta} D^k m_{\epsilon} &= K_\sigma^* ( t ) \ast |D|^{s + \alpha - \delta} D^k m_0 \\
                                                 &- \sum_{i=1}^{d} \int_0^t |D|^{s} DK_\sigma^* ( t-s ) \ast |D|^{\alpha-\delta} D^k \big[   m_{\epsilon} D_p H (   \cdot, Du_{\epsilon} ) \big]_{i} ds.
\end{align*}
By Young's inequality and Proposition \ref{kernel_estimates} (heat
kernel estimates), 
\begin{align*}
   \| |D|^{s + \alpha - \delta} D^k m_{\epsilon} \|_{\infty} \leq  \| |D|^{ s +\alpha- \delta}D^k m_0 \|_{\infty} 
   +c \frac{T^{1-\frac{1+s}{\sigma}} }{1-\frac{1+s}{\sigma}} \||D|^{\alpha-\delta}  D^k ( m_{\epsilon} D_p H ( \cdot , Du_{\epsilon} ) ) \|_{\infty} ,
\end{align*}
and taking $\delta<\alpha/2$, we get uniform in $\epsilon$ Hölder estimates by \cite[Proposition
  2.9]{silvestre2007regularity},  
\begin{align*}
    m_{\epsilon} ( t ) \in \begin{cases}
     C_{b}^{k, s + \alpha -  2\delta} ( \R^d ),  &\text{for } s + \alpha - 2 \delta \leq 1, \\
     C_{b}^{k+1, s + \alpha -  2\delta-1} ( \R^d ), & \text{for } s + \alpha - 2\delta > 1.
 \end{cases}
\end{align*}
The case $\alpha = 0$ follows in a similar but more direct way differentiating
\eqref{m_eqn} by $| D |^{s} D^{k} $ instead of $|D|^{s}
|D|^{\alpha-\delta} D^k$ as above. The estimates on $Du_{\epsilon}$ follow similarly.
\medskip

\noindent 4. (Iteration and compactness)
Starting from \eqref{meps_bound}, \eqref{ueps_bound}, and \ref{A2'}
and \ref{A3}, we iterate using \eqref{iteration} to find that 
$$\|u_{\epsilon} ( t ) \|_{C_b^3 (\R^d)} + \|m_{\epsilon} ( t )
\|_{C_b^2 (\R^d)} \leq C$$
independent of $\epsilon$ and $t\in[0,T]$. 
By Proposition \ref{time_proposition} and 
Proposition \ref{fokker_planck_holder}, we then find that
\begin{align*} 
  &  \|\partial_t u_{\epsilon}\|_{\infty} \leq U \quad\text{and}\quad
  \partial_t u_{\epsilon}, u_{\epsilon}, Du_{\epsilon}, D^2
  u_{\epsilon}, \mathcal L u_{\epsilon} \quad \text{equicontinuous
    with modulus } \omega , \\
  &  \|\partial_t m_{\epsilon}\|_{\infty} \leq M  \quad\text{and}\quad
\partial_t m_{\epsilon}, m_{\epsilon}, Dm_{\epsilon}, \mathcal L^* m_{\epsilon}\quad \text{equicontinuous with modulus } \bar{\omega},
    \end{align*}
where $U$, $\omega$, $M$ and $\bar{\omega} $ are independent of  $\epsilon$.
As in the proof of Theorem \ref{existence_theorem},
these bounds imply compactness of
$(m_{\epsilon}, u_{\epsilon})$ in $X_1\times X_2$ (see below Lemma
\ref{Compact_convergence} for 
the definitions).
\medskip

\noindent 5. (Passing to the limit) We extract a convergent subsequence, 
$( u_{\epsilon_k}, m_{\epsilon_k} )
\rightarrow ( u, m )$ in $X_1\times X_2$. By a direct calculation the
limit $( u,m )$ solves equation (\ref{loc_MFG_system}). This concludes
the proof of Theorem \ref{local_theorem}.

\appendix

\section{Uniqueness of solutions of MFGs -- Proof of Theorem
  \ref{theorem:uniqueness}} \label{sec:pf_uniq}
The proof of uniqueness 
is essentially the same as the proof in the College de
France lectures of P.-L. Lions \cite{lionsCCDF,cardaliaguet2013mfg}.
Let $\left( u_1, m_1 \right)$ and $\left( u_2, m_2 \right)$ be 
two classical solutions, and set 
$\tilde{u} = u_1 - u_2$ and $\tilde{m} = m_1 - m_2$. 
By (\ref{frac_MFG_system}) and integration by parts,

  \begin{align*}
    \frac{d}{dt} \int_{\R^d} \tilde{u} \tilde{m} \,dx &=  \int_{\R^d} \frac{\partial}{\partial t} \left( \tilde{u} \tilde{m} \right) \,dx = \int_{\R^d} \left( \partial_t \tilde{u} \right) \tilde{m} + \tilde{u} \left( \partial_t \tilde{m} \right) \,dx \\
    &= \int_{\R^d} \bigg[ \Big(  - \mathcal{L} \tilde{u} + H \left( x, Du_1 \right) - H \left( x, Du_2 \right) - F \left( x, m_1 \right) + F \left( x, m_2 \right) \Big) \tilde{m} \\ 
    & \ \ \ \ \ \ \ \ \quad + \tilde{u} \mathcal{L}^* \tilde{m} - \langle D \tilde{u}, m_1 D_p H \left( x, Du_1 \right) - m_2 D_p H \left( x, Du_2 \right) \rangle \bigg] \,dx.
  \end{align*}
By the definition of the adjoint,
 $ \int_{\R^d}  \left( \mathcal{L} \tilde{u} \right) \tilde{m} - \tilde{u} \left(  \mathcal{L}^* \tilde{m} \right) \,dx = 0,$
and from \ref{A7} we get
\begin{align*}
    \int_{\R^d} \left( -F \left( x, m_1 \right) + F \left( x, m_2 \right) \right) d \left( m_1 -m_2 \right) \left( x \right) &\geq 0 \qquad \forall m_1,m_2 \in P ( \R^d ).
\end{align*}
For the remaining terms on the right hand side, we use a Taylor expansion and \ref{A8},
\begin{align*}
    &\int_{\R^d} \bigg[- m_1 \Big( H \left( x,Du_1 \right) - H \left( x,Du_2 \right) - \langle D_p H \left( x,Du_1 \right), Du_2 - Du_1 \rangle \Big) \\ 
    &\qquad\ \,- m_2 \Big( H \left( x,Du_2 \right) - H \left( x,Du_1 \right) - \langle D_p H \left( x,Du_2 \right), Du_1 - Du_2 \rangle \Big) \bigg] \,dx \\
    &\leq - \int_{\R^d} \frac{m_1 + m_2}{2C} |Du_2 - Du_1|^2 \,dx.
\end{align*}
Integrating from $0$ to $T$, using the fact that $\tilde{m} \left( t=0 \right) = 0$ and $\tilde{u} \left( t=T \right) = G \left( x,m_1 \left( T \right) \right) - G \left( x,m_2 \left( T \right) \right)$,
\begin{align*}
    \int_0^T \frac{d}{dt} \int_{\R^d} \tilde{u} \tilde{m} \,dx \,dt =  \int_{\R^d} \left( G \left( x,m_1 \left( T \right) \right) - G \left( x,m_2 \left( T \right) \right) \right) \left( m_1 \left( x,T \right) - m_2 \left( x,T \right) \right) \,dx \geq 0,
\end{align*}
where we used \ref{A7} again. Combining all the estimates we find that
\begin{align*}
  0 \leq - \int_0^T \int_{\R^d} \frac{m_1+m_2}{2C} |Du_1 - Du_2|^2 \,dx \,dt
\end{align*}
Hence since the integrand is nonnegative it must be zero and $Du_1 =
Du_2$ on the set $\left\{ m_1 > 0 \right\} \cup \left\{ m_2 > 0
\right\}$. This means that $m_1$ and $m_2$ solve the same equation
(the divergence terms are the same) and hence are equal by uniqueness.
Then also $u_1$ and $u_2$ solve the same equation and $u_1=u_2$ by standard uniqueness for nonlocal HJB equations (see e.g. \cite{jakobsen2005continuous}).
The proof is complete.

\section{Proof of Lemma \ref{DH-lem}}\label{pf-DH-lem}

\noindent a) \ The proof is exactly the same as in \cite{imbert2005non}. The
difference is that $f$ only needs to be $C^1$ in space, since $D_x K$
is integrable in $t$.
\medskip

\noindent b) \ {\em Part 1:} Uniform continuity in $x$ for $\mathcal L
\Phi ( f )$ and $\partial_{t} \Phi ( f )$. By the definition of $\mathcal L$,
\begin{align*}
    &\mathcal{L} [ \Phi ( f ) ] ( t,x ) = \int_{0}^t \mathcal{L} K ( t-s, \cdot ) \ast f(s, \cdot) ( x ) ds \\
    &= \int_{0}^t \int_{\mathbb{R}^d} \Big[\int_{\mathbb{R}^{d}} K ( t-s,
  y+z ) - K ( t-s, y ) - \nabla_{x} K ( t-s,y ) \cdot z 1_{|z|<1} d
  \mu ( z )\Big] f ( s, x - y ) dy ds \\
  &= \int_{0}^t \int_{\mathbb{R}^d}\int_{|z|<1} \Big(\cdots\Big) + \int_{0}^t \int_{\mathbb{R}^d}\int_{|z|>1} \Big(\cdots\Big)=:I_1(t,x)+I_2(t,x).
\end{align*}
After a change of variables and $\|K(t,\cdot)\|_{L^1}=1$,
\begin{align*}
   |I_2(t,x_1)-I_2(t,x_2)|&\leq \int_{0}^{t} \int_{|z| \geq 1} \int_{\mathbb{R}^{d}} K ( t-s, y )
  \Big[ f ( s,x_{1} -y +z ) -  f ( s,x_{1} -y )\\
    &\qquad\qquad\qquad\quad - f ( s,x_{2} -y +z ) + f ( s,x_{2} -y )\Big] dy d \mu ( z ) ds \\
    & \leq 2 t\|f \|_{C_{b,t}C_{b,x}^{1}} |x_{1} - x_{2} | \int_{|z| \geq 1} d \mu ( z ).
\end{align*}
Then since  and $\|I_2(t,\cdot)\|_{C_b}\leq
2t\|f\|_{C_{b,t}C_{b,x}^{1}}\int_{|z|\geq1}d\mu(z)$, 
\begin{align*}
   |I_2(t,x_1)-I_2(t,x_2)|  &\leq
   (2\|I_2(t,\cdot)\|_{C_b})^\beta|I_2(t,x_2)-I_2(t,x_2)|^{1-\beta}\\
   &\leq 4 t\|f \|_{C_{b,t}C_{b,x}^{1}} \int_{|z| \geq 1} d \mu ( z )|x_{1} - x_{2} |^{1-\beta}.
\end{align*}

By the fundamental theorem, Fubini, and a change of variables,
\begin{align*}
    I_1(t,x) &= \int_{0}^t \int_{|z| < 1 } \Big[\int_{\mathbb{R}^{d}}
      \int_{0}^{1} \nabla_{x} K ( t-s, y + \sigma z )- \nabla_{x} K (
      t-s,y )\Big]\cdot z f ( s,x-y ) d \sigma dy d \mu ( z ) ds, \\
    &= \int_{0}^{t} \int_{0}^{1} \int_{\mathbb{R}^{d}} \int_{|z|<1} \nabla_{x} K ( t-s, y ) \cdot z \Big[ f ( s,x-y+ \sigma z ) - f ( s,x-y ) \Big] d \mu ( z ) dy d \sigma ds.
\end{align*}
It follows that
\begin{align*}
    I_1(t,x_1)-I_1(t,x_2) &= \int_{0}^{t} \int_{0}^{1} \int_{\mathbb{R}^{d}}\nabla_{x} K ( t-s, y ) \cdot  \int_{|z|<1} z \Big[ f ( s,x_{1}-y+ \sigma z )\\&\qquad- f ( s,x_{2}-y+ \sigma z ) - \big(  f ( s,x_{1}-y ) - f ( s,x_{2}-y )\big)\Big] d \mu ( z ) dy d \sigma ds.
\end{align*}
Since
\begin{align*}
    &|f ( x_{1} + \sigma z ) - f ( x_{1} ) - f ( x_{2} + \sigma z ) - f ( x_{2} ) |^{1-\beta+\beta} \leq 2 \| f \|_{C_{b,t}C_{b,x}^{1}}^{1-\beta}|x_{1} - x_{2}|^{1-\beta}
  \|f\|_{C_{b,t}C_{b,x}^{1}}^{\beta} |\sigma z|^{\beta},
\end{align*}
we see by Theorem \ref{L_heat_kernel_estimate}
and \ref{L1} that
\begin{align*}
  &|I_1(t,x_1)-I_1(t,x_2)| \\ & \leq \int_{0}^{t} \int_{\mathbb{R}^{d}} |\nabla_{x} K ( t-s,y ) | dy ds\  2 \| f \|_{C_{b,t}C_{b,x}^{1}}^{1-\beta}|x_{1} - x_{2}|^{1-\beta} \|f\|_{C_{b,t}C_{b,x}^{1}}^{\beta} \int_{|z|<1}|z|^{\beta+1} d \mu ( z ) \\
    &  \leq \mathcal{K} \tfrac\sigma{\sigma-1}T^{\frac{\sigma-1}{\sigma}} \int_{|z|<1} |z|^{\beta+1} d \mu ( z ) \|f\|_{C_{b,t}C_{b,x}^{1}} |x_{1} - x_{2}|^{1-\beta}.
\end{align*}

Combining the above two estimates, we conclude that
\begin{align*}
    |\mathcal{L} [ \Phi ( f ) ] ( t,x_{1} ) - \mathcal{L} [ \Phi ( f ) ] ( t,x_{2} ) | \leq c \| f  \|_{C_{b,t}C_{b,x}^{1}} |x_{1}-x_{2}|^{1-\beta},
\end{align*}
with $c=\frac{\sigma}{\sigma-1}T^{\frac{\sigma-1}\sigma}\mathcal
K\int_{|z|<1}|z|^{1+\beta}d\mu(z)+4T\int_{|z| \geq 1} d \mu ( z )$. By part a),
$\partial_{t} \Phi ( f ) ( t,x )= f ( t,x )+ \mathcal{L} [ \Phi ( f )
] ( t,x )$. Since
$$|f ( t,x ) - f ( t,y ) | \leq (2\|f\|_{C_b})^{\beta}|f (
t,x ) - f ( t,y ) |^{1-\beta}\leq 2\| f  \|_{C_{b,t}C_{b,x}^{1}}
|x-y|^{1-\beta},$$ we then also get that
\begin{align*}
    | \partial_{t} \Phi [ f ] ( t,x_{1} ) - \partial_{t} \Phi [ f ] ( t,x_{2} ) | \leq (2+c)\|f\|_{C_{b,t}C_{b,x}^{1}} |x_{1}-x_{2}|^{1-\beta}.
\end{align*}

\noindent b) \ {\em Part 2:} Uniform continuity in time. First note that
\begin{align*}
    &\mathcal{L}  \Phi [ f ] ( t,x )  - \mathcal{L}  \Phi [ f ] ( s,x )  = \int_{0}^t \mathcal{L} K ( \tau, \cdot ) \ast f ( t - \tau, \cdot ) d \tau - \int_{0}^{s} \mathcal{L} K ( \tau, \cdot ) \ast f ( s-\tau, \cdot ) d \tau \\
    &= \int_{0}^{s} \mathcal{L} K ( \tau, \cdot ) \ast \big( f (  t-\tau, \cdot ) -  f ( s-\tau, \cdot )\big) d \tau + \int_{s}^{t} \mathcal{L} K ( \tau, \cdot ) \ast f ( t - \tau, \cdot ) d \tau.
\end{align*}
Now we do as
before: Split the $z$-domain in two parts, use the fundamental theorem
and a change of variables to get
\begin{align*}
& \mathcal{L} K ( \tau, \cdot ) \ast \big(f (  t-\tau, \cdot ) -  f ( s-\tau, \cdot )\big)  \\[0.2cm]
&=  \int_{0}^{1} \int_{\mathbb{R}^{d}} \int_{|z|<1} \nabla_{x} K ( \tau, x-y ) \cdot z \big[     f ( t- \tau,y+ \sigma z ) - f ( t - \tau ,y )  \\[0.2cm] 
& \qquad \qquad \qquad \qquad \qquad - f ( s- \tau, y+\sigma z ) + f ( s-\tau,y ) \big] d \mu ( z ) dy d \sigma . \\
& +  \int_{\mathbb{R}^{d}} \int_{|z| \geq 1} K ( \tau, x-y ) [ f ( t- \tau, y + z ) - f ( t- \tau, y ) \\
& \qquad \qquad \qquad \qquad \qquad - f ( s-\tau, y+z ) + f ( s- \tau , y)] d \mu ( z ) dy. 
\end{align*}
Then we apply the trick
\begin{align*}
&| f ( t- \tau,y+ \sigma z ) - f ( t - \tau ,y ) - f ( s- \tau,y+
  \sigma z ) + f ( s - \tau ,y )|\\
  &\leq  2\omega_{f} ( |t-s| )^{1-\beta}
  (\|f\|_{C_{b,t}C^1_{b,x}}|z|)^{\beta}\quad\text{or}\quad 4 \omega_{f} ( |t-s| )^{1-\beta}
  \|f\|_{C_{b}}^{\beta},
\end{align*}
and find using Theorem \ref{L_heat_kernel_estimate}
and \ref{L1} that
\begin{align*}
  &\Big|\int_{0}^{s} \mathcal{L} K ( \tau, \cdot ) \ast \big( f (  t-\tau, \cdot ) -  f ( s-\tau, \cdot )\big) d \tau \Big|\\
  &\leq \Big[\frac{\sigma}{\sigma-1}s^{\frac{\sigma-1}\sigma}\mathcal K\int_{|z|<1}|z|^{1+\beta}d\mu(z)+4s\int_{|z| \geq 1} d \mu ( z
  )\Big]\|f\|_{C_{b,t}C^1_{b,x}}^\beta\omega_{f} ( |t-s| )^{1-\beta}.
\end{align*}
In a similar way we find that
\begin{align*}
&\Big|\int_{s}^{t} \mathcal{L} K ( \tau, \cdot ) \ast f ( t - \tau, \cdot )
d \tau\Big|\\
&\leq \Big[2\frac{\sigma}{\sigma-1}(t^{\frac{\sigma-1}\sigma}-s^{\frac{\sigma-1}\sigma})\mathcal K\int_{|z|<1}|z|^{1+\beta}d\mu(z)+2(t-s)\int_{|z| \geq 1} d \mu ( z
  )\Big]\|f\|_{C_{b}}\\
&\leq c_1\|f\|_{C_{b}}|t-s|^{\frac{\sigma-1}\sigma}.
\end{align*}

Combining all above estimates leads to
\begin{align*}
&\Big|\mathcal{L}  \Phi [ f ] ( t,x )  - \mathcal{L}  \Phi [ f ] ( s,x
  ) \Big| \leq c  \|f\|_{C_{b,t}C^1_{b,x}}^\beta\omega_{f} ( |t-s|
  )^{1-\beta}+\tilde c\|f\|_{C_{b}}|t-s|^{\frac{\sigma-1}\sigma},
\end{align*}
where $c$ is defined above and in the Lemma and
$$\tilde c=2\frac{\sigma}{\sigma-1}\mathcal K\int_{|z|<1}|z|^{1+\beta}d\mu(z)\underset{{s,t\in[0,T]}}{\max}\tfrac{\big|t^{\frac{\sigma-1}\sigma}-s^{\frac{\sigma-1}\sigma}\big|}{|t-s|^{\frac{\sigma-1}\sigma}}+2T^{\frac1\sigma}\int_{|z| \geq 1} d \mu ( z
  ).$$
Note that $\tilde c$ is finite. Then since
\begin{align*}
    \partial_{t} \Phi [ f ] ( t,x ) - \partial_{t} \Phi [ f ] ( s,x ) = f ( t,x ) - f ( s,x ) + \mathcal{L}  \Phi [ f ] ( t,x )  - \mathcal{L}  \Phi [ f ] ( s,x ),
\end{align*}
and $|f ( t,x ) - f ( s,x ) | \leq (2\|f\|_{C_b})^{\beta}\omega_{f} (
|t-s| )^{1-\beta}$, the continuity estimate for $\partial_{t} \Phi [ f
]$ follows.
\medskip

\noindent c) The proof follows by writing
\begin{align*}
    \partial_{x_{i}} \Phi ( g ) ( t,x ) = \int_{0}^{t} \partial_{x_{i}} K ( \tau,z ) g ( t-\tau,x-z ) dz d \tau,
\end{align*}
and then directly compute the difference $| \partial_{x_{i}} \Phi ( g ) ( t,x ) - \partial_{x_{i}} \Phi ( g ) ( s,y ) |$.
\medskip

\noindent The proof is complete.


\bibliographystyle{plain}
\end{document}